\DeclareMathAlphabet{\matheurm}{U}{eur}{m}{n}
\DeclareMathOperator{\ANR}{ANR}
\DeclareMathOperator{\CAT}{CAT}
\DeclareMathOperator{\cd}{cd}
\DeclareMathOperator{\colim}{colim}
\DeclareMathOperator{\dom}{dom}
\DeclareMathOperator{\id}{id}
\DeclareMathOperator{\Inn}{Inn}
\DeclareMathOperator{\sign}{sign}
\DeclareMathOperator{\topo}{top}
\DeclareMathOperator{\Wh}{Wh}
  \newcommand{\IH}{\mathbb{H}}
  \newcommand{\IP}{\mathbb{P}}
  \newcommand{\IQ}{\mathbb{Q}}
  \newcommand{\IR}{\mathbb{R}}
  \newcommand{\IZ}{\mathbb{Z}}
  \newcommand{\calc}{\mathcal{C}}
  \newcommand{\call}{\mathcal{L}}
  \newcommand{\caln}{\mathcal{N}}
  \newcommand{\cals}{\mathcal{S}}
  \newcommand{\bff}{{\mathbf f}}
  \newcommand{\bfL}{{\mathbf L}}
\newcounter{commentcounter}
\theoremstyle{plain}
\newtheorem{theorem}{Theorem}[section]
\newtheorem{lemma}[theorem]{Lemma}
\newtheorem{proposition}[theorem]{Proposition}
\newtheorem{conjecture}[theorem]{Conjecture}
\theoremstyle{definition}
\newtheorem{definition}[theorem]{Definition}
\newtheorem{question}[theorem]{Question}
\newtheorem{remark}[theorem]{Remark}
\theoremstyle{remark}
\let\c@equation=\c@theorem\makeatother
\title{Survey on aspherical manifolds}
              \author{Wolfgang L\"uck}
      \address{Westf\"alische Wilhelms-Universit\"at M\"unster\\
               Mathematisches Institut\\
               Einsteinstr.~62,
               D-48149 M\"unster, Germany}
                \email{lueck@math.uni-muenster.de}
      \urladdr{http://www.math.uni-muenster.de/u/lueck}
         \date{July 2009}
     \keywords{aspherical closed manifolds, topological rigidity, conjectures due to
     Borel, Novikov, Hopf, Singer, non-positively curved spaces.}
    \subjclass[2000]{57N99, 19A99, 19B99, 19D99, 19G24, 20C07, 20F25, 57P10}
\begin{document}

\maketitle

\begin{abstract}
  This is a survey on known results and open problems about closed
  aspherical manifolds, i.e., connected closed manifolds whose
  universal coverings are contractible.  Many examples come from
  certain kinds of non-positive curvature conditions.  The
  property aspherical, which is a purely homotopy theoretical
  condition, implies many striking results about the geometry and
  analysis of the manifold or its universal covering, and the ring
  theoretic properties and the $K$- and $L$-theory of the group ring
  associated to its fundamental group.  The Borel Conjecture predicts
  that closed aspherical manifolds are topologically rigid. The article contains
  new results about product decompositions of closed aspherical manifolds and 
  an announcement of a result joint with Arthur Bartels  and Shmuel Weinberger
  about hyperbolic groups with spheres of dimension $\ge 6$ as boundary.
  At the end we describe (winking) our universe of closed manifolds.

\end{abstract}


\typeout{--------------------   Section 0: Introduction --------------------------}

\setcounter{section}{-1}
\section{Introduction}
\label{sec:Introduction}

A space $X$ is called \emph{aspherical} if it is path connected and
all its higher homotopy groups vanish, i.e., $\pi_n(X)$ is trivial for
$n \ge 2$.  This survey article is devoted to aspherical closed
manifolds. These are very interesting objects for many reasons. Often
interesting geometric constructions or examples lead to aspherical
closed manifolds. The study of the question which groups occur as
fundamental groups of closed aspherical manifolds is
intriguing. The condition aspherical is of purely homotopy
theoretical nature. Nevertheless there are some interesting questions and conjectures
about curvature properties of a closed aspherical Riemann manifold
and about the spectrum of the Laplace operator on its
universal covering. The Borel Conjecture predicts that aspherical
closed topological manifolds are topologically rigid and that
aspherical compact Poincar\'e complexes are homotopy equivalent to
closed manifolds. We discuss the status of some of these questions and
conjectures. Examples of exotic aspherical closed manifolds come from
hyperbolization techniques and we list certain examples.
At the end we describe (winking) our universe of closed manifolds.

The results about product decompositions of closed aspherical manifolds
in Section~\ref{sec:Product_decompositions}
are new and Section~\ref{sec:Boundaries_of_hyperbolic_groups} contains
an announcement of a result joint with Arthur Bartels and Shmuel Weinberger 
about hyperbolic groups with spheres of dimension $\ge 6$ as boundary.

The author wants to the thank the Max-Planck-Institute for Mathematics
in Bonn for its hospitality during his stay from October 2007 until
December 2007 when parts of this paper were written.  The work was
financially supported by the Sonderforschungsbereich 478 \---
Geometrische Strukturen in der Mathematik \--- the
Max-Planck-Forschungspreis and the Leibniz-Preis of the author.
The author wants to thank the referee for his valuable suggestions.

The paper is organized as follows:

\tableofcontents


\typeout{--------------------   Section 1 ---------------------------------------}

\section{Homotopy theory of aspherical manifolds}
\label{sec:Homotopy_theory_of_aspherical_manifolds}

From the homotopy theory point of view an aspherical $CW$-complex
is completely determined by its fundamental group. Namely

\begin{theorem}[Homotopy classification of aspherical spaces] \
  \label{the:homotopy_class_asph_CW}
  \begin{enumerate}

  \item \label{the:homotopy_class_asph_CW:homotopy_equivalence} Two
    aspherical $CW$-complexes are homotopy equivalent if and only if
    their fundamental groups are isomorphic;

  \item \label{the:homotopy_class_asph_CW:homotopy_classes} Let $X$
    and $Y$ be connected $CW$-complexes. Suppose that $Y$ is
    aspherical.  Then we obtain a bijection
    $$[X,Y] \xrightarrow{\cong} [\Pi(X),\Pi(Y)],
    \quad [f] \mapsto [\Pi(f))],$$ 
    where $[X,Y]$ is the set of homotopy
    classes of maps from $X$ to $Y$, $\Pi(X)$, $\Pi(Y)$ are the
    fundamental groupoids, $[\Pi(X),\Pi(Y)]$ is the set of natural 
    equivalence classes of functors from $\Pi(X)$ to $\Pi(Y)$
    and $\Pi(f) \colon \Pi(X) \to \Pi(Y)$ is the functor
    induced by $f \colon X \to Y$.
\end{enumerate}
\end{theorem}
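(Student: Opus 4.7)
I would prove part (ii) by a standard obstruction-theoretic cell-by-cell argument using $\pi_{\geq 2}(Y) = 0$, and deduce (i) from (ii) formally. The point is that aspherical $Y$ is a Postnikov tower truncated at stage $1$, so maps into $Y$ are controlled entirely by their action on the fundamental groupoid.

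\textbf{Surjectivity of $[X,Y]\to [\Pi(X),\Pi(Y)]$.} Given a functor $F: \Pi(X)\to\Pi(Y)$, I construct $f: X\to Y$ with $\Pi(f)$ naturally isomorphic to $F$ by induction over the skeleta of $X$. On $X^{(0)}$ set $f(x):=F(x)$. On each $1$-cell $e$ from $x_0$ to $x_1$, with associated morphism $\gamma_e \in \mor_{\Pi(X)}(x_0,x_1)$, choose a path in $Y$ representing $F(\gamma_e)$. The crux is extension over a $2$-cell attached along $\varphi: S^1 \to X^{(1)}$: the class $[\varphi]$ is the identity in $\Pi(X)$ because the $2$-cell itself nullhomotopes it, so functoriality forces $F([\varphi])$ to be trivial in $\Pi(Y)$, hence $f\circ\varphi$ is nullhomotopic in $Y$ and $f$ extends. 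For $n\geq 3$, obstructions to extending over an $n$-cell lie in $\pi_{n-1}(Y)=0$, so extension is automatic.

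\textbf{Injectivity.} Given $f_0, f_1: X\to Y$ and a natural isomorphism $\tau: \Pi(f_0)\Rightarrow\Pi(f_1)$, I build a homotopy $H: X\times I\to Y$ cell-by-cell. On $X^{(0)}\times I$ let $H|_{\{x\}\times I}$ realize $\tau_x$. For a $1$-cell $e$, the prescribed data on $\partial(e\times I)$ comprise $f_0|_e$, $f_1|_e$, and $\tau$ at the two endpoints; the resulting loop in $Y$ is precisely the boundary of the naturality square of $\tau$ at $\gamma_e$, which is trivial in $\pi_1(Y)$, so $H$ extends over the disc $e\times I$. On cells of dimension $\geq 2$, obstructions to extension lie in $\pi_{\geq 2}(Y)=0$.

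\textbf{Deduction of (i).} For connected aspherical CW-complexes $X,Y$, the groupoids $\Pi(X),\Pi(Y)$ are connected, hence equivalent to the one-object groupoids of their vertex groups. An isomorphism $\pi_1(X,x_0)\cong \pi_1(Y,y_0)$ thus upgrades to an equivalence $\Phi: \Pi(X)\to\Pi(Y)$ with quasi-inverse $\Psi$. By (ii) realize $\Phi,\Psi$ by maps $f: X\to Y$ and $g: Y\to X$; a second application of (ii) to the natural isomorphisms $\Psi\Phi\cong\id_{\Pi(X)}$ and $\Phi\Psi\cong\id_{\Pi(Y)}$ upgrades them to homotopies $g\circ f\simeq\id_X$ and $f\circ g\simeq\id_Y$. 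The main obstacle in the whole argument is the $2$-cell step in the surjectivity construction: dimension $\geq 3$ is automatic from asphericity and dimension $1$ is by construction, but at level $2$ one must genuinely use functoriality of $F$ to translate the relations in $\pi_1(X)$ encoded by the $2$-cells into trivial elements of $\pi_1(Y)$.
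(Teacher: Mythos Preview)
Your argument is correct and is precisely the approach the paper takes: the paper's own proof says only that surjectivity and injectivity are obtained by constructing the desired map or homotopy ``inductively over the skeletons of the source,'' and that (i) follows from (ii). You have simply fleshed out that sketch, correctly identifying that the only nontrivial step is the $2$-cell extension, where functoriality of $F$ (rather than vanishing of higher homotopy) is what makes it work.
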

\begin{proof}~\ref{the:homotopy_class_asph_CW:homotopy_classes} 
  One easily checks
  that the map is well-defined.  For the proof of surjectivity and
  injectivity one constructs the desired preimage or the desired
  homotopy inductively over the skeletons of the source.
  \\[1mm]\ref{the:homotopy_class_asph_CW:homotopy_equivalence} This
  follows directly from
  assertion~\ref{the:homotopy_class_asph_CW:homotopy_classes}.
\end{proof}

The description using fundamental groupoids is elegant and base point
free, but a reader may prefer its more concrete interpretation in
terms of fundamental groups, which we will give next:  Choose base
points $x \in X$ and $y \in Y$. Let $\hom(\pi_1(X,x),\pi_1(Y,y))$ be
the set of group homomorphisms from $\pi_1(X,x)$ to $\pi_1(Y,y)$.  The
group $\Inn\bigl(\pi_1(Y,y)\bigr)$ of inner automorphisms of
$\pi_1(Y,y)$ acts on $\hom\bigl(\pi_1(X,x),\pi_1(Y,y)\bigr)$ from the
left by composition.  We leave it to the reader to check that we
obtain a bijection
$$\Inn\bigl(\pi_1(Y,y)\bigr)\backslash
\hom\bigl(\pi_1(X,x),\pi_1(Y,y)\bigr) \xrightarrow{\cong}
[\Pi(X),\Pi(Y)],$$ under which the bijection appearing in
Lemma~\ref{the:homotopy_class_asph_CW}~%
\ref{the:homotopy_class_asph_CW:homotopy_classes} sends $[f]$ to the
class of $\pi_1(f,x)$ for any choice of representative of $f$ with
$f(x) = y$. In the sequel we will often ignore base points especially
when dealing with the fundamental group.

\begin{lemma} \label{lem:characterization_of_aspherical_complexes} A
  $CW$-complex $X$ is aspherical if and only if it is connected and
  its universal covering $\widetilde{X}$ is contractible.
\end{lemma}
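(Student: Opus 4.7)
The plan is to prove both directions by combining two standard facts: (a) for any covering map $p \colon \widetilde{X} \to X$, the induced map $p_* \colon \pi_n(\widetilde{X}) \to \pi_n(X)$ is an isomorphism for all $n \ge 2$; and (b) Whitehead's theorem, which says that a weakly contractible $CW$-complex is contractible, and that the universal cover of a $CW$-complex inherits a natural $CW$-structure.

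For the forward direction, I would assume $X$ aspherical. Then $X$ is in particular path connected, and since $X$ is a $CW$-complex it is locally path connected and semilocally simply connected, so the universal covering $\widetilde{X} \to X$ exists. By construction $\pi_1(\widetilde{X}) = 0$, and for $n \ge 2$ fact (a) gives $\pi_n(\widetilde{X}) \cong \pi_n(X) = 0$. Hence $\widetilde{X}$ is weakly contractible, and by fact (b) it is contractible.

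For the backward direction, I would assume $X$ is connected (hence path connected, since $X$ is a $CW$-complex) and $\widetilde{X}$ is contractible. Then $\pi_n(\widetilde{X}) = 0$ for every $n$, and fact (a) gives $\pi_n(X) \cong \pi_n(\widetilde{X}) = 0$ for $n \ge 2$, so $X$ is aspherical by definition.

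The only real subtlety is the invocation of Whitehead's theorem in the forward direction, which requires $\widetilde{X}$ to carry a $CW$-structure; this is the one technical point I would flag, but it is a standard result (the universal cover of a $CW$-complex admits a canonical $CW$-structure lifting that of $X$). Everything else is immediate from the long exact sequence of the fibration $\pi_1(X) \to \widetilde{X} \to X$ (or equivalently from the lifting lemma for covering spaces applied to maps out of $S^n$ for $n \ge 2$).
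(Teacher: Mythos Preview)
Your proof is correct and follows essentially the same approach as the paper: the covering projection induces isomorphisms on $\pi_n$ for $n \ge 2$, and Whitehead's theorem is used to pass from weak contractibility of $\widetilde{X}$ to contractibility. The paper's version is terser and does not spell out the existence of the universal cover or the $CW$-structure on it, but the underlying argument is identical.
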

\begin{proof} The projection $p \colon \widetilde{X} \to X$ induces
  isomorphisms on the homotopy groups $\pi_n$ for $n \ge 2$ and a
  connected $CW$-complex is contractible if and only if all its
  homotopy groups are trivial 
  (see\cite[Theorem~IV.7.17 on page~182]{Whitehead(1978)}.
\end{proof}

An aspherical $CW$-complex $X$ with fundamental group $\pi$ is the
same as an \emph{Eilenberg Mac-Lane space $K(\pi,1)$ of type
$(\pi,1)$} and the same as the \emph{classifying space $B\pi$ for
the group $\pi$}.


\typeout{--------------------   Section 2 ---------------------------------------}

\section{Examples of aspherical manifolds}
\label{sec:Examples_of_aspherical_manifolds}

In this section we give examples and constructions  of aspherical closed manifolds.

\subsection{Non-positive curvature}
\label{subsec:Non-positive_curvature}

Let $M$ be a closed smooth manifold. Suppose that it possesses a Riemannian
metric whose sectional curvature is non-positive, i.e., is~$\le 0$ everywhere.
Then the  universal covering $\widetilde{M}$ inherits a complete Riemannian metric
whose sectional curvature is non-positive.
Since $\widetilde{M}$ is
simply-connected and has non-positive sectional curvature, 
the Hadamard-Cartan Theorem 
(see~\cite[3.87 on page~134]{Gallot-Hulin-Lafontaine(1987)})
implies that $\widetilde{M}$
is diffeomorphic to $\IR^n$ and hence contractible. We conclude
that $\widetilde{M}$ and hence $M$ is aspherical.

\subsection{Low-dimensions}
\label{subsec:Low-dimensions}

A connected closed $1$-dimensional manifold is homeomorphic to $S^1$
and hence aspherical.

Let $M$ be a connected closed $2$-dimensional manifold. 
Then $M$  is either aspherical or
homeomorphic to $S^2$ or $\IR\IP^2$. The following statements are
equivalent:
i.) $M$ is aspherical.\ ii.) $M$  admits a Riemannian metric which is
\emph{flat}, i.e., with sectional curvature constant  $0$, or which
is \emph{hyperbolic}, i.e., with sectional curvature constant 
$-1$.\  iii) The universal covering of $M$ is homeomorphic to $\IR^2$. 

A connected closed $3$-manifold $M$ is called \emph{prime} if for any
decomposition as a connected sum $M \cong M_0 \sharp M_1$ one of the
summands $M_0$ or $M_1$ is homeomorphic to~$S^3$. It is called
\emph{irreducible} if any embedded sphere $S^2$ bounds a disk
$D^3$. Every irreducible closed $3$-manifold is prime. A prime closed
$3$-manifold is either irreducible or an $S^2$-bundle over $S^1$
(see~\cite[Lemma~3.13 on page~28]{Hempel(1976)}). A
closed orientable $3$-manifold is aspherical if and only if it is
irreducible and has infinite fundamental group. A closed $3$-manifold
is aspherical if and only if it is irreducible and its fundamental
group is infinite and contains no element of order $2$.
This follows from the Sphere Theorem~\cite[Theorem~4.3 on
page~40]{Hempel(1976)}.

Thurston's Geometrization Conjecture implies that a closed
$3$-manifold is aspherical if and only if its universal covering is
homeomorphic to $\IR^3$. This follows from~\cite[Theorem~13.4 on
page~142]{Hempel(1976)} and the fact that the
$3$-dimensional geometries which have compact quotients and 
whose underlying topological spaces are contractible
have as underlying
smooth manifold $\IR^3$ (see~\cite{Scott(1983)}).

A proof of Thurston's Geometrization Conjecture
is given in~\cite{Morgan-Tian(2008)} following ideas of Perelman.

There are examples of closed orientable $3$-manifolds that are
aspherical but do not support a Riemannian metric with non-positive
sectional curvature (see~\cite{Leeb(1995)}).

For more information about $3$-manifolds we refer for instance
to~\cite{Hempel(1976),Scott(1983)}.

\subsection{Torsionfree discrete subgroups of almost connected Lie
  groups}
\label{subsec:Torsionfree_discrete_subgroups_of_almost_connected_Lie_groups}

Let $L$ be a Lie group with finitely many path components. Let $K
\subseteq L$ be a maximal compact subgroup. Let $G \subseteq L$ be a
discrete torsionfree subgroup.  Then $M = G\backslash L/K$ is a closed
aspherical manifold with fundamental group $G$ since its universal
covering $L/K$ is diffeomorphic to $\IR^n$ for appropriate $n$
(see~\cite[Theorem 1. in Chapter VI]{Helgason(2001)}).

\subsection{Hyperbolization}
\label{subsec:hyperbolization}
A very important construction of aspherical manifolds comes from the
\emph{hyperbolization technique} due to Gromov~\cite{Gromov(1987)}. It
turns a cell complex into a non-positively curved (and hence
aspherical) polyhedron. The rough idea is to define this procedure for
simplices such that it is natural under inclusions of simplices and
then define the hyperbolization of a simplicial complex by gluing the
results for the simplices together as described by the combinatorics
of the simplicial complex.  The goal is to achieve that the result
shares some of the properties of the simplicial complexes one has
started with, but additionally to produce a non-positively curved and
hence aspherical polyhedron.  Since this construction preserves local
structures, it turns manifolds into manifolds.

We briefly explain what the \emph{orientable hyperbolization
  procedure} gives.  Further expositions of this construction can be
found in~\cite{Charney-Davis(1995),Davis(2002exotic),Davis(2008cox),Davis-Januszkiewicz(1991)}.
We start with a finite-dimensional simplicial complex $\Sigma$ and a assign to
it a cubical cell complex $h(\Sigma)$ and a natural map $c \colon h(\Sigma) \to \Sigma$
with the following properties:
\begin{enumerate}

\item $h(\Sigma)$ is non-positively curved and in particular
  aspherical;

\item The natural map $c \colon h(\Sigma) \to \Sigma$ induces a
  surjection on the integral homology;

\item $\pi_1(f) \colon \pi_1(h(\Sigma)) \to \pi_1(\Sigma)$ is
  surjective;

\item If $\Sigma$ is an orientable manifold, then

  \begin{enumerate}

  \item $h(\Sigma)$ is a manifold;

  \item The natural map $c \colon h(\Sigma) \to \Sigma$ has degree
    one;

  \item There is a stable isomorphism between the tangent bundle
    $Th(\Sigma)$ and the pullback $c^*T\Sigma$;

  \end{enumerate}
\end{enumerate}

\begin{remark}[Characteristic numbers and aspherical manifolds]
  \label{rem:Characteristic_numbers_and_aspherical_manifolds}
  Suppose that $M$ is a closed manifold. Then the 
  pullback of the characteristic classes of $M$ under the natural map
  $c \colon h(M) \to M$ yield the characteristic classes of $h(M)$,
  and $M$ and $h(M)$ have the same characteristic numbers. This shows
  that the condition aspherical does not impose any restrictions on
  the characteristic numbers of a manifold.
\end{remark}

\begin{remark}[Bordism and aspherical manifolds]
  \label{rem:bordism_and_aspherical_manifolds}
  The conditions above say that $c$ is a normal map in the sense of
  surgery. One can show that $c$ is normally bordant to the identity
  map on $M$. In particular $M$ and $h(M)$ are oriented bordant.

  Consider a bordism theory $\Omega_*$ for PL-manifolds or smooth
  manifolds which is given by imposing conditions on the stable
  tangent bundle. Examples are unoriented bordism, oriented bordism,
  framed bordism. Then any bordism class can be represented by an
  aspherical closed manifold. If two closed aspherical manifolds
  represent the same bordism class, then one can find an aspherical
  bordism between them.  See~\cite[Remarks~15.1]{Davis(2002exotic)}
  and~\cite[Theorem~B]{Davis-Januszkiewicz(1991)}.
\end{remark}

\subsection{Exotic aspherical manifolds}
\label{subsec:exotic_asphercial_manifolds}

The following result is taken from
Davis-Januszkiewicz~\cite[Theorem~5a.1]{Davis-Januszkiewicz(1991)}.
\begin{theorem}\label{the:exotic_aspherical_manifolds_in_dimension_4}
  There is a closed aspherical $4$-manifold $N$ with the following
  properties:

  \begin{enumerate}

  \item $N$ is not homotopy equivalent to a $PL$-manifold;

  \item $N$ is not triangulable, i.e., not homeomorphic to a
    simplicial complex;

  \item The universal covering $\widetilde{N}$ is not homeomorphic to
    $\IR^4$;

  \item $N$ is homotopy equivalent to a piecewise flat, non-positively
    curved polyhedron.

  \end{enumerate}
\end{theorem}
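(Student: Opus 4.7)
The proof strategy is to combine the orientable hyperbolization recalled in Subsection~\ref{subsec:hyperbolization} with Davis's reflection-group trick and Freedman's exotic topological 4-manifolds. The guiding principle is that hyperbolization of a closed oriented PL manifold $M$ produces a degree-one normal map $c\colon h(M)\to M$ covered by a stable isomorphism of tangent bundles, so every stable tangential invariant of $M$ (Stiefel--Whitney classes, Pontrjagin classes, and in dimension four the Kirby--Siebenmann invariant) is faithfully transported to $h(M)$, together with enough of the homotopy-type information to detect whether the manifold is homotopy equivalent to a PL manifold.

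I would proceed in three steps. First, fix a closed topological 4-manifold $M_{0}$ with the pathologies we wish to transfer: take $M_{0}$ to have intersection form $E_{8}$, which by Freedman exists topologically and by Rokhlin's theorem is not homotopy equivalent to any PL (equivalently smooth) 4-manifold, and whose Kirby--Siebenmann invariant is nontrivial. Second, apply a \emph{relative} hyperbolization: start from an aspherical closed PL 4-manifold $M$ agreeing with $M_{0}$ outside a small 4-ball, hyperbolize relative to that ball, and then paste in the Freedman $E_{8}$-piece in the topological category. This yields an aspherical closed topological 4-manifold $N$ together with a degree-one normal map $c\colon N\to M_{0}$. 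Properties~(i) and~(ii) follow from the invariance of Kirby--Siebenmann and signature-mod-$16$ under $c$: any PL (or triangulated) manifold in the homotopy type of $N$ would transport along $c$ to equip $M_{0}$ with the same structure, contradicting the Rokhlin obstruction. Third, use Davis's reflection-group construction applied to an exotic contractible open 4-manifold (built from a Mazur-type piece) in the same relative slot, so that the universal cover $\widetilde{N}$ inherits a bad end, is not simply connected at infinity, and hence is not homeomorphic to $\IR^{4}$, giving~(iii). Property~(iv) is built in: by construction $N$ is homotopy equivalent to the underlying cubical polyhedron produced by hyperbolization, which carries the Gromov piecewise flat metric of nonpositive curvature.

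The main obstacle is that the basic hyperbolization procedure of Subsection~\ref{subsec:hyperbolization} lives naturally in the PL (simplicial) category, whereas the exotic data responsible for~(i)--(iii) is only available in the topological category via Freedman and Davis. The technical core of the argument is therefore to execute hyperbolization relative to a small ball (or relative to a boundary) in such a way that one can subsequently graft in the Freedman $E_{8}$-piece and the Davis exotic end, verify that the combined object is still an aspherical closed topological $4$-manifold, and track that the stable tangent microbundle, the Kirby--Siebenmann invariant, and the topology at infinity all inherit the desired pathologies simultaneously from the building blocks.
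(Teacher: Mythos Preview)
The paper itself gives no proof of this statement; it simply quotes Davis--Januszkiewicz \cite[Theorem~5a.1]{Davis-Januszkiewicz(1991)}. Your sketch is aiming at that construction and has the right ingredients (hyperbolization, Freedman, Rokhlin), but the way you assemble them in Step~2 does not work, and Step~3 is unnecessary.

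Step~2 as written is impossible. There is no closed PL $4$-manifold ``agreeing with the $E_8$-manifold outside a $4$-ball'': removing a ball does not kill the Kirby--Siebenmann obstruction, and if the complement were PL you could re-cap with a PL ball and make the $E_8$-manifold PL. Worse, ``pasting in the Freedman $E_8$-piece'' along an $S^3$ is a connected sum, and a connected sum with a simply connected manifold with nontrivial $H_2$ destroys asphericity (it creates $\pi_2$). The actual Davis--Januszkiewicz construction avoids both problems by hyperbolizing a PL \emph{homology} manifold rather than a manifold: take the smooth $E_8$-plumbing $P$ with $\partial P=\Sigma^3$ (the Poincar\'e sphere) and cone off the boundary to form $X=P\cup c\Sigma^3$. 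Hyperbolization preserves vertex links, so $h(X)$ is a nonpositively curved aspherical polyhedron with exactly one non-manifold point, whose link is $\Sigma^3$. Excise the open cone on $\Sigma^3$ and glue in Freedman's \emph{contractible} topological $4$-manifold $W$ with $\partial W=\Sigma^3$. Because $W$ is contractible this does not change the homotopy type, so the result $N$ is a closed aspherical topological $4$-manifold homotopy equivalent to $h(X)$; this is (iv). The intersection form of $N$ is even with signature $\equiv 8\pmod{16}$, so any manifold homotopy equivalent to $N$ is spin with that signature, and Rokhlin gives (i); (ii) then follows in dimension~$4$.

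Your Step~3 conflates two separate constructions. Property~(iii) does not require an additional reflection-group or Mazur-type input ``in the same relative slot''; it comes from the same $N$. In the universal cover $\widetilde{N}$ there are infinitely many bicollared copies of $\Sigma^3$ (the lifts of $\partial W$), each with a PL collar on the polyhedral side coming from $\widetilde{h(X)}$. One then argues, again via the Rokhlin invariant of $\Sigma^3$, that this configuration cannot sit inside $\IR^4$. There is no coherent way to perform a second ``relative'' gluing in the same cone, and none is needed.
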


The next result is due to 
Davis-Januszkiewicz~\cite[Theorem~5a.4]{Davis-Januszkiewicz(1991)}.

\begin{theorem}[Non-PL-example] \label{the:universal_covering_not_PL}
  For every $n \ge 4$ there exists a closed aspherical $n$-manifold
  which is not homotopy equivalent to a PL-manifold
\end{theorem}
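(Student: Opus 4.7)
The plan is to reduce the statement for $n \ge 5$ to the $4$-dimensional case already established in Theorem~\ref{the:exotic_aspherical_manifolds_in_dimension_4}. Let $N$ be the closed aspherical $4$-manifold provided by that theorem, and for $n \ge 4$ set $X_n := N \times T^{n-4}$, where $T^k$ denotes the $k$-torus (so $X_4 = N$). Then $X_n$ is a closed $n$-manifold whose universal covering $\widetilde{N} \times \IR^{n-4}$ is contractible, hence $X_n$ is aspherical by Lemma~\ref{lem:characterization_of_aspherical_complexes}.

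The substantive task is to show that no closed PL $n$-manifold is homotopy equivalent to $X_n$. I would argue by contradiction: suppose $f \colon M \to X_n$ is a homotopy equivalence from a closed PL $n$-manifold $M$. The strategy is to peel off the $n-4$ circle factors one at a time, arriving at a closed PL $4$-manifold homotopy equivalent to $N$, which contradicts Theorem~\ref{the:exotic_aspherical_manifolds_in_dimension_4}. Concretely, Farrell's PL fibering theorem is applied iteratively: at each step, a homotopy equivalence from a closed PL $m$-manifold to $Y \times S^1$ (with $m \ge 6$) is homotopic to a PL-bundle projection provided the fibering obstruction in $\Wh\bigl(\pi_1(Y) \times \IZ\bigr)$ vanishes, and the fiber is then a closed PL $(m-1)$-manifold homotopy equivalent to $Y$. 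To secure the vanishing of all fibering obstructions, one chooses the $4$-dimensional example $N$ so that $\pi_1(N)$ lies in a class of groups for which the $K$-theoretic Farrell-Jones Conjecture is known (for instance, a hyperbolic or $\CAT(0)$ group arising from the hyperbolization construction); this forces $\Wh\bigl(\pi_1(N) \times \IZ^k\bigr) = 0$ at every intermediate stage.

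The main obstacle is the boundary dimension: to strip off the last circle the fiber must be $4$-dimensional, whereas Farrell's theorem requires the total space to have dimension at least $6$. I would circumvent this by first stabilizing $M$ with an extra $S^1$-factor, carrying out the entire fibering argument in dimension $\ge 6$, and then invoking the injectivity of the map $\calS^{PL}(N) \to \calS^{PL}(N \times S^1)$ between PL structure sets (a Shaneson-style splitting result, whose validity rests on the same $K$- and $L$-theoretic vanishing hypothesis). This injectivity allows us to descend from a PL $5$-manifold homotopy equivalent to $N \times S^1$ to the desired PL $4$-manifold homotopy equivalent to $N$, completing the contradiction. The delicate point is thus not the asphericity or the iteration itself, but rather bookkeeping of Whitehead torsion obstructions at each stage and handling of the final $4$-dimensional slice, where low-dimensional surgery pathologies would otherwise threaten the argument.
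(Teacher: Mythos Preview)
The paper itself gives no proof of this theorem; it simply attributes the result to Davis--Januszkiewicz~\cite[Theorem~5a.4]{Davis-Januszkiewicz(1991)}, who construct examples directly in every dimension $n\ge 4$ by hyperbolizing a triangulation of a topological manifold with non-vanishing Kirby--Siebenmann invariant, rather than by taking products with tori. So your route is genuinely different from theirs.

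Your product strategy is reasonable in spirit, but the argument as written has a real gap at the passage from dimension~$5$ to dimension~$4$. What you need is the implication ``$\cals^{PL}(N\times S^1)\neq\emptyset \implies \cals^{PL}(N)\neq\emptyset$''. Injectivity of the crossing map $\cals^{PL}(N)\to\cals^{PL}(N\times S^1)$ says nothing about this (an injective map out of the empty set is vacuous), and the actual statement you would need is a PL \emph{splitting} theorem producing a two-sided PL $4$-submanifold in a PL $5$-manifold. The relevant splitting theorems (Farrell, Cappell) require the ambient dimension to be at least~$6$, and your stabilization trick just returns you to a PL $5$-manifold homotopy equivalent to $N\times S^1$, so you are running in a circle. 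In the PL ($=$ smooth) category in dimension~$4$ no such splitting result is available.

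There is, however, a clean repair that stays within the framework of this paper and avoids fibering altogether. By Theorem~\ref{the:exotic_aspherical_manifolds_in_dimension_4}(iv) the group $\pi_1(N)$ is $\CAT(0)$, hence so is $\pi_1(N)\times\IZ^{n-4}$, and by Theorem~\ref{the:status_of-Farrell-Jones} and Lemma~\ref{lem:FJC_implies_BC} the Borel Conjecture holds for $X_n=N\times T^{n-4}$ when $n\ge 5$. Thus any closed manifold homotopy equivalent to $X_n$ is \emph{homeomorphic} to $X_n$. Since the Kirby--Siebenmann class is a homeomorphism invariant and $ks(X_n)=ks(N)\times 1\neq 0$ (the hyperbolization map has degree one and pulls back the stable tangent microbundle, so $ks(N)=c^*ks\neq 0$), no PL manifold can be homeomorphic to $X_n$. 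This gives the result for $n\ge 5$ directly, and $n=4$ is Theorem~\ref{the:exotic_aspherical_manifolds_in_dimension_4}.
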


The proof of the following theorem can be found 
in~\cite{Davis(1983)},~\cite[Theorem~5b.1]{Davis-Januszkiewicz(1991)}.

\begin{theorem}[Exotic universal
  covering] \label{the:universal_covering_notIR} For each $n \ge 4$
  there exists a closed aspherical $n$-dimensional 
  manifold such that its universal
  covering is not homeomorphic to $\IR^n$.
\end{theorem}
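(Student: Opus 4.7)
The plan is to build the example using Davis's reflection group trick, starting from a compact contractible manifold with bad boundary. Throughout, the key recognition principle is that $\IR^n$ is simply connected at infinity (for $n\ge 2$), so it suffices to produce a closed aspherical $n$-manifold whose universal cover fails this property.

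First I would produce, for each $n\ge 4$, a compact contractible PL $n$-manifold $W^n$ whose boundary $\partial W$ has non-trivial fundamental group. For $n\ge 5$ one can take a non-simply-connected PL homology $(n{-}1)$-sphere $\Sigma^{n-1}$ (such homology spheres bounding contractible manifolds exist by classical constructions of Kervaire, Mazur, and Newman) and let $W$ be the contractible manifold it bounds; for $n=4$ the relevant Mazur/Newman-type examples serve the same role. These $W$ are contractible, so in particular aspherical, but $\partial W$ is not simply connected.

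Next I would perform Davis's reflection construction: choose a triangulation of $\partial W$ as a flag complex $L$, and let $W_L$ be the associated Coxeter block on $W$. Using the right-angled Coxeter group $\Gamma_L$ generated by reflections indexed by the vertices of $L$, form the space $M = (\Gamma_L\times W)/{\sim}$, where the equivalence relation identifies points on $\partial W$ according to the stabilizers prescribed by $L$. The standard properties of the reflection group trick (see the hyperbolization/Davis references cited earlier in the paper, in particular \cite{Davis(1983)} and \cite{Davis-Januszkiewicz(1991)}) guarantee that $M$ is a closed aspherical $n$-manifold, and that the universal cover $\widetilde M$ is built by gluing together countably many copies of $W$ along their boundaries according to the Cayley graph of $\Gamma_L$.

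Finally I would show that $\widetilde M$ is not simply connected at infinity, which finishes the proof by the recognition principle above. The idea is that any compact set in $\widetilde M$ is contained in a finite union of $\Gamma_L$-translates of $W$; removing such a union leaves, near infinity, a complement that retracts onto the boundary of one of the omitted copies of $W$, i.e.\ onto a copy of $\partial W$. A non-trivial loop in $\partial W$ then represents a non-trivial element in $\pi_1$ of the complement of every sufficiently large compact set, so $\widetilde M$ is not simply connected at infinity and hence not homeomorphic to $\IR^n$.

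The main obstacle is the last step: carefully verifying that the non-triviality of $\pi_1(\partial W)$ really survives into $\pi_1$ at infinity of $\widetilde M$. This requires understanding the combinatorial structure of $\widetilde M$ as a union of copies of $W$ indexed by $\Gamma_L$ and checking that the copies of $\partial W$ one meets at infinity are incompressible in $\widetilde M$. Once this is in hand, the existence statement for all $n\ge 4$ is immediate from the input manifolds $W^n$ constructed in the first step.
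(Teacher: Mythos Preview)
Your outline is correct and is precisely the Davis construction; the paper itself gives no proof at all, merely citing \cite{Davis(1983)} and \cite[Theorem~5b.1]{Davis-Januszkiewicz(1991)}, and you have accurately reconstructed the argument those references contain. One small caution on the final step: the complement of a finite union of translates of $W$ in $\widetilde M$ does not literally retract onto a single copy of $\partial W$, so the actual computation of $\pi_1$ at infinity in \cite{Davis(1983)} proceeds via an inverse-limit argument over the nested family of such complements rather than a single retraction, but the conclusion and the role of $\pi_1(\partial W)\neq 1$ are exactly as you describe.
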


By the Hadamard-Cartan Theorem (see~\cite[3.87 on
page~134]{Gallot-Hulin-Lafontaine(1987)}) the manifold appearing in
Theorem~\ref{the:universal_covering_notIR} above cannot be
homeomorphic to a smooth manifold with Riemannian metric with
non-positive sectional curvature.

The following theorem is proved in~\cite[Theorem~5c.1 and Remark on page
  386]{Davis-Januszkiewicz(1991)} by considering the ideal boundary,
  which is a quasiisometry invariant in the negatively curved case.

\begin{theorem}[Exotic example with hyperbolic fundamental group]
  \label{the:asheprcial_hyperbolic_fundmanetal_group}
  For every $n \ge 5$ there exists an aspherical closed smooth
  $n$-dimensional manifold $N$ which is homeomorphic to a strictly
  negatively curved polyhedron and has in particular a hyperbolic
  fundamental group such that the universal covering is homeomorphic
  to $\IR^n$ but $N$ is not homeomorphic to a smooth manifold with
  Riemannian metric with negative sectional curvature.
\end{theorem}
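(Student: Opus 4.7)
My plan is to construct $N$ by strict hyperbolization (the refinement of Gromov's procedure due to Charney and Davis), and then to detect the non-existence of a smooth negatively curved Riemannian metric via the Gromov boundary of the (hyperbolic) fundamental group, exactly as the preceding theorem's proof suggests.

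First, I would choose as input a closed PL $n$-manifold $M$ equipped with a triangulation (or more generally a cubical complex structure) having at least one vertex $v$ whose link is a non-standard PL homology $(n-1)$-sphere. Such links exist precisely when $n-1 \ge 4$: the double (and iterated) suspension of the Poincar\'e homology $3$-sphere is a homology sphere that is a topological sphere by the double suspension theorem, but is not PL-equivalent to the standard $(n-1)$-sphere, and can be arranged to be non-simply-connected away from suspension points. The output of strict hyperbolization applied to this input is a closed topological manifold $P$ carrying a piecewise hyperbolic, locally $\CAT(-1)$ metric, together with a tangentially natural map $P \to M$ (items (i)--(iv) in Section~\ref{subsec:hyperbolization}). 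Local manifold recognition plus the $n \ge 5$ smoothing theory of Kirby--Siebenmann let me replace $P$ by a smooth manifold $N$ homeomorphic to it.

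Next I would extract the geometric consequences. Because $\widetilde{P}$ is a simply connected complete $\CAT(-1)$ space which is locally a topological manifold, it is contractible and, for $n \ge 5$, homeomorphic to $\IR^n$; this is where the dimension hypothesis is used. A geometric action of $\Gamma := \pi_1(P) = \pi_1(N)$ on this $\CAT(-1)$ space makes $\Gamma$ word-hyperbolic. Its Gromov boundary $\partial_\infty \Gamma$ is then a well-defined topological invariant of $\Gamma$, and is equivariantly homeomorphic to the visual boundary $\partial_\infty \widetilde{P}$ of the polyhedral $\CAT(-1)$ metric. Crucially, strict hyperbolization preserves the homeomorphism type of links, so $\partial_\infty \widetilde{P}$ contains an embedded copy of the exotic link of $v$ as a ``sphere of directions'' at a lift of $v$.

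Now suppose $N$ admitted a smooth Riemannian metric of negative sectional curvature. Pulling back to $\widetilde{N}$, the universal cover is a Hadamard manifold of negative curvature, and its visual boundary is a topological $(n-1)$-sphere. By the quasi-isometry invariance of the Gromov boundary for a hyperbolic group $\Gamma$ acting geometrically on both metrics, this would force $\partial_\infty\Gamma \cong S^{n-1}$; but by the previous paragraph $\partial_\infty\Gamma$ contains the non-standard link of $v$, contradicting local sphericity of $S^{n-1}$. Hence no such metric exists.

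The main obstacle is step three: making precise that the exotic link of $v$ really embeds in $\partial_\infty \widetilde{P}$ in a way that obstructs it being a manifold, and justifying the homeomorphism-invariance step from ``smoothly negatively curved Riemannian'' to ``Gromov boundary is $S^{n-1}$''; the first relies on a careful analysis of geodesic rays emanating from a vertex in the piecewise hyperbolic metric of Charney--Davis (the link appears as the space of such directions at infinity), and the second on the fact that both polyhedral and Riemannian metrics implement geometric actions of $\Gamma$ and hence realize the same Gromov boundary up to homeomorphism. The remaining verifications -- smoothing, universal cover $\cong \IR^n$, asphericity -- are standard given the hyperbolization package recalled above.
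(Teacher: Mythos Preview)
Your outline is correct and follows precisely the route the paper indicates (it simply cites Davis--Januszkiewicz, Theorem~5c.1 and the Remark on page~386, invoking the ideal boundary as a quasi-isometry invariant): strict hyperbolization of a triangulated manifold with a non-spherical vertex link, hyperbolicity of $\pi_1$, and the contradiction between the polyhedral visual boundary and the sphere that a Riemannian negatively curved metric would force. One small correction to your setup: the input $M$ cannot literally be a \emph{PL manifold}, since by definition every vertex link in a PL manifold is a PL sphere; what you need (and what your double-suspension discussion actually produces) is a \emph{non-PL} triangulation of a smoothable topological manifold---e.g.\ the double-suspension triangulation of $S^n$---so that some vertex link is a homology sphere not homeomorphic to $S^{n-1}$, and the smoothability of $N$ then comes from the tangent-bundle clause of the hyperbolization package rather than from generic Kirby--Siebenmann theory.
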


The next results are due to 
Belegradek~\cite[Corollary~5.1]{Belegradek(2006)},
Mess~\cite{Mess(1991)} and Weinberger
(see~\cite[Section~13]{Davis(2002exotic)}).

\begin{theorem}[Exotic fundamental groups]\
  \label{the:Exotic_fundamental_groups}

  \begin{enumerate}
    \item For every $n \ge 4$ there is a closed aspherical manifold of
    dimension $n$ whose fundamental group contains an infinite
    divisible abelian group;

  \item For every $n \ge 4$ there is a closed aspherical manifold of
    dimension $n$ whose fundamental group has an unsolvable word
    problem and whose simplicial volume is non-zero.
  \end{enumerate}
\end{theorem}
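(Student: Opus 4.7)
The plan is to deduce both statements from a single principle made available by the relative hyperbolization procedure of Davis-Januszkiewicz and its strict form due to Charney-Davis: for every $n \ge 4$ and every finitely presented group $G$, there exists a closed aspherical $n$-manifold $M$ such that $G$ embeds as a subgroup of $\pi_1(M)$ via a $\pi_1$-injective map from a finite subcomplex realizing $G$. To set this up, I would start from a finite $2$-complex $L$ with $\pi_1(L) \cong G$, thicken $L$ to a regular neighbourhood $K$ in some $\IR^N$ with $N \ge n$, adjust dimensions by the standard manifold tricks so that $(K,L \subset \partial K)$ has the right shape, and apply relative strict hyperbolization. The Charney-Davis construction yields a closed aspherical piecewise-hyperbolic $n$-manifold $M$ with word hyperbolic fundamental group, containing $L$ as a $\pi_1$-injective subcomplex.

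For part~(ii), I would invoke the Novikov-Boone theorem to pick a finitely presented group $G$ with unsolvable word problem. The embedding $G \hookrightarrow \pi_1(M)$ then forces $\pi_1(M)$ to have unsolvable word problem, because solvability of the word problem descends to explicitly embedded finitely generated subgroups: any decision algorithm for $\pi_1(M)$ would, after substituting each $G$-generator by its word in the generators of $\pi_1(M)$, solve the word problem for $G$. For the simplicial volume, strict hyperbolization equips $M$ with a piecewise-hyperbolic $\CAT(-1)$ metric, in particular a metric of negative curvature in the synthetic sense; Gromov's theorem on simplicial volume of negatively curved closed manifolds (alternatively, Mineyev's bounded-cohomology result for hyperbolic groups applied to the aspherical manifold $M$) then yields $\|M\| > 0$.

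For part~(i) the apparent obstruction is that an infinite divisible abelian group such as $\IQ$ is not finitely generated, so it cannot directly play the role of $G$ in the construction above. I would bridge this by appealing to Higman's embedding theorem: the group $\IQ$ has a recursive presentation, hence embeds in some finitely presented group $H$. Taking $G = H$ in the construction, the resulting closed aspherical $n$-manifold $M$ has $\IQ \hookrightarrow H \hookrightarrow \pi_1(M)$, so $\pi_1(M)$ contains an infinite divisible abelian subgroup as required.

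The main obstacle is the verification that the relative strict hyperbolization really produces a closed aspherical manifold in which the input subcomplex $L$ is included $\pi_1$-injectively; this is the substantive content of the Davis-Januszkiewicz and Charney-Davis foundational papers and is what makes the entire strategy work. A secondary technical subtlety lies in part~(ii), where the passage from negative curvature to positive simplicial volume is not a formal consequence of asphericity alone but requires the Gromov-Mineyev bounded cohomology input; one has to be sure that the strict (rather than the ordinary) hyperbolization is used so that this input is available.
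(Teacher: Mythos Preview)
Your central claim is self-refuting. You assert that relative strict hyperbolization yields a closed aspherical $M$ whose fundamental group is word hyperbolic while also containing an arbitrary finitely presented $G$ as a subgroup. But hyperbolic groups have solvable word problem and all their abelian subgroups are virtually cyclic; hence such a $\pi_1(M)$ could contain neither a group with unsolvable word problem nor a copy of $\IQ$. You even invoke the first of these facts yourself in your argument for~(ii) --- solvability descends to computably embedded finitely generated subgroups --- so the contradiction is already visible in your own text.

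The underlying error is that \emph{relative} hyperbolization, which leaves the subcomplex $L$ (or the boundary of its thickening) fixed, does not produce a locally $\CAT(-1)$ space unless the fixed locus already carries such a structure; at best the resulting $\pi_1(M)$ is hyperbolic \emph{relative} to the peripheral subgroup containing $G$, never absolutely hyperbolic. Once absolute hyperbolicity is withdrawn, your route to $\|M\|>0$ via Gromov/Mineyev collapses. Belegradek's actual proof of~(ii) does proceed through relative hyperbolization, but establishes positivity of the simplicial volume via bounded cohomology of \emph{relatively} hyperbolic groups, an input you do not invoke. The paper itself points instead to the reflection group trick (Theorem~\ref{the:Reflection_group_trick}), which requires as input a group with a \emph{finite} model for $BG$ --- not merely a finitely presented group --- and part~(i) then rests on Mess's explicit construction of such a group containing an infinite divisible abelian subgroup, with no detour through Higman's embedding theorem.
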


Notice that a finitely presented group with unsolvable word problem is
not a $\CAT(0)$-group, not hyperbolic,
not automatic, not asynchronously automatic, 
not residually finite and not linear over any commutative ring
(see~\cite[Remark~5.2]{Belegradek(2006)}).

The proof of Theorem~\ref{the:Exotic_fundamental_groups} is based
on the \emph{reflection group trick} as it appears for instance 
in~\cite[Sections~8,10 and~13]{Davis(2002exotic)}. It can be summarized as follows.

\begin{theorem}[Reflection group trick]
\label{the:Reflection_group_trick}
Let $G$ be a group which  possesses a finite model for $BG$.
Then there is a closed aspherical manifold $M$ and a map
$i \colon BG \to M$ and $r \colon M \to BG$ such that $r \circ i = \id_{BG}$.
\end{theorem}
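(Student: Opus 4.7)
The plan is to carry out Davis's reflection group trick in four steps: thicken, triangulate, reflect, and project. Throughout, the role of asphericity of $BG$ is to guarantee that the resulting closed manifold is still aspherical, while the reflection construction is what converts a manifold with boundary into a closed one without destroying the retraction.

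First, since $BG$ is finite, I would embed a simplicial model of $BG$ into some $\IR^N$ and take a regular neighbourhood $L$. This produces a compact PL-manifold with boundary together with a deformation retraction $r_0 \colon L \to BG$ and an inclusion $i_0 \colon BG \hookrightarrow L$ satisfying $r_0 \circ i_0 = \id_{BG}$. Since $L$ is homotopy equivalent to $BG$, it is aspherical. Next, I would replace the triangulation of $\partial L$ by its barycentric subdivision, call it $K$; barycentric subdivisions are always flag complexes, and this is what makes the next step behave well.

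Now comes the Davis construction. Let $W=W_K$ be the right-angled Coxeter group whose generators $s_v$ are indexed by the vertices of $K$, with relations $s_v^2=1$ for all $v$ and $(s_vs_w)^2=1$ whenever $\{v,w\}$ spans an edge of $K$. For each vertex $v$, let $L_v\subseteq\partial L$ denote the closed star of $v$ in $K$, and define
\[
M \;=\; \bigl(W\times L\bigr)/\!\sim, \qquad (w,x)\sim(w',x) \text{ if } w^{-1}w'\in W_{S(x)},
\]
where $S(x)=\{v : x\in L_v\}$ and $W_{S(x)}$ is the (necessarily finite) subgroup generated by the $s_v$ with $v\in S(x)$. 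The map $q \colon M\to L$, $[w,x]\mapsto x$, is the quotient by the natural $W$-action, and the inclusion $j \colon L\hookrightarrow M$, $x\mapsto[1,x]$, realises $L$ as a strict fundamental domain, so $q\circ j=\id_L$. Setting
\[
i \;=\; j\circ i_0\colon BG\to M, \qquad r \;=\; r_0\circ q\colon M\to BG,
\]
gives $r\circ i=r_0\circ q\circ j\circ i_0=r_0\circ i_0=\id_{BG}$, which is the desired retraction.

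The main obstacle is to verify that $M$ is a closed aspherical manifold. The manifold property is the reason one requires $K$ to be a flag triangulation of $\partial L$: the link of each cell of $M$ is then a sphere built from joins of finite Coxeter nerves, so $M$ is locally Euclidean, and closedness follows because $L$ is compact and the vertex set of $K$ is finite so each point has finite isotropy. Asphericity is the deeper input: by the theorem of Davis (with Moussong's verification of the link condition), the universal cover of $M$ admits a piecewise Euclidean $\CAT(0)$ metric as soon as $L$ is aspherical and $K$ is flag, so its universal cover is contractible and $M$ is aspherical. Granting these two results from the literature, the construction above completes the proof.
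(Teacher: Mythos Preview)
The paper does not actually supply a proof of this theorem; it only records the statement and points to Davis's exposition~\cite[Sections~8, 10 and~13]{Davis(2002exotic)}. Your sketch \emph{is} the standard Davis construction referenced there, and the four steps (thicken $BG$ to a compact manifold-with-boundary $L$, take a flag triangulation $K$ of $\partial L$, form the basic construction $M=\mathcal{U}(W_K,L)$, and use the fold map $q$ together with the fundamental-domain inclusion $j$ to build the retraction) are exactly right.

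One point deserves tightening. Your justification of asphericity invokes a piecewise Euclidean $\CAT(0)$ metric on $\widetilde{M}$ ``as soon as $L$ is aspherical and $K$ is flag''. Moussong's theorem puts a $\CAT(0)$ metric on the Davis complex $\Sigma_{W_K}$, i.e.\ on the special case $L=\mathrm{cone}(K)$; it does not directly endow $\mathcal{U}(W_K,L)$ with a $\CAT(0)$ metric for an arbitrary aspherical thickening $L$, and the $\CAT(0)$ condition is a metric statement that has nothing to do with $L$ being aspherical. The asphericity of $\mathcal{U}(W_K,L)$ for aspherical $L$ and flag $K$ is instead Davis's theorem, proved by showing that the universal cover is built by gluing copies of $\widetilde{L}$ along contractible (or $\pi_1$-injective aspherical) mirror intersections and is therefore contractible; see \cite[Chapter~8]{Davis(2008cox)} or \cite[Section~8]{Davis(2002exotic)}. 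With that citation corrected, your argument is complete and matches what the paper is quoting.
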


\begin{remark}[Reflection group trick and various conjectures]
\label{rem:Reflection_group_trick_and_various_conjectures}
Another interesting immediate consequence
of the reflection group trick is
(see also~\cite[Sections~11]{Davis(2002exotic)})
that many well-known conjectures about groups 
hold for every group which possesses a finite model for $BG$ if and only if it holds for
the fundamental group of every closed aspherical manifold. This applies for instance
to the Kaplansky Conjecture, Unit Conjecture, Zero-divisor-conjecture, 
Baum-Connes Conjecture, Farrell-Jones Conjecture
for algebraic $K$-theory for regular $R$, Farrell-Jones Conjecture
for algebraic $L$-theory, the vanishing of $\widetilde{K}_0(\IZ G)$ and of $\Wh(G) = 0$,
For information about these conjectures and their links we refer
for instance to~\cite{Bartels-Lueck-Reich(2008appl)},\cite{Lueck(2002)}
and~\cite{Lueck-Reich(2005)}. Further similar consequences of the reflection group trick
can be found in Belegradek~\cite{Belegradek(2006)}.

\end{remark}


\typeout{--------------------   Section 3 ---------------------------------------}

\section{Non-aspherical closed manifolds}
\label{sec:Non-aspherical_closed_manifolds}

A closed manifold of dimension $\ge 1$ with finite fundamental group is never aspherical.
So prominent non-aspherical manifolds are spheres, lens spaces, real projective spaces
and complex projective spaces. 

\begin{lemma}
  \label{lem:torsion_in_pi}
  The fundamental group of an aspherical finite-dimensional
  $CW$-complex $X$ is torsionfree.
\end{lemma}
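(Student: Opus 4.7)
The plan is to argue by contradiction: assume $\pi = \pi_1(X)$ contains a nontrivial element $g$ of finite order $n \ge 2$, and derive a contradiction from the finite-dimensionality of $X$.

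First, I would set $C = \langle g \rangle \subseteq \pi$, a nontrivial finite cyclic subgroup. By Lemma~\ref{lem:characterization_of_aspherical_complexes}, the universal covering $\widetilde{X}$ is contractible, and the deck transformation action of $\pi$ on $\widetilde{X}$ is free. Consequently, $C$ also acts freely on $\widetilde{X}$. The quotient $Y := C \backslash \widetilde{X}$ is then a $CW$-complex whose universal covering is $\widetilde{X}$, so $Y$ is aspherical with $\pi_1(Y) \cong C$; in other words, $Y$ is a model for $BC = K(C,1)$. Moreover, $Y$ inherits a $CW$-structure from $\widetilde{X}$ (coming from the $\pi$-equivariant $CW$-structure on $\widetilde{X}$ lifted from $X$), and $\dim Y \le \dim \widetilde{X} = \dim X < \infty$.

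To reach a contradiction, I would invoke the classical fact that a finite nontrivial cyclic group has infinite cohomological dimension: for any $n \ge 2$, the group cohomology $H^k(C;\IF_p)$ is nonzero for all $k \ge 0$, where $p$ is a prime dividing $n$. Since $Y$ is a $K(C,1)$, one has $H^k(Y;\IF_p) \cong H^k(C;\IF_p) \ne 0$ for arbitrarily large $k$. But $Y$ is a finite-dimensional $CW$-complex, so its cellular cohomology vanishes above $\dim Y$. This contradiction forces $C$ to be trivial, so $\pi$ has no elements of finite order.

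I expect the main (minor) subtlety to be the verification that $\widetilde{X}$, and hence $Y$, carries a $CW$-structure of the same finite dimension as $X$; this is standard equivariant $CW$-theory for covering spaces. The essential input is the infinite cohomological dimension of nontrivial finite groups, which is a well-known computation and not a real obstacle.
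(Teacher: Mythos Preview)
Your argument is correct and follows essentially the same route as the paper: pass to the quotient $C\backslash\widetilde{X}$ to obtain a finite-dimensional $BC$, and then contradict the fact that a nontrivial finite cyclic group has nonvanishing (co)homology in arbitrarily high degrees. The only cosmetic difference is that the paper phrases the contradiction via integral homology $H_k(BC)$ rather than $H^k(C;\IF_p)$.
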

\begin{proof}
  Let $C \subseteq \pi_1(X)$ be a finite cyclic subgroup of
  $\pi_1(X)$.  We have to show that $C$ is trivial. Since $X$ is
  aspherical, $C\backslash \widetilde{X}$ is a finite-dimensional
  model for $BC$. Hence $H_k(BC)=  0$ for large $k$. This implies that
  $C$ is trivial.
\end{proof}

\begin{lemma}
  \label{lem:connected_sum_not_aspherical}
  If $M$ is a connected sum $M_1 \sharp M_2$ of two closed manifolds
  $M_1$ and $M_2$ of dimension $n \ge 3$ which are not homotopy
  equivalent to a sphere, then $M$ is not aspherical.
\end{lemma}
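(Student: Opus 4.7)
The plan is to argue by contradiction: assume $M = M_1 \sharp M_2$ is aspherical. Set $\Gamma_i = \pi_1(M_i)$. Since $n \ge 3$ the connecting sphere $S^{n-1}$ is simply connected, so Seifert--van Kampen gives $\pi_1(M) = \Gamma_1 \ast \Gamma_2$. By Lemma~\ref{lem:torsion_in_pi}, this free product is torsionfree, so each factor is either trivial or infinite. I then split into two cases.

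In the first case both $\Gamma_i$ are nontrivial, hence infinite. A free product of two nontrivial infinite groups has infinitely many ends, by Stallings's theorem or directly from Bass--Serre theory applied to the free product tree. On the other hand, the fundamental group of a closed aspherical $n$-manifold with $n \ge 2$ has exactly one end: the universal cover $\widetilde M$ is a contractible $n$-manifold, and the standard identification of $H^1(\Gamma_1\ast\Gamma_2; \IZ[\Gamma_1\ast\Gamma_2])$ (possibly up to orientation twist, using $\IZ/2$-coefficients in the non-orientable case) with $H_{n-1}(\widetilde M;\IZ)$ forces the former to vanish. One end versus infinitely many is the desired contradiction.

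In the second case one factor, say $\Gamma_1$, is trivial; so $M_1$ is simply connected, hence orientable. Since $M_1\not\simeq S^n$, Hurewicz and Whitehead together produce nontrivial reduced integral homology in $M_1$; Poincar\'e duality yields $H_{n-1}(M_1;\IZ)\cong H^1(M_1;\IZ)=0$, so there exists $k_0$ with $2\le k_0\le n-2$ and $H_{k_0}(M_1;\IZ)\ne 0$. I now identify $\widetilde M$ explicitly. Let $A$ be the preimage of $M_2\setminus D_2$ and $B$ the preimage of $M_1\setminus D_1$ in $\widetilde M$. Triviality of $\Gamma_1$ trivializes the covering over $M_1\setminus D_1$, so $B$ is a disjoint union of copies of $M_1\setminus D_1$ indexed by $\Gamma_2$, with $A\cap B$ a matching disjoint union of copies of $S^{n-1}$. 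In the range $2\le k\le n-2$ both $H_k(A\cap B;\IZ)$ and $H_{k-1}(A\cap B;\IZ)$ vanish, so the Mayer--Vietoris sequence produces a summand $\bigoplus_{\Gamma_2} H_{k_0}(M_1\setminus D_1;\IZ) = \bigoplus_{\Gamma_2} H_{k_0}(M_1;\IZ)$ inside $H_{k_0}(\widetilde M;\IZ)$. This summand is nonzero, contradicting contractibility of $\widetilde M$.

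The main obstacle is Case~2: one has to identify the universal cover precisely enough to run Mayer--Vietoris, and one has to invoke Poincar\'e duality to exclude the degree $k_0 = n-1$, which would otherwise force dealing with the boundary spheres $S^{n-1}$ in the Mayer--Vietoris sequence and complicate the bookkeeping. Case~1, by contrast, is a clean ends-count once one knows that $\text{PD}_n$-groups are one-ended for $n\ge 2$.
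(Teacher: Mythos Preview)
Your argument is correct and follows a genuinely different route from the paper's. The paper works uniformly via the collapse map $f\colon M_1\sharp M_2\to M_1\vee M_2$: this is $(n-1)$-connected, so contractibility of $\widetilde{M}$ forces $H_m(\widetilde{M_1\vee M_2})=0$ for $1\le m\le n-1$, and a Mayer--Vietoris computation on $\widetilde{M_1\vee M_2}$ then gives $H_m(\widetilde{M_k})=0$ in the same range. Lemma~\ref{lem:torsion_in_pi} rules out finite $\pi_1(M_k)$ (that would force $M_k\simeq S^n$), so each $\widetilde{M_k}$ is non-compact and hence acyclic, making both $M_k$ aspherical. The final contradiction is numerical: $M_1\sharp M_2$ and $M_1\vee M_2$ would then be homotopy equivalent, but their Euler characteristics differ by $(-1)^{n+1}$.

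Your Case~1 replaces all of this with a single group-theoretic stroke: a $\mathrm{PD}_n$-group with $n\ge 2$ is one-ended (because $H^1(G;\IZ G)=0$), while a free product of two infinite groups has infinitely many ends. This is conceptually cleaner and immediately yields the stronger statement that no $\mathrm{PD}_n$-group with $n\ge 2$ decomposes as a nontrivial free product; the cost is importing the ends dictionary, which the paper avoids entirely. Your Case~2 is closer in spirit to the paper's Mayer--Vietoris step, but run directly on $\widetilde{M}$ rather than on $\widetilde{M_1\vee M_2}$. One small point worth making explicit: for $n=3$ the range $2\le k_0\le n-2$ is empty, but your deduction still goes through, since it then shows that a simply connected closed $3$-manifold is a homotopy $S^3$, so Case~2 simply does not arise when $n=3$. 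The paper's approach, by contrast, is entirely elementary (no Stallings, no ends) and extracts along the way the extra information that asphericity of $M$ would force each $M_i$ to be aspherical, which your argument does not recover.
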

\begin{proof}
  We proceed by contradiction. Suppose that $M$ is aspherical.  The
  obvious map $f \colon M_1 \sharp M_2 \to M_1 \vee M_2$ given by
  collapsing $S^{n-1}$ to a point is $(n-1)$-connected, where $n$ is
  the dimension of $M_1$ and $M_2$.  Let $p \colon \widetilde{M_1 \vee
    M_2} \to M_1 \vee M_2$ be the universal covering. By the
  Seifert-van Kampen Theorem the fundamental group of $\pi_1(M_1\vee
  M_2)$ is $\pi_1(M_1) \ast \pi_1(M_2)$ and the inclusion of $M_k \to
  M_1 \vee M_2$ induces injections on the fundamental groups for $k =
  1,2$.  We conclude that $p^{-1}(M_k) = \pi_1(M_1 \vee M_2) \times_{\pi_1(M_k)}
  \widetilde{M_k}$ for $k = 1,2$.  Since $n \ge 3$, the map $f$
  induces an isomorphism on the fundamental groups and an
  $(n-1)$-connected map $\widetilde{f} \colon \widetilde{M_1 \sharp
    M_2} \to \widetilde{M_1 \vee M_2}$.  Since $\widetilde{M_1 \sharp
    M_2}$ is contractible, $H_m(\widetilde{M_1 \vee M_2}) = 0$ for $1
  \le m \le n-1$.  Since $p^{-1}(M_1) \cup p^{-1}(M_2) =
  \widetilde{M_1 \vee M_2}$ and $p^{-1}(M_1) \cap p^{-1}(M_2) =
  p^{-1}(\{\bullet\}) = \pi_1(M_1 \vee M_2)$, we conclude
  $H_m(p^{-1}(M_k)) = 0$ for $1 \le m \le n-1$ from the Mayer-Vietoris
  sequence.  This implies $H_m(\widetilde{M_k}) = 0$ for $1 \le m \le
  n-1$ since $p^{-1}(M_k)$ is a disjoint union of copies of $\widetilde{M_k}$. 

  Suppose that $\pi_1(M_k)$ is finite. Since $\pi_1(M_1 \sharp M_2)$
  is torsionfree by Lemma~\ref{lem:torsion_in_pi}, $\pi_1(M_k)$ must
  be trivial and $M_k = \widetilde{M_k}$.  Since $M_k$ is simply
  connected and $H_m(M_k) = 0$ for $1 \le m \le n-1$, $M_k$ is
  homotopy equivalent to $S^n$. Since we assume that $M_k$ is not
  homotopy equivalent to a sphere, $\pi_1(M_k)$ is infinite.  This
  implies that the manifold $\widetilde{M_k}$ is non-compact and hence
  $H_n(\widetilde{M_k}) = 0$.  Since $\widetilde{M_k}$ is
  $n$-dimensional, we conclude $H_m(\widetilde{M_k}) = 0$ for $m \ge 1$.
  Since $\widetilde{M_k}$ is simply connected, all homotopy groups
  of $\widetilde{M_k}$ vanish by the Hurewicz
  Theorem~\cite[Corollary~IV.7.8 on page~180]{Whitehead(1978)}. We
  conclude from
  Lemma~\ref{lem:characterization_of_aspherical_complexes} that $M_1$
  and $M_2$ are aspherical. Using the Mayer-Vietoris argument above
  one shows analogously that $M_1 \vee M_2$ is aspherical. Since $M$
  is by assumption aspherical, $M_1\sharp M_2$ and $M_1 \vee M_2$ are
  homotopy equivalent by Lemma~\ref{the:homotopy_class_asph_CW}~%
\ref{the:homotopy_class_asph_CW:homotopy_equivalence}. Since they
  have different Euler characteristics, namely $\chi(M_1 \sharp M_2) =
  \chi(M_1) + \chi(M_2) - (1 + (-1)^n)$ and $\chi(M_1 \vee M_2) =
  \chi(M_1) + \chi(M_2) - 1$, we get a contradiction.
\end{proof}


\typeout{--------------------   Section 4 ---------------------------------------}

\section{The Borel Conjecture}
\label{sec:Borel_Conjecture}

In this section we deal with

\begin{conjecture}[Borel Conjecture for a group $G$]
  \label{con:Borel_Conjecture}
  If $M$ and $N$ are closed aspherical manifolds of dimensions $\ge 5$
  with $\pi_1(M) \cong \pi_1(N) \cong G$, then $M$ and $N$ are
  homeomorphic and any homotopy equivalence $M \to N$ is homotopic to
  a homeomorphism.
\end{conjecture}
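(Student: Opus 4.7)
The plan is to translate the statement into surgery theory and then apply the Farrell--Jones Conjecture. Given any homotopy equivalence $f \colon N \to M$ between closed aspherical manifolds of dimension $n \ge 5$ with fundamental group $G$, Theorem~\ref{the:homotopy_class_asph_CW}\ref{the:homotopy_class_asph_CW:homotopy_equivalence} already furnishes a homotopy equivalence $M \simeq N$ abstractly, so the whole content is to deform the given $f$ to a homeomorphism. Equivalently, the topological structure set $\mathcal{S}^{\topo}(M)$ should consist of a single element. I would therefore first recall the topological surgery exact sequence
$$\cdots \to L_{n+1}(\IZ G) \to \mathcal{S}^{\topo}(M) \to \mathcal{N}(M) \to L_n(\IZ G),$$
and reduce the Borel Conjecture to two assembly-type statements: (a) the $L$-theoretic assembly map $H_n(M;\bfL\langle 1\rangle) \to L_n(\IZ G)$ is an isomorphism in the relevant degrees, so that the cokernel/kernel contributions to $\mathcal{S}^{\topo}(M)$ vanish, and (b) a Whitehead-type obstruction vanishes, so that any $h$-cobordism between $M$ and $N$ is actually an $s$-cobordism and hence a product by the $s$-Cobordism Theorem.

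The next step is to invoke the Farrell--Jones Conjecture in algebraic $K$- and $L$-theory for the group $G$. The crucial input here is that $M$ is aspherical, so $M \simeq BG$, and by Lemma~\ref{lem:torsion_in_pi} the group $G$ is torsion-free. For a torsion-free group the family $\VCyc$ appearing in the Farrell--Jones isomorphism conjecture effectively collapses to the trivial family, so the conjecture gives precisely the two ingredients needed: bijectivity of the $L$-theory assembly map required in (a), and the vanishing $\Wh(G) = \widetilde{K}_0(\IZ G) = 0$ required in (b). Feeding these back into the surgery sequence forces $\mathcal{S}^{\topo}(M)$ to be a single point.

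Putting the pieces together yields the conclusion: any homotopy equivalence $f \colon N \to M$ represents the distinguished element of $\mathcal{S}^{\topo}(M)$, hence is homotopic to a homeomorphism; in particular $M$ and $N$ are homeomorphic. The existence of the homotopy equivalence to start the surgery machine on is automatic from Theorem~\ref{the:homotopy_class_asph_CW}\ref{the:homotopy_class_asph_CW:homotopy_equivalence}, given the assumption $\pi_1(M) \cong \pi_1(N) \cong G$ and the asphericity of $M$ and $N$.

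The hard part, and the reason this statement is recorded as a conjecture rather than a theorem, is establishing the Farrell--Jones input for $G$. That input is now known for rich classes of groups, such as hyperbolic groups, $\CAT(0)$-groups, solvable groups, and lattices in virtually connected Lie groups, and for fundamental groups in those classes the above strategy does succeed unconditionally. However, closed aspherical manifolds with exotic fundamental groups, for instance those produced by the reflection group trick in Theorem~\ref{the:Exotic_fundamental_groups} (groups with unsolvable word problem, infinite divisible abelian subgroups, non-residually-finite groups), fall well outside the current reach of the geometric techniques used to verify Farrell--Jones, and constitute the genuine obstruction to an unconditional proof.
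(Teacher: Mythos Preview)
The statement you were given is a \emph{conjecture}, so the paper does not (and cannot) supply a proof of it. What you have actually written is an argument for the conditional implication ``Farrell--Jones Conjecture for $G$ $\Rightarrow$ Borel Conjecture for $G$'', followed by a correct explanation of why the unconditional statement remains open. This is precisely how the paper handles the matter: the conjecture is stated as Conjecture~\ref{con:Borel_Conjecture}, and the implication you sketch is the content of Lemma~\ref{lem:FJC_implies_BC}, whose proof runs through the surgery exact sequence exactly as you do and shows that $\cals^{\topo}(M)$ is a point.

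One technical point worth sharpening: your item (b) frames the role of the $K$-theoretic vanishing ($\Wh(G)=0$, $\widetilde{K}_0(\IZ G)=0$, $K_{-i}(\IZ G)=0$) as ensuring that $h$-cobordisms are $s$-cobordisms. In the surgery-sequence argument the paper gives, its role is slightly different and more direct: the Farrell--Jones assembly map is formulated with the $\langle -\infty\rangle$-decoration and the periodic spectrum $\bfL$, whereas the geometric surgery sequence uses the $s$-decoration and the $1$-connective cover $\bfL\langle 1\rangle$. The $K$-theory vanishing is what allows one to pass between decorations (via Rothenberg sequences), and the comparison $H_k(M;\bfL\langle 1\rangle)\to H_k(M;\bfL)$ is handled separately. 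Once that identification is made, triviality of $\cals^{\topo}(M)$ already gives the homeomorphism directly; no separate $s$-cobordism step is needed. This does not affect the correctness of your overall strategy, which matches the paper's.
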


\begin{definition}[Topologically rigid]
  \label{def:topological_rigid}
   We call a closed manifold $N$ 
  \emph{topologically rigid} if any homotopy equivalence $M \to N$
   with a closed manifold $M$ as source is homotopic to a homeomorphism.
 \end{definition}
 
If the Borel Conjecture holds for all finitely presented groups,
then every closed aspherical manifold is topologically rigid.

The main tool to attack the Borel Conjecture is surgery theory and the
Farrell-Jones Conjecture. We consider the following special version of
the Farrell-Jones Conjecture.

\begin{conjecture}[Farrell-Jones Conjecture for torsionfree groups and
  regular rings]
  \label{con:Farrell-Jones_Conjecture_torsionfree}
  Let $G$ be a torsionfree group and let $R$ be a regular ring, e.g.,
  a principal ideal domain, a field, or $\IZ$. Then
  \begin{enumerate}
  \item $K_{n}(RG) = 0$ for $n \le -1$;
  \item The change of rings homomorphism $K_0(R) \to K_0(RG)$ is
    bijective.  (This implies in the case $R = \IZ$ that the reduced
    projective class group $\widetilde{K}_0(\IZ G)$ vanishes;
  \item The obvious map $K_1(R) \times G/[G,G] \to K_1(RG)$ is
    surjective.  (This implies in the case $R = \IZ$ that the
    Whitehead group $\Wh(G)$ vanishes);
  \item For any orientation homomorphism $w \colon G \to \{\pm 1\}$
    the $w$-twisted $L$-theoretic assembly map
$$H_n(BG;^w\bfL^{\langle -\infty\rangle}) \xrightarrow{\cong} L^{\langle -\infty \rangle}_n(RG,w)$$
is bijective.
\end{enumerate}
\end{conjecture}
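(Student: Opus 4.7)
The plan is to derive this statement as a consequence of the full Farrell-Jones Conjecture, which asserts that the assembly map
$$H_n^G\bigl(\EGF{G}{\VCyc}; \bfK_R\bigr) \xrightarrow{\cong} K_n(RG)$$
relative to the family $\VCyc$ of virtually cyclic subgroups is an isomorphism, together with its $L$-theoretic analogue. The torsionfree and regularity hypotheses then collapse the left-hand side to ordinary homology of $BG$, after which each of the four consequences falls out of an Atiyah--Hirzebruch-type spectral sequence.

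First I would exploit the torsionfree hypothesis: every virtually cyclic subgroup of a torsionfree group is either trivial or infinite cyclic. Then I would invoke the regularity of $R$ in the form of the Bass--Heller--Swan decomposition
$$K_n(R[t,t^{-1}]) \cong K_n(R) \oplus K_{n-1}(R),$$
whose $\mathrm{Nil}$-terms vanish precisely because $R$ is regular. A standard transitivity argument then shows that the relative assembly map from the trivial family to $\VCyc$ is a weak equivalence, so the assembly map simplifies to
$$H_n(BG; \bfK_R) \xrightarrow{\cong} K_n(RG).$$
The analogous $L$-theoretic assembly map with orientation character $w$ becomes $H_n(BG; {}^w\bfL^{\langle -\infty\rangle}) \to L^{\langle -\infty\rangle}_n(RG,w)$, giving assertion (iv) at once.

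Next I would read off assertions (i)--(iii) from the Atiyah--Hirzebruch spectral sequence
$$E^2_{p,q} = H_p\bigl(BG; K_q(R)\bigr) \Longrightarrow K_{p+q}(RG).$$
For $n \le -1$, regularity of $R$ forces $K_q(R) = 0$ for all $q \le -1$, so the spectral sequence is concentrated in non-negative $q$ and vanishes in total degree $\le -1$, proving (i). For $n=0$, the edge homomorphism $K_0(R) = E^2_{0,0} \to K_0(RG)$ is precisely the change-of-rings map, and the isomorphism $H_0(BG;K_0(R)) = K_0(R) \xrightarrow{\cong} K_0(RG)$ then yields (ii); specializing to $R = \IZ$ gives $\widetilde{K}_0(\IZ G) = 0$. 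For $n=1$, the filtration gives a surjection $K_1(R) \oplus H_1(BG; K_0(R)) \twoheadrightarrow K_1(RG)$, and since $H_1(BG;\IZ) \cong G/[G,G]$, this yields (iii); specializing to $R = \IZ$ and quotienting by $\pm G$ then produces the vanishing of $\Wh(G)$.

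The main obstacle is entirely in the input: the general Farrell-Jones Conjecture is open, so this plan is only a \emph{reduction}, not an unconditional proof. The honest work sits in establishing the $\VCyc$-assembly isomorphism itself, which for the classes of groups where it is currently known (hyperbolic, $\CAT(0)$, lattices in virtually connected Lie groups, mapping class groups in low complexity, etc.) requires a controlled-algebra argument driven by a geometric flow space — originally the geodesic flow of Farrell--Jones on non-positively curved manifolds, and in the modern setup the Bartels--L\"uck flow construction. Everything above is purely formal once that geometric isomorphism theorem is in hand.
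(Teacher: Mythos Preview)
The statement you are addressing is labeled a \emph{Conjecture} in the paper, not a theorem; the paper gives no proof of it and does not claim one. It is stated as a special, unwound form of the full Farrell--Jones Conjecture, and the paper simply quotes it as the input needed for Lemma~\ref{lem:FJC_implies_BC} and Theorem~\ref{the:Product_decomposition}, referring the reader to the survey~\cite{Lueck-Reich(2005)} for background. So there is no ``paper's own proof'' against which to compare your attempt.

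That said, what you have written is exactly the standard explanation of why this special version is a consequence of the full Farrell--Jones Conjecture, and it is correct in outline: torsionfreeness reduces $\VCyc$ to the trivial and infinite cyclic subgroups, regularity kills the $\mathrm{Nil}$-terms in Bass--Heller--Swan so the transitivity principle collapses the assembly map to $H_n(BG;\bfK_R)\to K_n(RG)$, and the Atiyah--Hirzebruch spectral sequence then reads off (i)--(iii), while (iv) is immediate. You are also honest that this is a reduction and not a proof, and you correctly locate the genuine content in the geometric flow-space arguments of Farrell--Jones and Bartels--L\"uck.

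One small caution on your derivation of (iii): the spectral sequence gives $E^\infty_{1,0}=H_1(BG;K_0(R))=G^{\mathrm{ab}}\otimes K_0(R)$, and the image of the ``obvious map'' from $G/[G,G]$ only hits $G^{\mathrm{ab}}\otimes[R]$. So surjectivity of $K_1(R)\times G/[G,G]\to K_1(RG)$ as stated really uses that $K_0(R)$ is generated by $[R]$, which holds for the examples listed (fields, PIDs, $\IZ$) but not for arbitrary regular $R$. This is a wrinkle in the formulation of the conjecture itself rather than in your argument.
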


\begin{lemma}
  \label{lem:FJC_implies_BC}
  Suppose that the torsionfree group $G$ satisfies the version of the
  Farrell-Jones Conjecture stated in
  Conjecture~\ref{con:Farrell-Jones_Conjecture_torsionfree} for $R =
  \IZ$.

  Then the Borel Conjecture is true for closed aspherical
  manifolds of dimension $\ge 5$ with $G$ as fundamental group.  Its is true for closed aspherical
  manifolds of dimension $4$  with $G$ as fundamental group
  if $G$ is good in the sense of
  Freedman (see~\cite{Freedman(1982)}, \cite{Freedman(1983)}).
\end{lemma}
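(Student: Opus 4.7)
The plan is to use surgery theory to show that for any closed aspherical $n$-manifold $N$ with $\pi_1(N) \cong G$ and $n \ge 5$, the topological structure set $\mathcal{S}^{TOP}(N)$ consists of a single element. Since $\Wh(G) = 0$ by the hypothesis of Conjecture~\ref{con:Farrell-Jones_Conjecture_torsionfree}, every homotopy equivalence $M \to N$ from a closed manifold is automatically simple, so $\mathcal{S}^{TOP}(N)$ coincides with the simple structure set $\mathcal{S}^{TOP,s}(N)$; triviality of this set then says exactly that every such homotopy equivalence is homotopic to a homeomorphism. To deduce the Borel Conjecture in the form stated, given two closed aspherical manifolds $M$, $N$ of dimension $\ge 5$ with isomorphic fundamental groups, Theorem~\ref{the:homotopy_class_asph_CW}~\ref{the:homotopy_class_asph_CW:homotopy_equivalence} first produces a homotopy equivalence $M \to N$, which we then promote to a homeomorphism.

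The main technical input is Ranicki's reformulation of the topological surgery exact sequence of Sullivan--Wall,
\begin{equation*}
H_{n+1}(N;\bfL) \xrightarrow{A_{n+1}} L^s_{n+1}(\IZ G,w) \to \mathcal{S}^{TOP,s}(N) \to H_n(N;\bfL) \xrightarrow{A_n} L^s_n(\IZ G,w),
\end{equation*}
valid for $n \ge 5$, where $w$ is the orientation character of $N$, $\bfL$ is a suitable $L$-theory spectrum of $\IZ$, and $A_*$ is an assembly map. Since $N \simeq BG$ by asphericity, $H_*(N;\bfL) \cong H_*(BG;\bfL)$ and $A_*$ may be identified with the $L$-theoretic assembly map of Conjecture~\ref{con:Farrell-Jones_Conjecture_torsionfree}, up to the decoration of the $L$-spectrum. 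The Rothenberg exact sequences control the differences between $L^s$, $L^h$ and $L^{\langle -\infty\rangle}$ by Tate cohomology of $\Wh(G)$, $\widetilde K_0(\IZ G)$ and the lower $K$-groups $K_{-i}(\IZ G)$; all three families of groups vanish by Conjecture~\ref{con:Farrell-Jones_Conjecture_torsionfree}, so the three decorations coincide. Consequently $A_n$ and $A_{n+1}$ are isomorphisms by the $L$-theoretic part of the hypothesis, and exactness forces $\mathcal{S}^{TOP,s}(N)$ to be a singleton, as required.

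For closed aspherical $4$-manifolds the argument is identical, except that the topological surgery exact sequence in dimension $4$ is only available when $G$ is good in the sense of Freedman, which is exactly the restriction stated in the lemma. The main obstacle is not any single step but the careful decoration bookkeeping: one must verify that the $L$-theory assembly map appearing in the topological surgery sequence matches, under the Rothenberg comparisons above and the passage from a connective to a $4$-periodic $L$-spectrum, the assembly map of Conjecture~\ref{con:Farrell-Jones_Conjecture_torsionfree}, and that this identification is natural enough to push the isomorphism through to the surgery sequence in both of the relevant degrees $n$ and $n+1$.
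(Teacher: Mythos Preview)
Your approach is essentially the paper's: reduce Borel to triviality of the structure set via the surgery exact sequence, use the $K$-theoretic vanishing to pass between decorations via Rothenberg, and then invoke the $L$-theoretic assembly isomorphism. The paper writes the classical sequence with normal invariants $\caln_*(M)$ rather than Ranicki's algebraic version, but these are equivalent.

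There is one point you flag but do not actually resolve, and it is the place where something beyond formal bookkeeping happens. The assembly map appearing in the \emph{geometric} surgery sequence is the one for the $1$-connective cover $\bfL\langle 1\rangle$, not the periodic spectrum $\bfL^{\langle -\infty\rangle}$ used in Conjecture~\ref{con:Farrell-Jones_Conjecture_torsionfree}; the Rothenberg sequences handle the $s$ versus $\langle -\infty\rangle$ decoration, but not this. The paper's proof supplies the missing step: since $N$ is aspherical and $n$-dimensional, $BG \simeq N$ is an $n$-dimensional complex, and then an Atiyah--Hirzebruch argument shows that $H_k\bigl(N;\bfL\langle 1\rangle\bigr) \to H_k\bigl(N;\bfL\bigr)$ is injective for $k=n$ and an isomorphism for $k>n$. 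In particular your claim that ``$A_n$ and $A_{n+1}$ are isomorphisms'' is not quite right for the connective spectrum: $A_{n+1}$ is an isomorphism, but $A_n$ is only injective---which is still enough to force $\cals^{\topo}(N)$ to be a point. This is exactly where the dimension of $N$ (and hence asphericity, to identify $N\simeq BG$) enters the argument, so it should be made explicit rather than deferred to ``bookkeeping.''
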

\begin{proof}[Sketch of the proof]
  We treat the orientable case only.  The \emph{topological structure
    set} $\cals^{\topo}(M)$ of a closed topological manifold $M$ is
  the set of equivalence classes of homotopy equivalences $M' \to M$
  with a topological closed manifold as source and $M$ as target under
  the equivalence relation, for which $f_0 \colon M_0 \to M$ and $f_1
  \colon M_1 \to M$ are equivalent if there is a homeomorphism
  $g\colon M_0 \to M_1$ such that $f_1 \circ g$ and $f_0$ are
  homotopic.  The Borel Conjecture~\ref{con:Borel_Conjecture} for a
  group $G$ is equivalent to the statement that for every closed
  aspherical manifold $M$ with $G \cong \pi_1(M)$ its topological
  structure set $\cals^{\topo}(M)$ consists of a single element,
  namely, the class of $\id \colon M \to M$.

  The \emph{surgery sequence} of a closed orientable topological
  manifold $M$ of dimension $n \ge 5$ is the exact sequence
  \begin{multline*}
    \ldots \to \caln_{n+1}\bigl(M\times [0,1],M \times \{0,1\}\bigr)
    \xrightarrow{\sigma} L^s_{n+1}\bigl(\IZ\pi_1(M)\bigr)
    \xrightarrow{\partial} \cals^{\topo}(M)
    \\
    \xrightarrow{\eta} \caln_n(M) \xrightarrow{\sigma}
    L_n^s\bigl(\IZ\pi_1(M)\bigr),
  \end{multline*}
  which extends infinitely to the left.  It is the basic tool for the
  classification of topological manifolds.  (There is also a smooth
  version of it.)  The map $\sigma$ appearing in the sequence sends a
  normal map of degree one to its surgery obstruction.  This map can
  be identified with the version of the $L$-theory assembly map where
  one works with the $1$-connected cover $\bfL^s ( \IZ ) \langle 1
  \rangle$ of $\bfL^s( \IZ )$.  The map $H_k\bigl(M;\bfL^s
  (\IZ)\langle 1 \rangle \bigr) \to H_k\bigl(M;\bfL^s (\IZ)\bigr)$ is
  injective for $k=n$ and an isomorphism for $k >n$. Because of the
  $K$-theoretic assumptions we can replace the $s$-decoration with the
  $\langle - \infty \rangle$-decoration. Therefore the Farrell-Jones
  Conjecture implies that the maps $\sigma\colon \caln_n(M) \to
  L_n^s\bigl(\IZ\pi_1(M)\bigr)$ and $\caln_{n+1}\bigl(M\times [0,1],M
  \times \{0,1\}\bigr) \xrightarrow{\sigma}
  L^s_{n+1}\bigl(\IZ\pi_1(M)\bigr)$ are injective respectively
  bijective and thus by the surgery sequence that $\cals^{\topo}(M)$
  is a point and hence the Borel Conjecture~\ref{con:Borel_Conjecture}
  holds for $M$.  More details can be found e.g., in \cite[pages
  17,18,28]{Ferry-Ranicki-Rosenberg(1995)}, \cite[Chapter
  18]{Ranicki(1992)}.
\end{proof}

\begin{remark}[The Borel Conjecture in low dimensions]
  \label{rem:The_Borel_Conjecture_in_low_dimensions}
  The Borel Conjecture is true in dimension $\le 2$ by the
  classification of closed manifolds of dimension $2$. It is true in
  dimension $3$ if Thurston's Geometrization Conjecture is true.  This
  follows from results of Waldhausen (see Hempel~\cite[Lemma~10.1 and
  Corollary~13.7]{Hempel(1976)}) and Turaev (see~\cite{Turaev(1988)})
  as explained for instance
  in~\cite[Section~5]{Kreck-Lueck(2009nonasph)}.  A proof of Thurston's
  Geometrization Conjecture is given in~\cite{Morgan-Tian(2008)}
  following ideas of Perelman.
\end{remark}

\begin{remark}[Topological rigidity for non-aspherical manifolds]
  \label{rem:Topological_rigidity_for_non-aspherical_manifolds}
  Topological rigidity phenomenons do hold also for some
  non-aspherical closed manifolds.
  For instance the sphere $S^n$ is
  topologically rigid by the Poincar\'e Conjecture. 
  The Poincar\'e Conjecture is known to be true in all dimensions.
  This follows in high dimensions from the $h$-cobordism theorem,
  in dimension four from the work of Freedman~\cite{Freedman(1982)},
  in dimension three from the work of Perelman as explained 
  in~\cite{Kleiner-Lott(2006),Morgan+Tian(2008_Ricci_poincare)} and
  and in dimension two from the classification of surfaces.

  Many more examples of classes of manifolds which are topologically
  rigid  
  are given and analyzed in
  Kreck-L\"uck~\cite{Kreck-Lueck(2009nonasph)}.  For instance the connected
  sum of closed manifolds of dimension $\ge 5$ which are
  topologically rigid and whose fundamental groups
  do not contain elements of order two, is again topologically rigid
  and the connected sum of two manifolds is in general not aspherical
  (see~Lemma~\ref{lem:connected_sum_not_aspherical}). The product $S^k
  \times S^n$ is topologically rigid if and only if $k$ and $n$ are
  odd. An integral homology sphere of dimension $n \ge 5$ is
  topologically rigid if and only if the inclusion $\IZ \to
  \IZ[\pi_1(M)]$ induces an isomorphism of simple $L$-groups
  $L_{n+1}^s(\IZ) \to L_{n+1}^s\bigl(\IZ[\pi_1(M)]\bigr)$.
\end{remark}

\begin{remark}[The Borel Conjecture does not hold in the smooth
  category]
  \label{rem:The_Borel_Conjecture_does_not_hold_in_the_smooth_category}
  The Borel Conjecture~\ref{con:Borel_Conjecture} is false in the
  smooth category, i.e., if one replaces topological manifold by
  smooth manifold and homeomorphism by diffeomorphism. The torus $T^n$
  for $ n \ge 5$ is an example (see~\cite[15A]{Wall(1999)}).  Other
  counterexample involving negatively curved manifolds are constructed
  by Farrell-Jones~\cite[Theorem~0.1]{Farrell-Jones(1989b)}.
\end{remark}

\begin{remark}[The Borel Conjecture versus Mostow rigidity]
  \label{rem:The_Borel_Conjecture_versus_Mostow_rigidity}
  The examples of
  Farrell-Jones~\cite[Theorem~0.1]{Farrell-Jones(1989b)} give actually
  more.  Namely, it yields for given $\epsilon > 0$ a closed
  Riemannian manifold $M_0$ whose sectional curvature lies in the
  interval $[1-\epsilon,-1 + \epsilon]$ and a closed hyperbolic
  manifold $M_1$ such that $M_0$ and $M_1$ are homeomorphic but no
  diffeomorphic.  The idea of the construction is essentially to take
  the connected sum of $M_1$ with exotic spheres.  Notice that by
  definition $M_0$ were hyperbolic if we would take $\epsilon = 0$.
  Hence this example is remarkable in view of \emph{Mostow rigidity},
  which predicts for two closed hyperbolic manifolds $N_0$ and $N_1$
  that they are isometrically diffeomorphic if and only if $\pi_1(N_0)
  \cong \pi_1(N_1)$ and any homotopy equivalence $N_0 \to N_1$ is
  homotopic to an isometric diffeomorphism.

  One may view the Borel Conjecture as the topological version of
  Mostow rigidity.  The conclusion in the Borel Conjecture is weaker,
  one gets only homeomorphisms and not isometric diffeomorphisms, but
  the assumption is also weaker, since there are many more aspherical
  closed topological manifolds than hyperbolic closed manifolds.
\end{remark}

\begin{remark}[The work of Farrell-Jones]\label{rem:work_of_Farrell-Jones} 
  Farrell-Jones have made deep contributions to the Borel Conjecture.
  They have proved it in dimension $\ge 5$ for non-positively curved
  closed Riemannian manifolds, for compact complete affine flat
  manifolds and for closed aspherical manifolds whose fundamental
  group is isomorphic to the fundamental group of a complete
  non-positively curved Riemannian manifold which is A-regular
  (see~\cite{Farrell-Jones(1990),Farrell-Jones(1990b),Farrell-Jones(1993c),Farrell-Jones(1998)}).
\end{remark}

The following result is due to Bartels and
L\"uck~\cite{Bartels-Lueck(2009Borelhyp)}.
\begin{theorem}
  \label{the:status_of-Farrell-Jones}
  Let $\calc$ be the smallest class of groups satisfying:
  \begin{itemize}

  \item Every hyperbolic group belongs to $\calc$;

  \item Every group that acts properly, isometrically and cocompactly on a
    complete proper $\CAT(0)$-space belongs to $\calc$;

  \item If $G_1$ and $G_2$ belong to $\calc$, then both $G_1 \ast G_2$
    and $G_1 \times G_2$ belong to $\calc$;

  \item If $H$ is a subgroup of $G$ and $G \in \calc$, then $H \in
    \calc$;

  \item Let $\{G_i \mid i \in I\}$ be a directed system of groups
    (with not necessarily injective structure maps) such that $G_i \in
    \calc$ for every $i \in I$. Then the directed colimit $\colim_{i \in I}
    G_i$ belongs to $\calc$.

  \end{itemize}

  Then every group $G$ in $\calc$ satisfies the version of the
  Farrell-Jones Conjecture stated in
  Conjecture~\ref{con:Farrell-Jones_Conjecture_torsionfree}.
\end{theorem}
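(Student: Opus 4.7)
My plan is to prove the stronger \emph{Full Farrell-Jones Conjecture with coefficients} in both $K$- and $L$-theory (with $\VCyc$ as the family), since that version is known to pass to subgroups, to be inherited by $G_1 \times G_2$ and $G_1 \ast G_2$, and to be compatible with arbitrary directed colimits (this last inheritance uses a continuity argument on the equivariant homology side combined with the fact that $K_n(R[\colim G_i]) = \colim K_n(RG_i)$, and likewise for $L$). The torsionfree, regular-ring version stated in Conjecture~\ref{con:Farrell-Jones_Conjecture_torsionfree} is a formal consequence once $G$ is known to be torsionfree, since for torsionfree $G$ satisfying the full conjecture one can identify the assembly map with one from $BG$ and then invoke the vanishing of Nil-terms, of $\widetilde{K}_0(R)$ and $\Wh$ for regular $R$, etc. Thus the task reduces to (a) establishing the full conjecture for hyperbolic groups, (b) establishing it for groups acting properly, isometrically, cocompactly on a complete proper $\CAT(0)$-space, and (c) verifying the closure properties.

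For step (a), the plan is to exploit an action of a hyperbolic group $G$ on a \emph{flow space} built from Mineyev's symmetric join / double of $\partial G$, whose points are parametrized geodesic lines. One equips $G$ with a coarse model of this flow and verifies a dynamical condition sometimes called \emph{transfer reducibility}: for every finite $S \subset G$ and $\epsilon > 0$ one finds a compact, finite-dimensional, cocompact flow-space model together with a wide cover by long, thin $\VCyc$-subgroup-invariant sets of controlled dimension. From such data, the controlled-algebra machinery (the Bartels-Farrell-Reich-Lück axiomatic setup) produces a contracting transfer, which in turn forces the assembly map to be an isomorphism.

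Step (b), the $\CAT(0)$ case, is the main obstacle. The conceptual picture is the same, but one has to construct the flow space intrinsically from the $\CAT(0)$-geometry (generalized geodesics on $X$), and verify the analogous dynamical long-and-thin cover property. The difficulties compared with the hyperbolic case are that geodesics no longer diverge exponentially, that the flow is only continuous and not expansive, and that cocompactness of the flow space has to be replaced by an $N$-cocompactness statement. One constructs the covers by carefully combining a box decomposition transverse to the flow direction with equivariant partitions induced from virtually cyclic stabilizers, and this is where the bulk of the technical work sits. Once these covers are produced one feeds them into the same controlled-algebra engine used in (a).

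For step (c), subgroups: a subgroup $H \le G$ has an induced action on the same flow space, and the dynamical cover condition passes to $H$ because $\VCyc$ is closed under intersection with subgroups. Free and direct products are handled by building a flow space for $G_1 \ast G_2$ from the Bass-Serre tree together with the flows for the factors, and for $G_1 \times G_2$ as the product of the two flow spaces; the controlled-algebra axioms are multiplicative in a suitable sense. Finally, for a directed colimit $G = \colim_i G_i$, one uses the fibered/with-coefficients form of the conjecture established for each $G_i$ together with the fact that the equivariant homology theory $H_n^?(E_{\VCyc}(-);\bfK_R)$ and its $L$-theory analogue both commute with directed colimits, so that the colimit assembly map is the colimit of isomorphisms and hence itself an isomorphism, completing the closure of $\calc$ under the last bullet.
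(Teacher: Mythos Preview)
The paper does not give its own proof of this theorem; it is quoted as a result of Bartels--L\"uck~\cite{Bartels-Lueck(2009Borelhyp)} (building on Bartels--L\"uck--Reich~\cite{Bartels-Lueck-Reich(2008hyper)} for the hyperbolic $K$-theory case and Bartels--Echterhoff--L\"uck~\cite{Bartels-Echterhoff-Lueck(2008colim)} for colimits). Your outline is an accurate high-level summary of the strategy in those references: prove the \emph{full} Farrell--Jones Conjecture with coefficients in additive categories for hyperbolic and $\CAT(0)$ groups via transfer reducibility over a flow space, then invoke the known formal inheritance properties of that strong version, and finally specialize to the torsionfree/regular-ring statement of Conjecture~\ref{con:Farrell-Jones_Conjecture_torsionfree}.

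Two small corrections to step~(c). The passage to subgroups is \emph{not} done geometrically by restricting the flow-space action (the $H$-action would typically fail to be cocompact); rather, it is a purely formal feature of the fibered/with-coefficients formulation: one restricts the coefficient category along $H \hookrightarrow G$ and identifies the $H$-assembly map as a direct summand of the $G$-assembly map with twisted coefficients. Likewise, the inheritance for $G_1 \times G_2$ in the cited papers is not obtained by literally taking a product of flow spaces but by an algebraic argument that reduces the product case to the conjecture for $G_1$ with coefficients enriched by $G_2$, and then for $G_2$; your picture is morally right but the actual mechanism is on the controlled-algebra side. With these adjustments, your plan matches the literature.
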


\begin{remark}[Exotic closed aspherical manifolds]
\label{rem:exotic_closed_aspherical_manifolds]}  
  Theorem~\ref{the:status_of-Farrell-Jones} implies that the exotic
  aspherical manifolds mentioned in
  Subsection~\ref{subsec:exotic_asphercial_manifolds} satisfy the
  Borel Conjecture in dimension $\ge 5$ since their universal
  coverings are $\CAT(0)$-spaces. 
\end{remark}

\begin{remark}[Directed colimits of hyperbolic groups]
  \label{rem:directed_colimits_of_hyperbolic_groups}
  There are also a variety of interesting
  groups such as \emph{lacunary groups} in the sense of
  Olshanskii-Osin-Sapir~\cite{Olshanskii-Osin-Sapir(2007)} or
  \emph{groups with expanders} as they appear in the
  counterexample to the \emph{Baum-Connes Conjecture with coefficients} due
  to Higson-Lafforgue-Skandalis~\cite{Higson-Lafforgue-Skandalis(2002)}
  and which have been constructed by 
  Arzhantseva-Delzant~\cite[Theorem~7.11 and Theorem~7.12]{Arzhantseva-Delzant(2008)}.
  Since these arise as colimits of directed systems of hyperbolic groups, they
  do satisfy the Farrell-Jones Conjecture and the Borel Conjecture in dimension
  $\ge 5$ by Theorem~\ref{the:status_of-Farrell-Jones}.

  The \emph{Bost Conjecture} has also been proved for colimits of hyperbolic groups
  by Bartels-Echterhoff-L\"uck~\cite{Bartels-Echterhoff-Lueck(2008colim)}.
\end{remark}

The original source for the (Fibered) Farrell-Jones Conjecture is the
paper by Farrell-Jones~\cite[1.6 on page~257 and~1.7 on
page~262]{Farrell-Jones(1993a)}.  The $C^*$-analogue of the
Farrell-Jones Conjecture is the Baum-Connes Conjecture whose
formulation can be found in \cite[Conjecture 3.15 on page
254]{Baum-Connes-Higson(1994)}.  For more information about the
Baum-Connes Conjecture and the Farrell-Jones Conjecture and literature
about them we refer for instance to the survey
article~\cite{Lueck-Reich(2005)}.


\typeout{--------------------   Section 5 ---------------------------------------}

\section{Poincar\' e duality groups}
\label{sec:Poincare_duality_groups}

The following definition is due to Johnson-Wall~\cite{Johnson+Wall(1972)}.
\begin{definition}[Poincar\'e duality group]
\label{def:Poincare-duality_group}

A group $G$ is called a \emph{Poincar\'e duality group of dimension $n$}  
if the following conditions holds:
\begin{enumerate}
\item The group $G$ is of type FP, i.e., the trivial $\IZ G$-module $\IZ$ 
possesses a finite-dimensional
projective $\IZ G$-resolution by finitely generated projective $\IZ G$-modules;

\item We get an isomorphism of abelian groups
$$H^i(G;\IZ G) \cong 
 \left\{
 \begin{array}{ll}
 \{0\} & \text{for}\;  i \not= n;
 \\
 \IZ & \text{for}\;  i = n.
\end{array} \right.
$$
\end{enumerate}
\end{definition}

The next definition is due to Wall~\cite{Wall(1967)}.
Recall that a $CW$-complex $X$ is called \emph{finitely dominated}
if there exists a finite $CW$-complex $Y$ and 
maps $i \colon X \to Y$ and $r \colon Y \to X$ with $r \circ i \simeq \id_X$.

\begin{definition}[Poincar\'e complex]
\label{definion:Poincare_complex}
Let $X$ be a finitely dominated connected $CW$-complex with
fundamental group $\pi$. 

It is called a \emph{Poincar\'e complex of dimension $n$} 
if there exists an orientation  homomorphism $w \colon \pi \to \{\pm 1\}$ 
and an element  
$$[X] \in H_n^{\pi}(\widetilde{X};^w\IZ) = 
H_n\bigl(C_*(\widetilde{X}) \otimes_{\IZ \pi} {^w\IZ}\bigr)$$
in the $n$-th
$\pi$-equivariant homology of its universal covering $\widetilde{X}$
with coefficients in the $\IZ G$-module $^w\IZ$, 
such that the up to $\IZ \pi$-chain homotopy equivalence
unique $\IZ\pi$-chain map
$$- \cap [X] \colon C^{n-*}(\widetilde{X}) = \hom_{\IZ \pi}\bigl(C_{n-*}(\widetilde{X}),\IZ \pi\bigr) 
 \to C_*(\widetilde{X})$$
is a $\IZ \pi$-chain homotopy equivalence. Here $^w\IZ$ is the $\IZ
G$-module, whose underlying abelian group is $\IZ$ and on 
which $g \in \pi$ acts by multiplication with $w(g)$.

If in addition $X$ is a finite $CW$-complex, we call 
$X$ a \emph{finite Poincar\'e duality complex of dimension $n$}.
\end{definition}

A topological space $X$ is called an \emph{absolute neighborhood retract} or briefly
$\ANR$ if for every normal space $Z$, every closed subset $Y \subseteq Z$ and 
every (continuous) map $f \colon Y \to X$ there exists 
an open neighborhood $U$ of $Y$ in $Z$ together with an 
extension $F \colon U \to Z$ of $f$ to $U$.  A \emph{compact $n$-dimensional 
homology $\ANR$-manifold $X$} is a compact absolute neighborhood retract
such that it has a countable basis for its topology, has finite topological dimension
and for every $x \in X$ the abelian group $H_i(X,X-\{x\})$ is trivial
for $i \not= n$ and infinite cyclic for $i = n$. A closed $n$-dimensional
topological manifold is an example of a compact
$n$-dimensional homology $\ANR$-manifold 
(see~\cite[Corollary 1A in V.26 page~191]{Daverman(1986)}). 

\begin{theorem}[Homology $\ANR$-manifolds and finite Poincar\'e complexes]
\label{the:manifolds_and_Poincare}
Let $M$ be a closed topological manifold, or more generally, a compact
homology $\ANR$-manifold of dimension $n$. Then $M$ is homotopy
equivalent to a finite $n$-dimensional Poincar\'e complex.
\end{theorem}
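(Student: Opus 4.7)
The plan is to verify separately the two requirements of Definition~\ref{definion:Poincare_complex}: first that $M$ has the homotopy type of a finite $CW$-complex (in particular is finitely dominated), and second that there exist an orientation homomorphism $w \colon \pi_1(M) \to \{\pm 1\}$ and a fundamental class $[M]$ for which the cap product $- \cap [M]$ is a $\IZ\pi$-chain homotopy equivalence $C^{n-*}(\widetilde{M}) \to C_*(\widetilde{M})$.

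For the first requirement I would invoke West's theorem, which asserts that every compact absolute neighborhood retract has the homotopy type of a finite $CW$-complex. Since a compact homology $\ANR$-manifold is by definition a compact $\ANR$, West's theorem produces a homotopy equivalence $h \colon M \xrightarrow{\simeq} K$ with $K$ a finite $CW$-complex; this already makes $M$ finitely dominated, and after transport along $h$ it reduces all chain-level statements to statements about the cellular chain complex $C_*(\widetilde{K})$, a bounded, finitely generated free $\IZ\pi$-chain complex.

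For the second requirement, the local homology groups $H_i(M, M - \{x\})$ furnish an orientation sheaf on $M$; this sheaf corresponds to an orientation character $w \colon \pi_1(M) \to \{\pm 1\}$ and produces a fundamental class $[M] \in H_n(M;{}^w\IZ)$. Classical Poincar\'e-Lefschetz duality for compact homology $\ANR$-manifolds, proved via \v{C}ech cohomology (see e.g.~\cite{Daverman(1986)}), asserts that cap product with $[M]$ induces isomorphisms on (co)homology with arbitrary local coefficients. Lifting to the universal cover and transferring via $h$ to $K$, one obtains a $\IZ\pi$-chain map $- \cap [K] \colon C^{n-*}(\widetilde{K}) \to C_*(\widetilde{K})$ between bounded, finitely generated free $\IZ\pi$-chain complexes which induces isomorphisms on all twisted homology groups; a standard Whitehead-type argument then upgrades this to a $\IZ\pi$-chain homotopy equivalence, which is what the definition of a finite Poincar\'e complex demands.

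The main obstacle is the passage from the topological manifold case (where triangulability in high dimensions, handle decomposition theory, and direct chain-level constructions make the duality essentially classical) to the general homology $\ANR$-manifold case, where no intrinsic $CW$-structure on $M$ itself is available. The crucial ingredients are West's theorem, which supplies a finite $CW$-model, and the \v{C}ech-style Poincar\'e duality for $\ANR$ homology manifolds, which must be installed at the chain level on $\widetilde{K}$. Verifying that the abstract duality isomorphism is realized by cap product with a single distinguished class (up to sign and chain homotopy) is the subtlest point, and is where the hypothesis that $M$ be a homology $\ANR$-manifold — rather than merely a generalized homology manifold — is really used.
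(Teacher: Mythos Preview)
Your proposal is correct and follows essentially the same approach as the paper: invoke West's theorem (together with Kirby--Siebenmann) to obtain a finite $CW$-model, and then observe that the standard proof of Poincar\'e duality extends to homology $\ANR$-manifolds. The paper's own proof is in fact much terser than yours---it simply cites West and then asserts that ``the usual proof of Poincar\'e duality for closed manifolds carries over to homology manifolds''---so your discussion of the orientation sheaf, the \v{C}ech-theoretic duality, and the chain-level Whitehead argument amounts to unpacking what the paper leaves implicit.
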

\begin{proof}
A closed topological manifold, and more generally a compact $\ANR$, has the homotopy 
type of a finite $CW$-complex (see~\cite[Theorem~2.2]{Kirby-Siebenmann(1977)}.
\cite{West(1977)}). The usual proof of Poincar\'e duality for closed
manifolds carries over to homology manifolds.
\end{proof}

\begin{theorem}[Poincar\'e duality groups] 
\label{the:Poincare_duality_groups_versus_Poincare_complexes}
Let $G$ be a group and $n \ge 1$ be an integer. Then:

\begin{enumerate}

\item \label{the:Poincare_duality_groups_versus_Poincare_complexes:G_to_BG}
The following assertions are equivalent:
\begin{enumerate}

\item  \label{the:Poincare_duality_groups_versus_Poincare_complexes:G_to_BG:(1)}
$G$ is finitely presented and a Poincar\'e duality group of dimension $n$;

\item  \label{the:Poincare_duality_groups_versus_Poincare_complexes:G_to_BG:(2)}
There exists an $n$-dimensional aspherical Poincar\'e complex with $G$ as 
fundamental group;

\end{enumerate}

\item \label{the:Poincare_duality_groups_versus_Poincare_complexes:G_to_BG_K_0(ZG)}
Suppose that $\widetilde{K}_0(\IZ G) = 0$. Then the following assertions are equivalent:
\begin{enumerate}

\item  \label{the:Poincare_duality_groups_versus_Poincare_complexes:G_to_BG_K_0(ZG):(1)}
$G$ is finitely presented and a Poincar\'e duality group of dimension $n$;

\item  \label{the:Poincare_duality_groups_versus_Poincare_complexes:G_to_BG_K_0(ZG):(2)}
There exists a finite $n$-dimensional aspherical Poincar\'e complex with $G$ as 
fundamental group;

\end{enumerate}

\item \label{the:Poincare_duality_groups_versus_Poincare_complexes:dim_1}
A group $G$ is a Poincar\'e duality group of dimension $1$ if and only if $G \cong \IZ$;

\item \label{the:Poincare_duality_groups_versus_Poincare_complexes:dim_2}
A group $G$ is a Poincar\'e duality group of dimension $2$ if and only if $G$ is isomorphic
to the fundamental group of a closed aspherical surface;

\end{enumerate}
\end{theorem}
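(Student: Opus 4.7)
The plan is to prove the four assertions by translating between group-theoretic and space-level Poincar\'e duality, and then invoking classical structure theorems in the low-dimensional cases (iii) and (iv).

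For~(i), I would set up the bridge between $G$ and a model for $BG$. For $(b) \Rightarrow (a)$: given an $n$-dimensional aspherical Poincar\'e complex $X$ with $\pi_1(X) \cong G$, note that $X \simeq BG$ is finitely dominated, which forces $G$ to be finitely presented. The cellular $\IZ G$-chain complex $C_*(\widetilde{X})$ of the universal cover is then a finite-dimensional complex of finitely generated projective $\IZ G$-modules resolving $\IZ$, so $G$ is of type~FP. The Poincar\'e duality isomorphism $-\cap[X]$, together with the identification $H^i(G;\IZ G) \cong H^i\bigl(\hom_{\IZ G}(C_*(\widetilde{X}),\IZ G)\bigr)$, yields the required cohomological computation. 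For $(a) \Rightarrow (b)$: build a model for $BG$ starting from a finite $2$-complex realising the finite presentation of $G$ and inductively attach cells to kill the higher homotopy groups, using the FP hypothesis (together with Wall's results on $K(G,1)$-models) to arrange that the construction is finite-dimensional and finitely dominated. Algebraic Poincar\'e duality at the group level then transfers to $X$ via $H_*(G;M) \cong H_*\bigl(C_*(\widetilde{X}) \otimes_{\IZ G} M\bigr)$, producing both the orientation homomorphism $w$ and the fundamental class $[X]$.

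For~(ii), the additional input is Wall's finiteness obstruction $o(X) \in \widetilde{K}_0(\IZ\pi_1(X))$ attached to a finitely dominated $CW$-complex $X$: this obstruction vanishes if and only if $X$ is homotopy equivalent to a finite $CW$-complex. Under the hypothesis $\widetilde{K}_0(\IZ G) = 0$, the finitely dominated model for $BG$ produced in~(i) upgrades to a finite one. The converse direction is immediate, since any finite complex is in particular finitely dominated.

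For~(iii), the direction $(\Leftarrow)$ is witnessed by $B\IZ = S^1$. For the converse, a $1$-dimensional Poincar\'e duality group $G$ has $\cd(G) = 1$, so the Stallings-Swan theorem applies and $G$ is free. The only free group with $H^1(G;\IZ G) \cong \IZ$ is $\IZ$ itself, since any free group of rank $\ge 2$ has $H^1(G;\IZ G)$ of infinite rank. For~(iv), the direction $(\Leftarrow)$ is immediate: a closed aspherical surface is itself a finite $2$-dimensional aspherical Poincar\'e complex whose fundamental group is a $2$-dimensional Poincar\'e duality group. The converse --- every $2$-dimensional Poincar\'e duality group is a surface group --- is the main obstacle of the entire theorem. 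Here I would invoke the black-box result of Eckmann-M\"uller (orientable case) and Eckmann-Linnell (non-orientable case), whose proof proceeds via an end-theoretic analysis of groups of cohomological dimension~$2$ together with splitting theorems over infinite cyclic subgroups, and which lies well outside the translation machinery used in parts~(i)--(iii).
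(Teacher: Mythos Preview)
Your proposal is correct and follows essentially the same route as the paper: the translation between Poincar\'e duality groups and aspherical Poincar\'e complexes via Wall's results and Eilenberg--Ganea for part~(i), Wall's finiteness obstruction for part~(ii), Stallings--Swan for part~(iii), and the Eckmann--M\"uller/Eckmann--Linnell theorem as a black box for part~(iv).

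One organizational point worth tightening: in your sketch of $(a)\Rightarrow(b)$ in part~(i) you invoke Wall's construction of an $n$-dimensional finitely dominated $K(G,1)$, but this requires $n\ge 3$ (the Eilenberg--Ganea range); starting from a finite $2$-complex and attaching cells will never produce a $1$-dimensional model, and for $n=2$ the existence of a $2$-dimensional $BG$ from $\cd(G)=2$ alone is the open Eilenberg--Ganea problem. The paper handles this explicitly by deferring the cases $n=1,2$ of $(a)\Rightarrow(b)$ to parts~(iii) and~(iv), which in those dimensions produce $BG$ as a genuine closed manifold. You have all the pieces, but you should make this dependency explicit rather than folding it into the general inductive construction. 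Your argument for~(iii) via $H^1(G;\IZ G)$ and ends is a valid alternative to the paper's use of ordinary Poincar\'e duality $H^1(BF_r)\cong H_0(BF_r)$.
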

\begin{proof}%
\ref{the:Poincare_duality_groups_versus_Poincare_complexes:G_to_BG}
Every finitely dominated $CW$-complex has a finitely presented fundamental group
since every finite $CW$-complex has a finitely presented group and a group
which is a retract of a finitely presented group is again finitely 
presented~\cite[Lemma~1.3]{Wall(1965a)}. If there exists a
$CW$-model for $BG$ of dimension $n$, then the cohomological dimension of $G$ satisfies
$\cd(G) \le n$ and the converse is true provided that $n \ge 3$
(see~\cite[Theorem~7.1 in Chapter VIII.7 on page~205]{Brown(1982)},
\cite{Eilenberg+Ganea(1957)}, \cite{Wall(1965a)}, \cite{Wall(1966)}).
This implies that the implication
$\ref{the:Poincare_duality_groups_versus_Poincare_complexes:G_to_BG:(2)}%
\implies%
\ref{the:Poincare_duality_groups_versus_Poincare_complexes:G_to_BG:(1)}$ holds for all 
$n \ge 1$ and that the implication
$\ref{the:Poincare_duality_groups_versus_Poincare_complexes:G_to_BG:(1)}%
\implies%
\ref{the:Poincare_duality_groups_versus_Poincare_complexes:G_to_BG:(2)}$ 
holds for $n \ge 3$. For more details we 
refer to~\cite[Theorem~1]{Johnson+Wall(1972)}. 
The remaining part to show the implication
$\ref{the:Poincare_duality_groups_versus_Poincare_complexes:G_to_BG:(1)}%
\implies%
\ref{the:Poincare_duality_groups_versus_Poincare_complexes:G_to_BG:(2)}$ 
for $n = 1,2$ follows from 
assertions~\ref{the:Poincare_duality_groups_versus_Poincare_complexes:dim_1}
and~\ref{the:Poincare_duality_groups_versus_Poincare_complexes:dim_2}.
\\[1mm]%
\ref{the:Poincare_duality_groups_versus_Poincare_complexes:G_to_BG_K_0(ZG)}
This follows in dimension $n \ge 3$ from 
assertion~\ref{the:Poincare_duality_groups_versus_Poincare_complexes:G_to_BG}
and Wall's results about the finiteness obstruction which decides
whether a finitely dominated $CW$-complex is homotopy equivalent to a finite $CW$-complex
and takes values in $\widetilde{K}_0(\IZ \pi)$
(see~\cite{Ferry-Ranicki(2001),Mislin(1995),Wall(1965a),Wall(1966)}).  The implication
$\ref{the:Poincare_duality_groups_versus_Poincare_complexes:G_to_BG_K_0(ZG):(2)}~%
\implies~%
\ref{the:Poincare_duality_groups_versus_Poincare_complexes:G_to_BG_K_0(ZG):(1)}$ 
holds for all  $n \ge 1$. The remaining part to show the implication
$\ref{the:Poincare_duality_groups_versus_Poincare_complexes:G_to_BG_K_0(ZG):(1)}~%
\implies~%
\ref{the:Poincare_duality_groups_versus_Poincare_complexes:G_to_BG_K_0(ZG):(2)}$ holds
follows from assertions~\ref{the:Poincare_duality_groups_versus_Poincare_complexes:dim_1}
and~\ref{the:Poincare_duality_groups_versus_Poincare_complexes:dim_2}.
\\[1mm]%
\ref{the:Poincare_duality_groups_versus_Poincare_complexes:dim_1}
Since $S^1 = B\IZ$ is a $1$-dimensional closed manifold, $\IZ$ is a finite 
Poincare duality group of dimension $1$ by 
Theorem~\ref{the:manifolds_and_Poincare}.
We conclude from the (easy) implication
$\ref{the:Poincare_duality_groups_versus_Poincare_complexes:G_to_BG:(2)}\implies%
\ref{the:Poincare_duality_groups_versus_Poincare_complexes:G_to_BG:(1)}$ 
appearing in assertion~\ref{the:Poincare_duality_groups_versus_Poincare_complexes:G_to_BG}
that $\IZ$ is a Poincar\'e duality group of dimension $1$.
Suppose that $G$ is a Poincar\'e duality group of dimension $1$.
Since the cohomological dimension of $G$ is $1$,
it has to be a free group (see~\cite{Stallings(1968),Swan(1969)}).
Since the homology group of a group of type FP is finitely generated,
$G$ is isomorphic to a finitely generated free group
$F_r$ of rank $r$. Since $H^1(BF_r) \cong \IZ^r$ and $H_0(BF_r) \cong \IZ$, 
Poincar\'e duality can only hold for $r = 1$, i.e., $G$ is $\IZ$.
\\[1mm]%
\ref{the:Poincare_duality_groups_versus_Poincare_complexes:dim_2}
This is proved in~\cite[Theorem~2]{Eckmann-Linnell(1983)}.
See also~\cite{Bieri+Eckmann(1973),Bieri+Eckmann(1974),Eckmann(1987),Eckmann+Mueller(1980)}.
\end{proof}

\begin{conjecture}[Aspherical Poincar\'e complexes]
 \label{con:aspherical_Poincare-Complexes}
Every finite  Poincar\'e complex is homotopy equivalent to a closed manifold.
\end{conjecture}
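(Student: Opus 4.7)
My plan is to combine Ranicki's total surgery obstruction machinery with the Farrell-Jones Conjecture, interpreting the statement as referring to aspherical finite Poincar\'e complexes (since Wall produced finite Poincar\'e complexes not homotopy equivalent to manifolds in the non-aspherical setting). Given a finite aspherical Poincar\'e complex $X = BG$ of dimension $n \ge 5$, Ranicki has defined a \emph{total surgery obstruction} $s(X) \in \cals_n(X)$ in his algebraic structure group, with the property that $s(X) = 0$ is equivalent, modulo the Quinn resolution obstruction, to $X$ being homotopy equivalent to a closed topological manifold. The group $\cals_n(X)$ sits in the algebraic surgery exact sequence
$$\cdots \to H_n\bigl(BG;\bfL^{\langle -\infty \rangle}\bigr) \xrightarrow{A} L_n^{\langle -\infty \rangle}(\IZ G) \to \cals_n(X) \to H_{n-1}\bigl(BG;\bfL^{\langle -\infty \rangle}\bigr) \to \cdots$$
in which $A$ is precisely the $L$-theoretic assembly map appearing in Conjecture~\ref{con:Farrell-Jones_Conjecture_torsionfree}.

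First I would invoke the $L$-theory part of Conjecture~\ref{con:Farrell-Jones_Conjecture_torsionfree} to deduce that $A$ is an isomorphism, which forces $\cals_n(X) = 0$ and hence $s(X) = 0$. The $K$-theoretic assumptions (vanishing of $\widetilde{K}_0(\IZ G)$ and $\Wh(G)$, plus the negative $K$-theory statement) are needed both to keep the finiteness obstruction of $X$ trivial (so that one stays in the world of honest finite complexes, cf.\ Theorem~\ref{the:Poincare_duality_groups_versus_Poincare_complexes}) and to perform the decoration change that identifies the algebraic assembly map with the geometric surgery assembly. Combined with Theorem~\ref{the:status_of-Farrell-Jones}, this would settle the conjecture for every $X = BG$ with $G \in \calc$, at least up to producing a compact ANR-homology manifold in the homotopy type of $X$.

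The main obstacle is the passage from an ANR-homology manifold to a bona fide topological manifold: Quinn's resolution obstruction $i(M) \in 1 + 8\IZ$ must equal $1$, and there is no known mechanism to force this in the aspherical setting, even though no counterexample exists either. I expect this to be where the real difficulty sits, and it is plausible that a positive answer requires new ideas specific to the aspherical case (e.g.\ exploiting contractibility of $\widetilde{X}$ to rule out the exotic homology-manifold phenomena produced by Bryant--Ferry--Mio--Weinberger). A secondary issue is dimension: the cases $n \le 4$ fall outside the surgery exact sequence and must be handled separately, using the classification of surfaces for $n \le 2$, and for $n = 3$ combining Theorem~\ref{the:Poincare_duality_groups_versus_Poincare_complexes} with Thurston's Geometrization to reduce to the case of a $3$-manifold group. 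Finally, settling the conjecture in full generality would require establishing the Farrell-Jones Conjecture for every finitely presented Poincar\'e duality group, well beyond the class $\calc$, which is itself a major open problem and inseparable from the question of whether every Poincar\'e duality group is the fundamental group of an aspherical closed manifold.
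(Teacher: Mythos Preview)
Your proposal is not a proof but a proof \emph{strategy}, and this is appropriate: the statement is an open conjecture, and the paper does not prove it either. What the paper does provide is exactly the partial progress you outline. Lemma~\ref{rem:Borel_FJC_Poincare_complexes} establishes that under the Farrell--Jones Conjecture a finite aspherical Poincar\'e complex of dimension $\ge 6$ is homotopy equivalent to a compact homology $\ANR$-manifold with the disjoint disk property, via precisely the total surgery obstruction and assembly-map argument you describe. Remark~\ref{rem:homology_ANR-manifolds_versus_topological_manifolds} and Question~\ref{que:Vanishing_of_the_resolution_obstruction_in_the_aspherical_case} then isolate Quinn's resolution obstruction as the remaining gap, exactly as you do, and the paper explicitly leaves this as an open question rather than resolving it.

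So your analysis is correct and matches the paper's treatment essentially line for line: Farrell--Jones forces the algebraic structure group to vanish, the $K$-theory assumptions handle finiteness and decorations, one lands on a homology $\ANR$-manifold, and the resolution obstruction is the genuine obstacle with no known mechanism to kill it in the aspherical case. Your remarks on low dimensions and on the Farrell--Jones Conjecture being open beyond the class $\calc$ are also in line with the paper's discussion. There is nothing to correct; you have accurately identified both what is known and where the real difficulty lies.
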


\begin{conjecture}[Poincar'e duality groups]
 \label{con:Poincare-duality-groups}
A finitely presented 
group is a $n$-dimensional Poincar\'e duality group if and only if
it is the fundamental group of a closed $n$-dimensional topological manifold.
\end{conjecture}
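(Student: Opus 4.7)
The strategy is to bridge the conjecture by combining Theorem~\ref{the:Poincare_duality_groups_versus_Poincare_complexes}, Conjecture~\ref{con:aspherical_Poincare-Complexes} and the $K$-theoretic part of the Farrell-Jones Conjecture (Conjecture~\ref{con:Farrell-Jones_Conjecture_torsionfree}), so that in effect the assertion is reduced to those two open conjectures plus an asphericity question.

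For the direction asserting that a finitely presented Poincar\'e duality group $G$ of dimension $n$ is realized by a closed $n$-manifold, I would proceed in three steps. First, apply Theorem~\ref{the:Poincare_duality_groups_versus_Poincare_complexes}~\ref{the:Poincare_duality_groups_versus_Poincare_complexes:G_to_BG} to obtain an aspherical $n$-dimensional Poincar\'e complex $X$ with $\pi_1(X) \cong G$. A priori $X$ is only finitely dominated, and Wall's finiteness obstruction lies in $\widetilde{K}_0(\IZ G)$. Second, I would invoke the Farrell-Jones Conjecture~\ref{con:Farrell-Jones_Conjecture_torsionfree}, which gives $\widetilde{K}_0(\IZ G) = 0$ and in particular holds whenever $G$ lies in the large class $\calc$ of Theorem~\ref{the:status_of-Farrell-Jones}; Theorem~\ref{the:Poincare_duality_groups_versus_Poincare_complexes}~\ref{the:Poincare_duality_groups_versus_Poincare_complexes:G_to_BG_K_0(ZG)} then upgrades $X$ to a \emph{finite} aspherical Poincar\'e complex of dimension $n$. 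Third, Conjecture~\ref{con:aspherical_Poincare-Complexes} converts this finite Poincar\'e complex into a closed $n$-dimensional topological manifold $M$ homotopy equivalent to $X$, so that $\pi_1(M) \cong G$ as required.

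For the reverse direction, given a closed $n$-manifold $M$ with $\pi_1(M) \cong G$, I would try to establish that $M$ is necessarily aspherical; once asphericity is in hand, Theorem~\ref{the:manifolds_and_Poincare} shows that $M$ is a finite aspherical Poincar\'e complex of dimension $n$, and the implication \ref{the:Poincare_duality_groups_versus_Poincare_complexes:G_to_BG:(2)}$\Rightarrow$\ref{the:Poincare_duality_groups_versus_Poincare_complexes:G_to_BG:(1)} of Theorem~\ref{the:Poincare_duality_groups_versus_Poincare_complexes}~\ref{the:Poincare_duality_groups_versus_Poincare_complexes:G_to_BG} identifies $G$ as a finitely presented Poincar\'e duality group of dimension $n$.

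The main obstacle, of course, is that the entire plan rests on unresolved conjectures, and the conjecture at hand is essentially an amalgamation of those obstacles rather than an independent result. The surgery-theoretic question inside Conjecture~\ref{con:aspherical_Poincare-Complexes}---whether the total surgery obstruction of an aspherical finite Poincar\'e complex always vanishes---is itself the deepest piece and is not addressed by the Farrell-Jones package alone; indeed, even knowing that the assembly maps of Conjecture~\ref{con:Farrell-Jones_Conjecture_torsionfree} are isomorphisms only ensures uniqueness up to homeomorphism of any manifold model via Lemma~\ref{lem:FJC_implies_BC}, not its existence. The auxiliary asphericity step in the reverse direction is also genuinely hard: a closed $n$-manifold whose fundamental group satisfies $n$-dimensional Poincar\'e duality could in principle carry higher homotopy, and ruling this out amounts to controlling the degree and $n$-connectivity of the classifying map $M \to BG$ without external geometric input. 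Any complete proof will therefore require substantial new ideas precisely at these two junctures.
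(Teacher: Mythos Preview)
This statement is a \emph{conjecture}, not a theorem, and the paper does not attempt to prove it. The only content the paper supplies is the one-line observation immediately following the statement: by Theorem~\ref{the:manifolds_and_Poincare} and Theorem~\ref{the:Poincare_duality_groups_versus_Poincare_complexes}~\ref{the:Poincare_duality_groups_versus_Poincare_complexes:G_to_BG}, Conjecture~\ref{con:Poincare-duality-groups} is equivalent to Conjecture~\ref{con:aspherical_Poincare-Complexes}. So there is no proof to match against; at most one can compare your reduction to the paper's.

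Your reduction is in the same spirit but is more careful on one point. You correctly note that Theorem~\ref{the:Poincare_duality_groups_versus_Poincare_complexes}~\ref{the:Poincare_duality_groups_versus_Poincare_complexes:G_to_BG} only produces a finitely dominated aspherical Poincar\'e complex, whereas Conjecture~\ref{con:aspherical_Poincare-Complexes} (as stated) concerns \emph{finite} ones; you close this gap by invoking the $K$-theoretic vanishing $\widetilde{K}_0(\IZ G)=0$ from Conjecture~\ref{con:Farrell-Jones_Conjecture_torsionfree} and then Theorem~\ref{the:Poincare_duality_groups_versus_Poincare_complexes}~\ref{the:Poincare_duality_groups_versus_Poincare_complexes:G_to_BG_K_0(ZG)}. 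The paper's terse equivalence claim glosses over this finiteness issue, so your extra step is a genuine refinement, not a detour.

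On the reverse direction, your worry about establishing asphericity of $M$ is well taken, but it points to a defect in the \emph{formulation} rather than to a step one must carry out. As literally written (without ``aspherical''), the ``if'' direction is simply false: for instance $\IZ/2 \cong \pi_1(\IR\IP^n)$ is not a Poincar\'e duality group of any dimension. The intended statement, standard in the literature and implicit in the paper's equivalence with Conjecture~\ref{con:aspherical_Poincare-Complexes}, is that $G$ is a $\mathrm{PD}^n$-group if and only if it is the fundamental group of a closed \emph{aspherical} $n$-manifold. With that hypothesis in place the reverse direction is immediate from Theorem~\ref{the:manifolds_and_Poincare} together with the easy implication in Theorem~\ref{the:Poincare_duality_groups_versus_Poincare_complexes}~\ref{the:Poincare_duality_groups_versus_Poincare_complexes:G_to_BG}, and no separate ``asphericity step'' is needed.
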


Because of  Theorem~\ref{the:manifolds_and_Poincare} and 
Theorem~\ref{the:Poincare_duality_groups_versus_Poincare_complexes}~%
\ref{the:Poincare_duality_groups_versus_Poincare_complexes:G_to_BG},
Conjecture~\ref{con:aspherical_Poincare-Complexes} 
and Conjecture~\ref{con:Poincare-duality-groups} 
are equivalent.

The  \emph{disjoint disk property} says that for any $\epsilon > 0$ and maps
$f,g \colon D^2\to M$ there are maps $f',g' \colon D^2 \to M$ so that
the distance between $f$ and $f'$ and the distance between $g$ and $g'$ are bounded by $\epsilon$
and $f'(D^2) \cap g'(D^2) = \emptyset$.

\begin{lemma}\label{rem:Borel_FJC_Poincare_complexes}
Suppose that the torsionfree group $G$ and the ring $R = \IZ$
satisfy the version of the 
Farrell-Jones Conjecture stated in 
Theorem~\ref{con:Farrell-Jones_Conjecture_torsionfree}.
Let $X$ be a Poincar\'e complex of dimension $\ge 6$ with $\pi_1(X) \cong G$.
Then $X$ is homotopy equivalent to a compact homology  $\ANR$-manifold
satisfying the disjoint disk property.
\end{lemma}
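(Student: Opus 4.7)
The overall plan is to reduce to the recognition theorem of Bryant--Ferry--Mio--Weinberger (BFMW) for compact ANR-homology manifolds with the disjoint disk property, and then use the $L$-theoretic part of Conjecture~\ref{con:Farrell-Jones_Conjecture_torsionfree} to force the relevant total surgery obstruction to vanish. In the BFMW framework, a finite Poincar\'e complex of dimension $n\ge 6$ is homotopy equivalent to a compact ANR-homology $n$-manifold with DDP if and only if Ranicki's total surgery obstruction $s^H(X)$ in an ANR-analogue of the topological structure set vanishes, and this obstruction is controlled by the assembly map
$$H_n\bigl(X;\bfL^{\langle -\infty\rangle}\bigr)\longrightarrow L_n^{\langle -\infty\rangle}(\IZ G,w).$$

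As a preliminary reduction I would use the $K$-theoretic parts of the hypothesis. The vanishing of $\widetilde{K}_0(\IZ G)$ combined with Wall's finiteness obstruction allows me to replace $X$ up to homotopy equivalence by a finite $n$-dimensional Poincar\'e complex; the vanishing of $\Wh(G)$ and of the negative $K$-groups then identifies the $s$-, $h$- and $\langle -\infty\rangle$-decorations of $L$-theory, so that the assembly map appearing in the BFMW surgery sequence coincides literally with the one in the hypothesis.

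Next I would run the BFMW surgery exact sequence, which is structurally a copy of the Sullivan--Wall sequence used in the proof of Lemma~\ref{lem:FJC_implies_BC}, except that the normal invariant term is built from the full $4$-periodic quadratic $L$-spectrum $\bfL^{\langle -\infty\rangle}$ rather than from its $1$-connective cover $\bfL^s(\IZ)\langle 1\rangle$; this modification is precisely what adapts the theory from topological manifolds to ANR-homology manifolds with DDP, and the classical ``resolution obstruction'' distinguishing the two categories is absorbed into the bottom $\IZ$-summand of the periodic spectrum. Using the classifying map $c\colon X\to BG$, the $X$-assembly factorises through the $BG$-assembly
$$H_n(X;\bfL^{\langle -\infty\rangle})\xrightarrow{\;c_*\;} H_n(BG;\bfL^{\langle -\infty\rangle})\xrightarrow{\;A_{BG}\;} L_n^{\langle -\infty\rangle}(\IZ G,w),$$
and Conjecture~\ref{con:Farrell-Jones_Conjecture_torsionfree} makes $A_{BG}$ an isomorphism. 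The $\bfL$-theoretic fundamental class $[X]_{\bfL}\in H_n(X;\bfL^{\langle -\infty\rangle})$ afforded by the Poincar\'e duality on $X$ maps under assembly to the symmetric signature of $X$ in $L_n^{\langle -\infty\rangle}(\IZ G,w)$, so a diagram chase in the BFMW exact sequence forces $s^H(X)=0$, and BFMW's theorem then yields the desired compact ANR-homology manifold with the disjoint disk property.

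The main obstacle I anticipate is the last diagram chase: one must carefully identify $s^H(X)$ with the appropriate element in the cofibre of the $X$-assembly and verify that the naturality of $c\colon X\to BG$ transports the isomorphism $A_{BG}$ from the Farrell--Jones hypothesis into vanishing of the obstruction for $X$ itself. This bookkeeping is essentially Ranicki's algebraic theory of surgery packaged so that the assembly map of Conjecture~\ref{con:Farrell-Jones_Conjecture_torsionfree} genuinely controls ANR-homology-manifold realisability, and the torsionfreeness of $G$ together with the $K$-theoretic reductions is what makes all decoration issues disappear.
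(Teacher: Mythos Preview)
Your proposal is essentially a correct expansion of what the paper's proof consists of: the paper simply cites Ranicki's algebraic surgery theory \cite[Remark~25.13]{Ranicki(1992)} and the Bryant--Ferry--Mio--Weinberger papers \cite{Bryant-Ferry-Mio-Weinberger(1996),Bryant-Ferry-Mio-Weinberger(2007)}, and you have accurately unpacked those references---the $4$-periodic total surgery obstruction, the BFMW surgery exact sequence with the full $L$-spectrum in place of its $1$-connective cover, and the use of the $K$-theoretic hypotheses to pass to a finite complex and erase decoration differences. So the approach is the same.

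One point worth flagging, since you explicitly worry about the final diagram chase: the argument as you (and the paper) present it really uses that $c\colon X\to BG$ is a homotopy equivalence, i.e.\ that $X$ is aspherical. Only then does the Farrell--Jones isomorphism for $A_{BG}$ force the structure group $\overline{\mathbb{S}}_n(X)$ to vanish and hence $\bar s(X)=0$. For a non-aspherical $X$ with $\pi_1(X)\cong G$, knowing that $A_{BG}$ is an isomorphism tells you only that $c_*\bar s(X)=0$ in $\overline{\mathbb{S}}_n(BG)$, which is not enough. The lemma's statement omits the word ``aspherical'', but the surrounding context (Poincar\'e duality groups, Conjecture~\ref{con:aspherical_Poincare-Complexes}) and its only use in the paper (applied to $BG_i$ in the proof of Theorem~\ref{the:Product_decomposition}) make clear that this is the intended hypothesis. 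With that understood, your sketch is complete.
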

\begin{proof}
See~\cite[Remark~25.13 on page~297]{Ranicki(1992)}, 
\cite[Main Theorem on page~439 and Section~8]{Bryant-Ferry-Mio-Weinberger(1996)}
and \cite[Theorem A and Theorem B]{Bryant-Ferry-Mio-Weinberger(2007)}.
\end{proof}

\begin{remark}[Compact homology $\ANR$-manifolds versus closed topological manifolds]
 \label{rem:homology_ANR-manifolds_versus_topological_manifolds}
 In the following all manifolds have dimension $\ge 6$.
 One would prefer if in the conclusion of
 Lemma~\ref{rem:Borel_FJC_Poincare_complexes} one could replace ``compact
 homology $\ANR$-manifold'' by ``closed topological manifold''.  The problem is
 that in the geometric exact surgery sequence one has to work with the
 $1$-connective cover $\bfL\langle 1 \rangle$ of the $L$-theory spectrum $\bfL$,
 whereas in the assembly map appearing in the Farrell-Jones setting one
 uses the $L$-theory spectrum $\bfL$. The $L$-theory spectrum $\bfL$
 is $4$-periodic, i.e., $\pi_n(\bfL) \cong \pi_{n+4}(\bfL)$ for $n \in
 \IZ$. The $1$-connective cover $\bfL\langle 1 \rangle$ comes with a
 map of spectra $\bff\colon \bfL\langle 1 \rangle \to \bfL$ such
 that $\pi_n(\bff)$ is an isomorphism for $n \ge 1$ and
 $\pi_n(\bfL\langle 1 \rangle) = 0$ for $n \le 0$. Since $\pi_0(\bfL)
 \cong \IZ$, one misses a part involving $L_0(\IZ)$ 
 of the so called \emph{total surgery obstruction} due to Ranicki, i.e., the
 obstruction for a finite Poincar\'e
 complex to be homotopy equivalent to a closed topological manifold, if
 one deals with the periodic $L$-theory spectrum $\bfL$ and picks up only
 the obstruction for a finite Poincar\'e complex to be homotopy
 equivalent to a compact homology $\ANR$-manifold, the so called \emph{four-periodic
 total surgery obstruction}. The difference of these two obstructions
 is related to the \emph{resolution obstruction} of Quinn which takes values in $L_0(\IZ)$. 
 Any element of $L_0(\IZ)$ can be realized by an appropriate
 compact homology $\ANR$-manifold as its \emph{resolution obstruction}. There are 
 compact homology $\ANR$-manifolds that are not homotopy equivalent to
 closed manifolds. But no example of an aspherical compact homology $\ANR$-manifold
 that is not homotopy equivalent to a closed topological manifold is known.
 For  an aspherical compact homology $\ANR$-manifold $M$, the total surgery obstruction
 and the resolution obstruction carry the same information. So we could replace
 in the conclusion of
 Lemma~\ref{rem:Borel_FJC_Poincare_complexes}  ``compact
 homology $\ANR$-manifold'' by ``closed topological manifold'' if and only if every aspherical
 compact homology $\ANR$-manifold with the disjoint disk property admits a resolution.

 We refer for instance 
 to~\cite{Bryant-Ferry-Mio-Weinberger(1996),Ferry-Pedersen(1995a),Quinn(1983a),Quinn(1987_resolution),Ranicki(1992)} for more information about this topic.
\end{remark}

\begin{question}[Vanishing of the resolution obstruction in the aspherical case]
\label{que:Vanishing_of_the_resolution_obstruction_in_the_aspherical_case}
Is every aspherical compact homology $\ANR$-manifold homotopy equivalent
to a closed manifold?
\end{question}


\typeout{--------------------   Section 6 ---------------------------------------}

\section{Product decompositions}
\label{sec:Product_decompositions}

In this section we show that, roughly speaking,
a closed aspherical manifold $M$ is a product $M_1 \times M_2$ if and
only if its fundamental group is a product $\pi_1(M) = G_1 \times
G_2$ and that such a decomposition is  unique up to homeomorphism.

\begin{theorem}[Product decomposition]
\label{the:Product_decomposition}
Let $M$ be a closed aspherical manifold of dimension $n$ with
fundamental group $G = \pi_1(M)$. Suppose we have a product decomposition
$$p_1 \times p_2 \colon G \xrightarrow{\cong}  G_1 \times G_2.$$
Suppose that $G$, $G_1$ and $G_2$  satisfy  the version of the 
Farrell-Jones Conjecture stated in
Theorem~\ref{con:Farrell-Jones_Conjecture_torsionfree}
in the case $R = \IZ$.

Then $G$, $G_1$ and $G_2$ are Poincar\'e duality groups whose
cohomological dimensions satisfy 
$$n = \cd(G) = \cd(G_1) + \cd(G_2).$$
Suppose in the sequel:

\begin{itemize}

\item the cohomological dimension $\cd(G_i)$ is different from $3$, $4$ and
      $5$ for $i = 1,2$. 
\item $n \ge 5$ or $n \le 2$ or ($n = 4$ and $G$ is good in the sense
      of Freedmann);

\end{itemize}
    
Then:

\begin{enumerate}

\item \label{the:product_decomposition:(1)}
      There are topological closed aspherical manifolds $M_1$ and
      $M_2$ together with isomorphisms 
      $$v_i \colon \pi_1(M_i) \xrightarrow{\cong}  G_i$$
      and maps
      $$f_i \colon M \to M_i$$
      for $i = 1,2$  such that
      $$f = f_1 \times f_2 \colon  M \to M_1 \times M_2$$
      is a homeomorphism and $v_i \circ  \pi_1(f_i) = p_i$ (up to inner
      automorphisms) for $i = 1,2$;

\item \label{the:product_decomposition:(2)}
      Suppose we have another such choice of topological closed aspherical manifolds $M_1'$ and
      $M_2'$ together with isomorphisms 
      $$v_i' \colon \pi_1(M_i') \xrightarrow{\cong}  G_i$$
      and maps
      $$f_i' \colon M \to M_i'$$
      for $i = 1,2$ such that the map $f' = f_1' \times f_2'$ is a homotopy
      equivalence and  $v_i' \circ \pi_1(f_i') = p_i$ (up to inner
      automorphisms) for $i = 1,2$. Then there are for $i = 1,2$
      homeomorphisms
      $h_i \colon M_i \to M_i'$
      such that $h_i \circ f_i \simeq f_i'$ and $v_i \circ \pi_1(h_i)
      = v_i'$ holds for $i = 1,2$.

\end{enumerate}
\end{theorem}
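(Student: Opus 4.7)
The plan is to first extract a product decomposition of the classifying space $BG \simeq M$ by constructing closed-manifold models $M_i$ for $BG_i$, and then to promote the resulting splitting up to homotopy into a homeomorphism via the Borel Conjecture, which is available from the Farrell--Jones hypothesis together with Lemma~\ref{lem:FJC_implies_BC}.

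First I set up the Poincar\'e duality picture. Since $M$ is a closed aspherical $n$-manifold with fundamental group $G$, it is a finite $n$-dimensional Poincar\'e complex by Theorem~\ref{the:manifolds_and_Poincare}, so $G$ is a Poincar\'e duality group of dimension $n$ by Theorem~\ref{the:Poincare_duality_groups_versus_Poincare_complexes}. The factors $G_i$ are retracts of the finitely presented group $G$ and thus finitely presented by~\cite[Lemma~1.3]{Wall(1965a)}. Since $\IZ G \cong \IZ G_1 \otimes_\IZ \IZ G_2$, a K\"unneth argument gives $H^*(G;\IZ G) \cong H^*(G_1;\IZ G_1) \otimes_\IZ H^*(G_2;\IZ G_2)$, and this must concentrate in degree $n$ with value $\IZ$; by a theorem of Bieri this forces each $G_i$ to be a Poincar\'e duality group of dimension $d_i := \cd(G_i)$ with $d_1 + d_2 = n$.

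Next I build closed topological aspherical manifolds $M_i$ with $\pi_1(M_i) \cong G_i$. By hypothesis $d_i \notin \{3,4,5\}$, so $d_i \in \{0,1,2\} \cup \{6,7,\ldots\}$. If $d_i \in \{0,1,2\}$ the choice is classical: a point, $S^1$, or the unique closed aspherical surface with the given fundamental group, using Theorem~\ref{the:Poincare_duality_groups_versus_Poincare_complexes}~\ref{the:Poincare_duality_groups_versus_Poincare_complexes:dim_1} and~\ref{the:Poincare_duality_groups_versus_Poincare_complexes:dim_2}. When $d_i \ge 6$, the Farrell--Jones hypothesis gives $\widetilde{K}_0(\IZ G_i) = 0$, so Theorem~\ref{the:Poincare_duality_groups_versus_Poincare_complexes}~\ref{the:Poincare_duality_groups_versus_Poincare_complexes:G_to_BG_K_0(ZG)} furnishes a finite aspherical Poincar\'e complex $X_i \simeq BG_i$ of dimension $d_i$, and Lemma~\ref{rem:Borel_FJC_Poincare_complexes} upgrades $X_i$ to a compact homology $\ANR$-manifold $N_i$ with the disjoint disk property. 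To replace $N_i$ by a genuine closed topological manifold I invoke Quinn's resolution obstruction $i(N_i) \in 1 + 8\IZ$, which is multiplicative under cartesian products. Since $N_1 \times N_2 \simeq BG_1 \times BG_2 \simeq BG \simeq M$ is a closed topological manifold, the product has trivial resolution obstruction, so $i(N_1) \cdot i(N_2) = 1$. As $1$ is the only unit of the multiplicative monoid $1 + 8\IZ$, each $i(N_i) = 1$, and each $N_i$ resolves to a closed topological aspherical manifold $M_i$. I expect this resolution-obstruction step (exploiting the global closed manifold $M$ to kill the obstruction of the individual factors) to be the main obstacle.

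For the splitting, pick homotopy equivalences $BG_i \simeq M_i$ and compose with the projection $BG \to BG_i$ and a homotopy inverse of a classifying map $M \to BG$ to obtain $f_i \colon M \to M_i$ with $v_i \circ \pi_1(f_i) = p_i$ up to inner automorphisms. Then $f_1 \times f_2$ is a map between closed aspherical $n$-manifolds inducing an isomorphism on fundamental groups, hence a homotopy equivalence by Theorem~\ref{the:homotopy_class_asph_CW}. The Farrell--Jones hypothesis for $G$ and the dimension assumption on $n$ make Lemma~\ref{lem:FJC_implies_BC} applicable, so $f_1 \times f_2$ is homotopic to a homeomorphism. For uniqueness, Theorem~\ref{the:homotopy_class_asph_CW} produces homotopy equivalences $h_i \colon M_i \to M_i'$ with $v_i' \circ \pi_1(h_i) = v_i$ (up to inner automorphism), a $\pi_1$-comparison then gives $h_i \circ f_i \simeq f_i'$, and the Borel Conjecture applied to each $G_i$---classical for $d_i \le 2$, and from Lemma~\ref{lem:FJC_implies_BC} for $d_i \ge 6$ since $d_i \ge 5$---makes each $h_i$ homotopic to a homeomorphism.
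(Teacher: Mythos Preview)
Your overall architecture matches the paper's proof closely, and most of the individual steps are fine. The one genuine gap is precisely the step you flagged as the ``main obstacle'': the passage from $N_1 \times N_2 \simeq M$ to $i(N_1 \times N_2) = 1$.

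Quinn's resolution obstruction is \emph{not} a homotopy invariant in general; there exist compact homology $\ANR$-manifolds homotopy equivalent to genuine closed manifolds but with nontrivial resolution obstruction (this is essentially the content of~\cite{Bryant-Ferry-Mio-Weinberger(1996)}). So the sentence ``since $N_1 \times N_2 \simeq M$ is a closed topological manifold, the product has trivial resolution obstruction'' is not justified as written: $N_1 \times N_2$ is only a homology $\ANR$-manifold, and its homotopy equivalence to the manifold $M$ does not by itself force $i(N_1 \times N_2)=1$. The paper closes this gap by invoking an additional input: for \emph{aspherical} compact homology $\ANR$-manifolds whose fundamental group satisfies the Novikov Conjecture, the resolution obstruction \emph{is} a homotopy invariant (see~\cite[Proposition on page~437]{Bryant-Ferry-Mio-Weinberger(1996)}). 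Since the Farrell--Jones hypothesis on $G$ implies Novikov for $G$, one gets $i(N_1 \times N_2) = i(M) = 1$, and then multiplicativity in $1+8\IZ$ finishes as you wrote. You should insert this Novikov-implies-homotopy-invariance step explicitly; without it the argument does not go through.

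A minor remark on the first paragraph: your K\"unneth/Bieri route to ``$G_i$ is a Poincar\'e duality group'' is a legitimate alternative to the paper's route via Gottlieb--Quinn (factors of a Poincar\'e complex are Poincar\'e complexes), but you should say why each $G_i$ is of type FP before applying the K\"unneth isomorphism $H^*(G;\IZ G)\cong H^*(G_1;\IZ G_1)\otimes H^*(G_2;\IZ G_2)$; finite presentation alone is not enough.
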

\begin{proof}
In the sequel we identify $G = G_1 \times G_2$ by $p_1 \times p_2$.
Since the closed manifold $M$ is a model for $BG$ and $\cd(G) = n$,
we can choose  $BG$ to be an $n$-dimensional  finite Poincar\'e complex in the sense of
Definition~\ref{definion:Poincare_complex} by
Theorem~\ref{the:manifolds_and_Poincare}.

From $BG = B(G_1 \times G_2) \simeq  BG_1 \times BG_2$ we conclude that
there are  finitely dominated $CW$-models for $BG_i$ for $i =1,2$.
Since $\widetilde{K}_0(\IZ G_i)$ vanishes
for $i = 0,1$ by assumption, we conclude from the theory of the
finiteness obstruction due to Wall~\cite{Wall(1965a),Wall(1966)}
that there are finite models for $BG_i$ of dimension
$\max\{\cd(G_i),3\}$. We conclude from~\cite{Gottlieb(1979)}, \cite{Quinn(1972)} 
that $BG_1$ and $BG_2$ are Poincar\'e complexes.
One easily checks using the K\"unneth formula that
$$n = \cd(G) = \cd(G_1) + \cd(G_2).$$
If $\cd(G_i) = 1$, then $BG_i$ is  homotopy
equivalent to a manifold, namely $S^1$,
by  Theorem~\ref{the:Poincare_duality_groups_versus_Poincare_complexes}~%
\ref{the:Poincare_duality_groups_versus_Poincare_complexes:dim_1}.
If $\cd(G_i) = 2$, then $BG_i$ is homotopy equivalent to a manifold
by 
Theorem~\ref{the:Poincare_duality_groups_versus_Poincare_complexes}~%
\ref{the:Poincare_duality_groups_versus_Poincare_complexes:dim_2}.
Hence it suffices to show for $i = 1,2$ that $BG_i$ is homotopy equivalent to
a closed aspherical manifold, provided that $\cd(G_i) \ge 6$.

Since by assumption $G_i$ satisfies  the version of the 
Farrell-Jones Conjecture stated in
Theorem~\ref{con:Farrell-Jones_Conjecture_torsionfree}
in the case $R = \IZ$, there exists a compact homology $\ANR$-manifold
$M_i$ that satisfies the disjoint disk property and is homotopy equivalent to $BG_i$
(see Lemma~\ref{rem:Borel_FJC_Poincare_complexes}).
Hence it remains to show
that Quinn's resolution obstruction $I(M_i) \in (1 + 8 \cdot \IZ)$ is $1$
(see~\cite[Theorem~1.1]{Quinn(1987_resolution)}). Since this
obstruction is multiplicative
(see~\cite[Theorem~1.1]{Quinn(1987_resolution)}),
we get $I(M_1 \times M_2) = I(M_1) \cdot I(M_2)$.
In general the resolution obstruction is not a homotopy
invariant, but it is known to be a homotopy invariant for 
aspherical compact $\ANR$-manifolds if the fundamental group satisfies the  Novikov
Conjecture~\ref{con:Novikov_Conjecture} (see~\cite[Proposition on
page~437]{Bryant-Ferry-Mio-Weinberger(1996)}).
Since $G_i$  satisfies  the version of the 
Farrell-Jones Conjecture stated in
Theorem~\ref{con:Farrell-Jones_Conjecture_torsionfree}
in the case $R = \IZ$, it satisfies the Novikov Conjecture
by Lemma~\ref{lem:FJC_implies_BC} and Remark~\ref{rem:The_Novikov_Conjecture_and_aspherical_manifolds}.
Hence $I(M_1 \times M_2) = I(M)$. Since $I(M)$ is a closed manifold,
we have $I(M) = 1$. Hence $I(M_i) = 1$ and $M_i$ is homotopy equivalent
to a closed manifold. This finishes the proof of
assertion~\ref{the:product_decomposition:(1)}.

Assertion~\ref{the:product_decomposition:(2)} follows from  Lemma~\ref{lem:FJC_implies_BC}.
\end{proof}

\begin{remark}[Product decompositions and non-positive sectional curvature]
\label{rem:product_decompositions_and_non-posiitve_sectional_curvature}
The following result has been proved by
Gromoll-Wolf~\cite[Theorem~2]{Gromoll-Wolf(1971)}.
Let $M$ be a closed Riemannian manifold with non-positive sectional curvature.
Suppose that we are given a splitting of its fundamental 
group $\pi_1(M) = G_1 \times G_2$ and that the center of $\pi_1(M)$ is trivial.
Then this splitting comes from an isometric product decomposition 
of closed Riemannian manifolds of non-positive sectional curvature
$M = M_1 \times M_2$. 
\end{remark}


\typeout{--------------------   Section 7 ---------------------------------------}

\section{Novikov Conjecture}
\label{sec:Novikov_Conjecture}

Let $G$ be a group and let $u \colon M \to BG$ be a map from a closed oriented
smooth manifold $M$ to $BG$. Let
$$\call(M)%
\index{L-class@$L$-class} \in \bigoplus_{k \in \IZ, k \ge 0}
H^{4k}(M;\IQ)$$ be the \emph{$L$-class of $M$}.  Its $k$-th entry
$\call(M)_k \in H^{4k}(M;\IQ)$ is a certain homogeneous polynomial of
degree $k$ in the rational Pontrjagin classes $p_i(M;\IQ) \in
H^{4i}(M;\IQ)$ for $i = 1,2, \ldots, k$ such that the coefficient
$s_k$ of the monomial $p_k(M;\IQ)$ is different from zero. The
$L$-class $\call(M)$ is determined by all the rational Pontrjagin
classes and vice versa.  The $L$-class depends on the tangent bundle and thus
on the differentiable structure of $M$.  For $x \in \prod_{k \ge 0}
H^k(BG;\IQ)$ define the \emph{higher signature of $M$ associated to
  $x$ and $u$} to be the integer
\begin{eqnarray}
  \sign_x(M,u) & := & \langle \call(M) \cup f^* x,[M] \rangle.
\end{eqnarray}
We say that $\sign_x$ for $x \in H^*(BG;\IQ)$ is \emph{homotopy invariant}
if for two closed oriented smooth manifolds $M$ and $N$ with reference
maps $u\colon M \to BG$ and $v \colon N \to BG$ we have
$$
\sign_x(M,u) = \sign_x(N,v),
$$
whenever there is an orientation preserving homotopy equivalence $f
\colon M \to N$ such that $v \circ f$ and $u$ are homotopic. If $x = 1
\in H^0(BG)$, then the higher signature $\sign_x(M,u)$ is by the
Hirzebruch signature formula
(see~\cite{Hirzebruch(1966),Hirzebruch(1970)}) 
the signature of $M$ itself and hence an invariant
of the oriented homotopy type.  This is one motivation for the
following conjecture.

\begin{conjecture}[Novikov Conjecture]
  \label{con:Novikov_Conjecture}
  Let $G$ be a group.  Then $\sign_x$ is homotopy invariant for all $x
  \in \prod_{k \in \IZ, k \ge 0} H^k(BG;\IQ)$.
\end{conjecture}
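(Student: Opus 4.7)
The plan is to reduce the Novikov Conjecture to a statement about rational injectivity of the $L$-theoretic assembly map, which puts it into the framework already developed earlier in the survey via the Farrell-Jones Conjecture. The first step is a reformulation: for a closed oriented smooth manifold $M$ with reference map $u \colon M \to BG$, all higher signatures $\sign_x(M,u)$, $x \in \prod_{k \ge 0} H^k(BG;\IQ)$, are encoded in the single rational homology class $u_*\bigl(\call(M) \cap [M]\bigr) \in H_*(BG;\IQ)$. So the Conjecture is equivalent to the statement that this class is an oriented-homotopy invariant of the pair $(M,u)$.

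The main step is to identify this class with the image of a homotopy-invariant object under an assembly map. Using the rational splitting $\bfL^{\langle -\infty\rangle}(\IZ) \otimes \IQ \simeq \bigvee_{k \ge 0} K(\IQ,4k)$, the rationalized $L$-theory homology $H_*(BG; \bfL^{\langle -\infty\rangle}(\IZ)) \otimes \IQ$ is isomorphic to $\bigoplus_k H_{*-4k}(BG;\IQ)$, and under this identification the $L$-theoretic fundamental class of $(M,u)$ corresponds precisely to $u_*(\call(M) \cap [M])$. On the other hand the image of this class under the assembly map $H_*(BG;\bfL^{\langle -\infty\rangle}(\IZ)) \to L_*^{\langle -\infty\rangle}(\IZ G)$ is the symmetric signature of $(M,u)$, which is well known to be an oriented homotopy invariant. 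Hence rational injectivity of the assembly map forces $u_*(\call(M) \cap [M]) \otimes \IQ$ to be homotopy invariant, which proves the Novikov Conjecture for $G$.

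Given this reduction, one would then invoke Theorem~\ref{the:status_of-Farrell-Jones}: for every $G$ in the class $\calc$ (containing all hyperbolic groups, all groups acting properly, isometrically, and cocompactly on a complete proper $\CAT(0)$-space, and closed under products, free products, subgroups, and directed colimits), the version of the Farrell-Jones Conjecture in Conjecture~\ref{con:Farrell-Jones_Conjecture_torsionfree} holds, so in particular the rational assembly map is injective and the Novikov Conjecture follows. For torsionfree $G$ in $\calc$ this is immediate from the stated form; for groups with torsion one must first pass to the version with coefficients in the orbit category, which similarly implies rational injectivity after tensoring with $\IQ$.

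The main obstacle is that Conjecture~\ref{con:Novikov_Conjecture} is asserted for \emph{every} group $G$, and no unified proof is known. Every existing approach proceeds by establishing rational injectivity (or strong injectivity) of some version of the $L$-theoretic or $KK$-theoretic assembly map, and each such approach (Farrell-Jones, Baum-Connes, controlled topology, bounded cohomology, coarse geometry) comes with geometric hypotheses on $G$ such as a finite-dimensional model for $\underline{E}G$ of negative-curvature or $\CAT(0)$ type, amenable actions on compact spaces, or uniform embeddings into Hilbert space. A proof for arbitrary finitely presented $G$ would require a substantially new technique beyond those summarized in Remarks~\ref{rem:work_of_Farrell-Jones} and~\ref{rem:directed_colimits_of_hyperbolic_groups}, and is what makes the Novikov Conjecture a central open problem rather than a theorem.
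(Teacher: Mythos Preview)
The statement you were asked to prove is labeled \emph{Conjecture} in the paper, and the paper offers no proof; it records the conjecture, gives references for its history, and moves on. So there is no ``paper's own proof'' to compare against. Your final paragraph already identifies the essential point: the Novikov Conjecture is open for arbitrary groups, and every known case goes through rational injectivity of some assembly map under geometric hypotheses on $G$.

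The first two paragraphs of your write-up are a correct and standard account of the reduction of Conjecture~\ref{con:Novikov_Conjecture} to rational injectivity of the $L$-theoretic assembly map, via the identification of $u_*(\call(M)\cap[M])$ with the image of the $L$-theory fundamental class and the homotopy invariance of the symmetric signature. This is exactly the mechanism implicit in Remark~\ref{rem:The_Novikov_Conjecture_and_aspherical_manifolds} and in the references to~\cite{Ferry-Ranicki-Rosenberg(1995),Kreck-Lueck(2005),Lueck-Reich(2005)} given in the paper. Invoking Theorem~\ref{the:status_of-Farrell-Jones} then indeed yields the Novikov Conjecture for all groups in the class~$\calc$, and your remark about the passage from torsionfree to general groups via the orbit-category formulation is also correct. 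But none of this constitutes a proof of the conjecture as stated (for all $G$), and you rightly say so. In short: your exposition is accurate, but what you have written is a discussion of the status of an open problem, not a proof --- which is all that is possible here, and all the paper itself provides.
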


This conjecture appears for the first time in the paper by Novikov
\cite[\S 11]{Novikov(1970b)}.  A survey about its history can be found
in \cite{Ferry-Ranicki-Rosenberg(1995)}. More information can be found
for instance
in~\cite{Ferry-Ranicki-Rosenberg(1995),Ferry-Ranicki-Rosenberg(1995b),Kreck-Lueck(2005)}.

We mention the following deep result due to
Novikov~\cite{Novikov(1965c),Novikov(1965b),Novikov(1966)}.

\begin{theorem}[Topological invariance of rational Pontrjagin classes]
  \label{the:Topological_invariance_of_rational_Pontrjagin_classes}
  The rational Pontrjagin classes $p_k(M,\IQ) \in H^{4k}(M;\IQ)$ are
  topological invariants, i.e.\ for a homeomorphism $f \colon M \to N$
  of closed smooth manifolds we have
  $$H_{4k}(f;\IQ)\bigl(p_k(M;\IQ)\bigr)  =  p_k(N;\IQ)$$
  for all $k \ge 0$ and in particular $H_*(f;\IQ)(\call(M)) = \call(N)$.
\end{theorem}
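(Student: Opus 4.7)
The strategy is to reduce the statement to the topological invariance of certain higher signatures, which can in turn be computed as signatures of codimension-$k$ submanifolds via the Hirzebruch signature theorem. First I would observe that $\mathcal{L}(M)$ determines the rational Pontrjagin classes via an invertible polynomial change of variables, so the theorem is equivalent to the statement $H_*(f;\IQ)(\call(M)) = \call(N)$. By Poincar\'e duality and the universal coefficient theorem over $\IQ$, this in turn reduces to verifying
\[
\langle \call(M) \cup u, [M] \rangle \;=\; \langle \call(N) \cup (f^{-1})^*u, [N] \rangle
\]
for every $u \in H^*(M;\IQ)$. After stabilizing by crossing with $S^1$'s if necessary (which does not affect the issue because $\call$ is multiplicative), a standard argument using that $K(\IZ,k)$ is rationally a product of Eilenberg--MacLane spectra shows that it suffices to treat classes $u = g^*y$ that are pulled back from a map $g \colon M \to T^n$ and a class $y \in H^*(T^n;\IQ)$.

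The next step is to interpret such a higher signature geometrically. If $g$ is smooth and $y = \mathrm{PD}[t]$ is dual to a regular value $t \in T^n$, then $V := g^{-1}(t)$ is a closed oriented smooth submanifold of dimension $\dim M - n$, and the naturality of the $L$-class together with the Hirzebruch signature theorem gives
\[
\sign(V) \;=\; \langle \call(V), [V] \rangle \;=\; \langle \call(M) \cup g^*y,\, [M] \rangle.
\]
The same principle, applied to preimages of subtori, expresses $\langle \call(M) \cup g^*y,[M]\rangle$ as a signature of a submanifold for arbitrary monomial classes $y$, and hence for all $y$ by linearity.

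To conclude, I would apply this with $g \circ f \colon N \to T^n$ in place of $g$. The fundamental difficulty is that $g \circ f$ is only continuous, so the preimage of a regular value is not a priori a smooth manifold and the Hirzebruch formula does not directly apply. The resolution is twofold. First, by topological transversality in the sense of Kirby--Siebenmann, one can deform $g \circ f$ to be transverse to $\{t\}$ so that the preimage is a closed topological submanifold $V'$, and $V'$ is homeomorphic (via $f$) to a smooth submanifold $V$ of $M$ of the type considered above. Second, the signature of a closed oriented topological manifold is a homotopy invariant — being the signature of the intersection form on middle-dimensional rational cohomology — so $\sign(V') = \sign(V)$. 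Combining these, the higher signature of $(N, g)$ equals that of $(M, g \circ f)$, which establishes the required identity and completes the proof.

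The main obstacle is the topological transversality step: Novikov's original argument predates the Kirby--Siebenmann machinery and instead proceeds by an elaborate inductive splitting of $M$ along codimension-one tori, effectively proving a special case of what would later be recognized as transversality for maps into tori. Any proof must address the fact that the bare statement is about topological manifolds but the definitions of $\call$ and of $\sign$ via the signature formula are differential-topological; the bridge between them is precisely the content of topological transversality (or its ad hoc precursors).
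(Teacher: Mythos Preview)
The paper does not prove this theorem; it is stated as a deep result of Novikov, with citations to his original papers, and then used as a black box. There is thus no proof in the paper to compare your attempt against.

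Your outline is the standard modern route via Kirby--Siebenmann topological transversality, and you correctly note that Novikov's original argument had to proceed by his torus-splitting technique precisely because TOP transversality was unavailable. Two points need attention. First, the phrase ``$K(\IZ,k)$ is rationally a product of Eilenberg--MacLane spectra'' does not parse; what you want is Serre's theorem that some multiple of any integral class is represented by a map to a sphere, so that one may take transverse preimages---and once you are invoking KS transversality anyway, the detour through tori is unnecessary. Second, and more seriously, there is a genuine gap at the end. You produce a TOP submanifold $V' = f(V) \subset N$ and observe $\sign(V') = \sign(V)$, but you never link $\sign(V')$ to $\langle \call(N) \cup (f^{-1})^{*}u, [N] \rangle$. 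The Hirzebruch formula applies only to \emph{smooth} submanifolds of $N$, so you must additionally choose a smooth map $g' \colon N \to S^{k}$ homotopic to $g \circ f^{-1}$, take its smooth transverse preimage $W$, note that $\sign(W)$ computes the higher signature of $N$ by Hirzebruch, and then use KS transversality on the homotopy from $g \circ f^{-1}$ to $g'$ to conclude that $V'$ and $W$ are TOP-cobordant, hence have equal signature. Without this comparison the two sides of your equation are never connected. (Minor: since $f \colon M \to N$, the map you want on $N$ is $g \circ f^{-1}$, not $g \circ f$.)
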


The rational Pontrjagin classes are not homotopy invariants and the
integral Pontrjagin classes $p_k(M)$ are not homeomorphism invariants
(see for instance~\cite[Example~1.6 and
Theorem~4.8]{Kreck-Lueck(2005)}).

\begin{remark}[The Novikov Conjecture and aspherical manifolds]
  \label{rem:The_Novikov_Conjecture_and_aspherical_manifolds}
  Let $f \colon M \to N$ be a homotopy equivalence of closed
  aspherical manifolds.  Suppose that the Borel
  Conjecture~\ref{con:Borel_Conjecture} is true for $G = \pi_1(N)$.
  This implies that $f$ is homotopic to a homeomorphism and hence by
  Theorem~\ref{the:Topological_invariance_of_rational_Pontrjagin_classes}
  $$f_*(\call(M)) = \call(N).$$
  But this is equivalent to the conclusion of the Novikov Conjecture in
  the case $N = BG$.
\end{remark}

\begin{conjecture} \label{con:positive_scalar_curvature} A closed
  aspherical smooth manifold does not admit a Riemannian metric of
  positive scalar curvature.
\end{conjecture}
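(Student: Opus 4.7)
The plan is to attack this conjecture via the Dirac operator method of Lichnerowicz--Gromov--Lawson--Rosenberg, combined with the strong Novikov conjecture, which is a consequence of the Farrell--Jones/Baum--Connes input already discussed in the preceding sections.

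First, I would handle the spin case. Suppose $M$ is spin of dimension $n$ and carries a metric of positive scalar curvature. The Lichnerowicz--Schr\"odinger formula
\[ D^2 = \nabla^*\nabla + \tfrac{1}{4}\,\mathrm{scal} \]
for the Atiyah--Singer Dirac operator $D$ implies $\ker D = 0$, so the crude index $\mathrm{ind}(D)\in KO_n(\mathrm{pt})$ vanishes. This alone is not enough; the right invariant is the Rosenberg index
\[ \alpha(M) \in KO_n\bigl(C^*_r(\pi;\IR)\bigr), \qquad \pi = \pi_1(M), \]
obtained as the image of the $KO$-theoretic fundamental class $[M]_{KO} \in KO_n(B\pi)$ under the real assembly map $A \colon KO_n(B\pi) \to KO_n(C^*_r(\pi;\IR))$. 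A twisted Lichnerowicz argument, using the Mishchenko--Fomenko line bundle associated with $\pi \to C^*_r(\pi;\IR)$, shows $\alpha(M) = 0$ whenever $M$ admits a PSC metric.

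Next I would invoke the strong Novikov conjecture, i.e.\ rational injectivity of $A$. For torsionfree $\pi$ this is a consequence of the Baum--Connes conjecture, and in our setup is implied by the Farrell--Jones type package recorded in Theorem~\ref{the:status_of-Farrell-Jones}. Since $M$ is aspherical, $M$ itself is a model for $B\pi$, so $[M]_{KO}$ lives in $KO_n(M) = KO_n(B\pi)$ and is nonzero --- rationally this can be detected by pairing with $\widehat{A}$-type classes, or, after inverting $2$, via the comparison to rational homology through the Pontryagin character. Injectivity of $A$ then forces $\alpha(M) \neq 0$, contradicting PSC. Thus, for every group $\pi$ in the class $\calc$ of Theorem~\ref{the:status_of-Farrell-Jones}, every closed aspherical \emph{spin} manifold with $\pi_1 = \pi$ fails to admit PSC.

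The serious obstacle is the non-spin case. The manifold $M$ need not be spin, and its universal cover need not be spin either, so the Dirac operator is unavailable as stated. Here one would combine several complementary techniques: Gromov--Lawson's notion of enlargeability together with its refinements to the real $K$-theory of the maximal group $C^*$-algebra (producing a $KO$-theoretic obstruction for universally enlargeable manifolds, and covering in particular the non-positively curved examples and many outputs of the hyperbolization construction of Subsection~\ref{subsec:hyperbolization}); Schoen--Yau's minimal hypersurface descent, which lowers the dimension by one and is effective at least for $n \le 8$; and, when applicable, direct curvature arguments in the presence of geometric structure. No single device is known to cover all closed aspherical manifolds, and producing a uniform topological obstruction to PSC without the spin hypothesis is the main conceptual obstacle. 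The exotic aspherical manifolds of Subsection~\ref{subsec:exotic_asphercial_manifolds}, whose universal covers need not be homeomorphic to $\IR^n$ and whose fundamental groups can be pathological, are precisely the critical test cases where neither enlargeability nor minimal surface techniques are known to apply in general.
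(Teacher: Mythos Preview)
The statement is labelled a \emph{Conjecture} in the paper, and the paper does not attempt to prove it; it is recorded precisely because it is open. What the paper offers immediately afterwards is Proposition~\ref{prop:Strong_Novikov-Conjecture_and_pos_scalar}, which deduces the conclusion under the \emph{additional hypothesis} that the strong Novikov Conjecture holds for~$G$, with proof by citation to Rosenberg~\cite[Theorem~3.5]{Rosenberg(1983)}. So there is no ``paper's own proof'' to compare against, and your proposal---rightly---does not pretend to be one: you explicitly flag the non-spin case as the obstacle where no uniform argument is known. In that sense your discussion is an honest and essentially accurate survey of the state of the art, more detailed than what the paper itself provides.

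There is, however, one genuine error in your outline. The strong Novikov Conjecture you need is rational injectivity of the \emph{analytic} assembly map $K_n(BG)\to K_n(C^*_r(G))$ (or its real $KO$-version), and this is \emph{not} a consequence of the Farrell--Jones package of Theorem~\ref{the:status_of-Farrell-Jones}, which concerns algebraic $K$- and $L$-theory of group rings. Farrell--Jones and Baum--Connes are distinct conjectures, proved by different techniques and currently known for different (overlapping but not nested) classes of groups; the paper is careful to keep them separate, and the class~$\calc$ of Theorem~\ref{the:status_of-Farrell-Jones} is not known to satisfy Baum--Connes in general. Your spin argument is the standard Rosenberg argument and is fine once you \emph{assume} the strong Novikov Conjecture directly---which is exactly the hypothesis the paper imposes in Proposition~\ref{prop:Strong_Novikov-Conjecture_and_pos_scalar} rather than deriving it from Farrell--Jones.
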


\begin{proposition} \label{prop:Strong_Novikov-Conjecture_and_pos_scalar}
  Suppose that the strong Novikov Conjecture is true for the
  group~$G$, i.e., the assembly map
  $$K_n(BG) \to K_n(C^*_r(G))$$ 
  is rationally injective for all $n \in \IZ$.  Let $M$ be a closed
  aspherical smooth manifold whose fundamental group is isomorphic to $G$.

Then $M$ carries no Riemannian metric of positive scalar curvature.
\end{proposition}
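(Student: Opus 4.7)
The plan is to obstruct positive scalar curvature using the Rosenberg $C^*$-index of a Dirac operator, whose vanishing is controlled by the very assembly map appearing in the Strong Novikov Conjecture.

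I first assume for clarity that $M$ is spin of dimension $n$ and orientable; the general case is reduced to this one by passing to an appropriate finite cover (using that asphericity and positive scalar curvature lift to covers) and using stability of the Strong Novikov Conjecture under passage to subgroups of finite index.

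For such $M$, let $D$ denote the spinor Dirac operator, and let $\calv = \widetilde{M} \times_G C^*_r(G) \to M$ be the Mishchenko--Fomenko flat bundle of right $C^*_r(G)$-modules associated to the universal cover. Twisting $D$ by $\calv$ yields an elliptic operator with coefficients in $C^*_r(G)$, whose index is a class
$$\operatorname{ind}_G(D) \in K_n(C^*_r(G)).$$
The proof now rests on two standard ingredients. First, the Lichnerowicz--Weitzenb\"ock formula for the twisted Dirac operator has a zeroth-order term that is a positive multiple of the scalar curvature; if the scalar curvature of $M$ is strictly positive, the twisted operator is invertible and therefore $\operatorname{ind}_G(D) = 0$. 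Second, the Kasparov/Mishchenko--Fomenko/Rosenberg index theorem identifies $\operatorname{ind}_G(D)$ with the image of the $K$-homology fundamental class $[D]_M \in K_n(M)$ of the Dirac operator under the composite
$$K_n(M) \xrightarrow{f_*} K_n(BG) \xrightarrow{\mu} K_n(C^*_r(G)),$$
where $f \colon M \to BG$ is the classifying map of the universal cover, a homotopy equivalence since $M$ is aspherical, and $\mu$ is the analytic assembly map of the Strong Novikov Conjecture.

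Combining these two ingredients, if $M$ had positive scalar curvature, then $\mu\bigl(f_*([D]_M)\bigr) = 0$. By the hypothesis that $\mu$ is rationally injective and by the fact that $f_*$ is an isomorphism, this would force $[D]_M = 0$ in $K_n(M) \otimes \IQ$. On the other hand, the $K$-homology Chern character
$$\operatorname{ch} \colon K_n(M) \otimes \IQ \xrightarrow{\cong} \bigoplus_{k \ge 0} H_{n-4k}(M;\IQ)$$
sends $[D]_M$ to $\widehat{A}(M) \cap [M]$, whose top-degree component is the oriented fundamental class $[M] \in H_n(M;\IQ)$, which is nonzero since $M$ is closed and orientable. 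This contradiction completes the proof.

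The main obstacle is the reduction from a general closed aspherical smooth manifold to the closed spin orientable case, together with the verification that the Strong Novikov Conjecture survives that reduction; the analytic heart of the argument (the Lichnerowicz vanishing and the identification of $\operatorname{ind}_G(D)$ with assembly applied to the fundamental $K$-homology class) is classical and can be quoted from the literature, for example from~\cite{Ferry-Ranicki-Rosenberg(1995)}.
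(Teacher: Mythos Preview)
The paper's own proof is a one-line citation to Rosenberg's paper~\cite[Theorem~3.5]{Rosenberg(1983)}, and your argument is precisely an unpacking of that result: Mishchenko--Fomenko index of the twisted Dirac operator, Lichnerowicz vanishing under positive scalar curvature, identification of the index with the assembly map applied to the $K$-homology fundamental class, and rational nontriviality of that class via the Chern character. For spin aspherical manifolds this is exactly right, and it is the content of the citation.

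The gap is your reduction from the general case to the spin case. Passing to the orientation double cover is fine, and the Strong Novikov Conjecture does descend to finite-index subgroups. But you assert without justification that one can pass to a \emph{finite} cover which is spin. This would require that $w_2(M)\in H^2(M;\IZ/2)=H^2(G;\IZ/2)$ restricts to zero on some finite-index subgroup of $G$, and there is no reason for that to hold: an aspherical manifold can have a fundamental group with few (or no) finite quotients, and even when finite-index subgroups abound, $w_2$ need not die on any of them. So the step ``pass to a finite spin cover'' is not available in general. (Note that the spin structure on the contractible universal cover $\widetilde{M}$ exists but is typically not $G$-equivariant when $w_2(M)\neq 0$, so one cannot simply work upstairs either.) You yourself flag this reduction as ``the main obstacle''; as written it is not overcome. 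In fairness, Rosenberg's Theorem~3.5 is also stated for spin manifolds, so the paper's one-line citation has the same lacuna; handling the non-spin aspherical case requires additional input beyond what either you or the paper supplies here.
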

\begin{proof}
  See~\cite[Theorem~3.5]{Rosenberg(1983)}.
\end{proof}

\begin{proposition} \label{prop:Strong_Novikov-Conjecture_and_Zero_in_the_spectrum}
 Let $G$ be a group. Suppose that the assembly map
$$K_n(BG) \to K_n(C^*_r(G))$$ 
is rationally injective for all $n \in \IZ$.  Let $M$ be a closed
aspherical smooth manifold whose fundamental group is isomorphic to
$G$.

Then $M$ satisfies the Zero-in-the-Spectrum
Conjecture~\ref{con:zero-in-the-Spectrum_Conjecture}
\end{proposition}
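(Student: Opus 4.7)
The plan is to argue by contradiction: assume that zero lies in the spectrum of no Laplacian $\Delta_p$ on $L^2 \Omega^p(\widetilde{M})$, and then use the rational injectivity of the assembly map together with an index calculation to arrive at a contradiction.

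First I would consider the signature operator $D = d + d^*$ acting on $L^2$-forms on $\widetilde{M}$; it is $G$-equivariant, elliptic and self-adjoint, and satisfies $D^2 = \bigoplus_p \Delta_p$. Under the contrapositive assumption, there is some $\epsilon > 0$ with $\operatorname{spec}(D^2) \subseteq [\epsilon,\infty)$, so $D$ is invertible as an unbounded operator on the Hilbert $C^*_r(G)$-module $L^2 \Omega^*(\widetilde{M})$. Consequently, its $C^*_r(G)$-analytic index vanishes in $K_n(C^*_r(G))$.

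Next I would invoke the index theorem of Mishchenko and Kasparov, which identifies this analytic index with $\mu(u_*[D_M])$, where $\mu \colon K_n(BG) \to K_n(C^*_r(G))$ is the analytic assembly map, $u \colon M \to BG$ is the classifying map of $\widetilde{M}$, and $[D_M] \in K_n(M)$ is the K-homology class of the signature operator on $M$. The hypothesized rational injectivity of $\mu$ then forces $u_*[D_M] \otimes \IQ = 0$ in $K_n(BG) \otimes \IQ$. Since $M$ is aspherical, $u$ is a homotopy equivalence, hence $u_*$ is an isomorphism on rational K-homology; this yields $[D_M] \otimes \IQ = 0$ in $K_n(M) \otimes \IQ$.

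The contradiction then comes from the non-vanishing of the Chern character of the signature class: by the Atiyah--Singer theorem in K-homology, $\operatorname{ch}([D_M])$ equals $\call(M) \cap [M]$ in $H_*(M;\IQ)$, whose component in top degree is the fundamental class $[M] \in H_n(M;\IQ)$ and is therefore non-zero; hence $[D_M] \otimes \IQ \neq 0$, contradicting the above. The main obstacle in making this rigorous is the correct identification of the $C^*$-analytic index of $D$ on $\widetilde{M}$ with the image of $u_*[D_M]$ under the Baum--Connes assembly map, which requires careful handling of functional calculus for unbounded equivariant elliptic operators on Hilbert $C^*$-modules (and, in the non-orientable case, a preliminary passage to the orientation double cover); everything else is a clean combination of asphericity, rational injectivity, and non-triviality of the top-degree Chern character.
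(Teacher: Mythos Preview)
Your argument is correct and is essentially the proof given by Lott in~\cite[Corollary~4]{Lott(1996b)}, which is all the paper cites here; so you have recovered precisely what the paper defers to the literature. The only cosmetic points worth noting are that one should be a bit careful with the parity of $n$ (even versus odd signature operator, determining whether the class lives in $K_0$ or $K_1$) and with the $2$-power normalizations in the formula $\operatorname{ch}([D_M]) = \call(M)\cap [M]$, but neither affects the argument since only the nonvanishing of the top-degree component matters.
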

\begin{proof}
  See~\cite[Corollary~4]{Lott(1996b)}.
\end{proof}

We refer to~\cite[Section~5.1.3]{Lueck-Reich(2005)} for a discussion
about the large class of groups for which the assembly map $K_n(BG)
\to K_n(C^*_r(G))$ is known to be injective or rationally injective.


\typeout{--------------------   Section 8 ---------------------------------------}

\section{Boundaries of hyperbolic groups}
\label{sec:Boundaries_of_hyperbolic_groups}

We announce the following two theorems 
joint with Arthur Bartels and Shmuel Weinberger.
For the notion of the boundary of a hyperbolic group
and its main properties we refer for instance 
to~\cite{Kapovich+Benakli(2002)}.

\begin{theorem}
\label{the:boundaries}
Let $G$ be a torsion-free hyperbolic group and let $n$ be an 
integer  $\geq 6$. Then: 

\begin{enumerate}

\item \label{the:boundaries:equivalent_statements}
The following statements are equivalent:
\begin{enumerate}
  \item \label{the:boundaries:equivalent_statements:sphere}
        The boundary $\partial G$ is homeomorphic to $S^{n-1}$;
  \item \label{the:boundaries:equivalent_statements:manifold}
        There is a closed aspherical topological manifold $M$ 
        such that $G \cong \pi_1(M)$, 
        its universal covering $\widetilde{M}$ is 
        homeomorphic to $\IR^n$
        and the compactification of $\widetilde{M}$ by 
        $\partial G$ is homeomorphic to $D^n$;
\end{enumerate}

\item \label{the:boundaries:uniqueness}
The aspherical manifold $M$ appearing in the assertion above
is unique up to homeomorphism.
\end{enumerate}
\end{theorem}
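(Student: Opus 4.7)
The implication \ref{the:boundaries:equivalent_statements:manifold}$\implies$\ref{the:boundaries:equivalent_statements:sphere} is immediate: if the compactification of $\widetilde{M}\cong\IR^n$ by $\partial G$ is homeomorphic to $D^n$, then $\partial G$ is the topological frontier of this compactification, hence homeomorphic to $\partial D^n = S^{n-1}$.

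The substance of the theorem lies in \ref{the:boundaries:equivalent_statements:sphere}$\implies$\ref{the:boundaries:equivalent_statements:manifold}. The plan is to realize $BG$ as an aspherical manifold in two stages. First, one uses work of Bestvina--Mess, which computes the group cohomology $H^*(G;\IZ G)$ in terms of the \v{C}ech cohomology of $\partial G$, to conclude from $\partial G\cong S^{n-1}$ that $G$ is a Poincar\'e duality group of dimension $n$ in the sense of Definition~\ref{def:Poincare-duality_group}. Since $G$ is torsion-free hyperbolic it admits a finite model for $BG$ (for instance a quotient of the Rips complex), and in particular $\widetilde{K}_0(\IZ G)=0$ by Theorem~\ref{the:status_of-Farrell-Jones} and Conjecture~\ref{con:Farrell-Jones_Conjecture_torsionfree}; combined with Theorem~\ref{the:Poincare_duality_groups_versus_Poincare_complexes}, this produces a finite $n$-dimensional aspherical Poincar\'e complex with fundamental group $G$. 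Second, since hyperbolic groups lie in the class $\calc$ of Theorem~\ref{the:status_of-Farrell-Jones}, Lemma~\ref{rem:Borel_FJC_Poincare_complexes} (applicable because $n\ge 6$) upgrades this Poincar\'e complex to a compact homology $\ANR$-manifold $M$ with the disjoint disk property and with $\pi_1(M)\cong G$.

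The main obstacle is then to show that $M$ is homotopy equivalent to a genuine closed topological manifold, i.e.\ that Quinn's resolution obstruction $I(M)\in 1+8\IZ$ equals~$1$, and to identify $\widetilde{M}$ together with its compactification. Here the hypothesis that $\partial G$ is a true topological sphere is decisive: the Bestvina--Mess $Z$-compactification of the contractible space on which $G$ acts geometrically gives a $Z$-compactification of $\widetilde{M}$ by $S^{n-1}$, and the plan is to exploit this compactification to produce a local collar near the boundary sphere. From such a collar, together with the disjoint disk property of $\widetilde{M}$, one concludes on the one hand that the resolution obstruction vanishes (so $M$ may be taken to be a topological manifold), and on the other that $\widetilde{M}$ is homeomorphic to the open $n$-disk and that its compactification by $\partial G$ is homeomorphic to $D^n$. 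Verifying the compatibility of the Bestvina--Mess compactification with the Bryant--Ferry--Mio--Weinberger manifold recognition, and in particular using it to kill the resolution obstruction, is the technical heart of the argument.

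Assertion~\ref{the:boundaries:uniqueness} then follows immediately from the Borel Conjecture: by Theorem~\ref{the:status_of-Farrell-Jones} the group $G$ satisfies the version of the Farrell-Jones Conjecture formulated in Conjecture~\ref{con:Farrell-Jones_Conjecture_torsionfree}, so Lemma~\ref{lem:FJC_implies_BC} applies and any two closed aspherical topological manifolds of dimension $n\ge 5$ with fundamental group $G$ are homeomorphic.
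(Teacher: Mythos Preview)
The paper does not actually prove this theorem: Section~\ref{sec:Boundaries_of_hyperbolic_groups} explicitly \emph{announces} it as joint work with Bartels and Weinberger, and in lieu of a proof only records that the argument rests on the Bryant--Ferry--Mio--Weinberger surgery theory for compact homology $\ANR$-manifolds and on the $K$- and $L$-theoretic Farrell--Jones Conjecture for hyperbolic groups. Your outline is entirely consistent with that description and in fact supplies more detail than the paper does: you correctly bring in the Bestvina--Mess identification of $H^*(G;\IZ G)$ with the reduced \v{C}ech cohomology of $\partial G$ to obtain the Poincar\'e duality structure, invoke the Farrell--Jones input via Theorem~\ref{the:status_of-Farrell-Jones} and Lemma~\ref{rem:Borel_FJC_Poincare_complexes} to pass to a homology $\ANR$-manifold, and derive assertion~\ref{the:boundaries:uniqueness} from Lemma~\ref{lem:FJC_implies_BC}.

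You are also right that the crux is the vanishing of Quinn's resolution obstruction, and your heuristic---that the $Z$-compactification of $\widetilde{M}$ by a \emph{genuine} sphere $S^{n-1}$ forces manifold points near the boundary, whence $I(M)=1$ by local constancy of Quinn's invariant---is the intended mechanism, and is precisely where the hypothesis $\partial G\cong S^{n-1}$ (as opposed to merely having the \v{C}ech cohomology of a sphere, cf.\ Theorem~\ref{thm:boundary-has-spherical-cech-cohomology}) is used. As you yourself flag, turning this into an honest argument (that the compactification is an $\ANR$ homology manifold with boundary, and that the sphere boundary collars in) is the technical heart, and your proposal does not carry it out. Since the paper offers no further detail either, there is nothing more to compare: your sketch matches, and slightly elaborates on, the strategy the paper indicates.
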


The proof depends strongly on the surgery theory for compact
homology $\ANR$-manifolds due to Bryant-Ferry-Mio-Weinberger~%
\cite{Bryant-Ferry-Mio-Weinberger(1996)}
and the validity of the $K$- and $L$-theoretic Farrell-Jones Conjecture for
hyperbolic groups due to
Bartels-Reich-L\"uck~\cite{Bartels-Lueck-Reich(2008hyper)}
and Bartels-L\"uck~\cite{Bartels-Lueck(2009Borelhyp)}. 
It seems likely that this result holds also if $n = 5$.
Our methods can be extended to this case 
if the surgery theory from~\cite{Bryant-Ferry-Mio-Weinberger(1996)} 
can be extended to the case of $5$-dimensional compact 
homology $\ANR$-manifolds.

We do not get information in dimensions $n \le 4$
for the usual problems about surgery. 
For instance, our methods
give no information in the case, where the boundary is homeomorphic to 
$S^3$, since virtually cyclic groups are the only hyperbolic groups
which are known to be good in the sense of 
Friedman~\cite{Freedman(1983)}. 
In the case $n = 3$ 
there is the conjecture of Cannon~\cite{Cannon(1991)} that
a group $G$ acts properly, isometrically and cocompactly on the
$3$-dimensional hyperbolic plane $\IH^3$ if and only if it is a
hyperbolic group whose boundary is homeomorphic to $S^2$. 
Provided that the infinite hyperbolic group $G$ occurs as the 
fundamental group of a closed irreducible $3$-manifold,
Bestvina-Mess~\cite[Theorem~4.1]{Bestvina-Mess(1991)} 
have shown that its universal covering is homeomorphic to 
$\IR^3$ and its compactification by $\partial G$ is 
homeomorphic to $D^3$, and the Geometrization Conjecture of
Thurston implies that $M$ is hyperbolic and 
$G$ satisfies Cannon's conjecture.
The problem is solved in the case $n = 2$, namely, 
for a hyperbolic group $G$
its boundary $\partial G$ is homeomorphic to $S^1$ if and only if
$G$ is a Fuchsian group 
(see~\cite{Casson-Jungreis(1994),Freden(1995),Gabai(1991)}).

For every $n \ge 5$ there exists a strictly negatively
curved polyhedron of dimension $n$
whose fundamental group $G$ is hyperbolic, which is homeomorphic to a
closed aspherical smooth manifold and 
whose universal covering is homeomorphic to $\IR^n$, but 
the boundary $\partial G$ is not homeomorphic to $S^{n-1}$, 
see~\cite[Theorem~5c.1 on page~384 and Remark on page~386]
{Davis-Januszkiewicz(1991)}.
Thus the condition that $\partial G$ is a sphere for a torsion-free 
hyperbolic group is (in high dimensions) not equivalent to
the existence of an aspherical manifold whose 
fundamental group is $G$.

\begin{theorem}
  \label{thm:boundary-has-spherical-cech-cohomology}
  Let $G$ be a torsion-free hyperbolic group and let $n$ be an 
integer  $\geq 6$.  Then
  \begin{enumerate}
  \item \label{thm:boundary-has-spherical-cech-cohomology:ex}
  The following statements are equivalent:
  \begin{enumerate}
    \item \label{thm:boundary-cech:cech} 
          The boundary $\partial G$ has the integral \v{C}ech cohomology of
          $S^{n-1}$;
    \item \label{thm:boundary-cech:dualtity}
          $G$ is a Poincar\'e duality group of dimension $n$;
    \item \label{thm:boundary-cech:homology-mfd}
          There exists a compact homology $\ANR$-manifold
          $M$ homotopy equivalent to $BG$. 
          In particular, $M$ is aspherical and $\pi_1(M) \cong G$;
  \end{enumerate}
  
  \item \label{thm:boundary-has-spherical-cech-cohomology:uniqueness}
  If the statements in 
  assertion~\ref{thm:boundary-has-spherical-cech-cohomology:ex}
  hold, then the compact homology $\ANR$-manifold $M$ appearing there is
  unique up to $s$-cobordism of compact $\ANR$-homology manifolds.
\end{enumerate}
\end{theorem}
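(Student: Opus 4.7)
The plan is to obtain the cyclic implications among (a), (b), (c) by combining the Bestvina--Mess identification of the \v{C}ech cohomology of the boundary of a hyperbolic group with the $\IZ G$-cohomology of $G$, the Farrell--Jones machinery for hyperbolic groups (Theorem~\ref{the:status_of-Farrell-Jones}) and the ANR-manifold surgery theory of Bryant--Ferry--Mio--Weinberger already invoked in Lemma~\ref{rem:Borel_FJC_Poincare_complexes}.

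For (a) $\iff$ (b) I would appeal to the theorem of Bestvina--Mess, which for a torsion-free hyperbolic group $G$ gives natural isomorphisms
$$\check{H}^{k-1}(\partial G; \IZ) \cong H^k(G; \IZ G)$$
in the relevant range (the Rips complex provides a finite model for $BG$, so $G$ is of type FP and these cohomology groups behave as expected). The condition that $\check{H}^*(\partial G)$ agrees with that of $S^{n-1}$ is then exactly the statement that $H^k(G;\IZ G)$ vanishes for $k \neq n$ and is $\IZ$ for $k = n$, which combined with the FP property is the definition of a Poincar\'e duality group of dimension $n$.

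Next, (b) $\implies$ (c). Since $G$ is torsion-free hyperbolic, a finite model for $BG$ exists, and under (b) this finite $BG$ is an $n$-dimensional Poincar\'e complex by Theorem~\ref{the:Poincare_duality_groups_versus_Poincare_complexes}. Hyperbolic groups satisfy the version of the Farrell--Jones Conjecture of Conjecture~\ref{con:Farrell-Jones_Conjecture_torsionfree} by Theorem~\ref{the:status_of-Farrell-Jones}, so Lemma~\ref{rem:Borel_FJC_Poincare_complexes} (applicable because $n \geq 6$) yields a compact homology ANR-manifold $M$ with the disjoint disk property homotopy equivalent to $BG$. The converse (c) $\implies$ (b) is immediate from Theorem~\ref{the:manifolds_and_Poincare}: a compact homology ANR-manifold is a finite Poincar\'e complex, and if it is moreover aspherical with fundamental group $G$ then $G$ inherits Poincar\'e duality of the correct dimension.

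For uniqueness in~\ref{thm:boundary-has-spherical-cech-cohomology:uniqueness}, I would run the Bryant--Ferry--Mio--Weinberger surgery exact sequence for compact homology ANR-manifolds with the disjoint disk property and fundamental group $G$, comparing any two realizations $M_0, M_1$ of (c). Thanks to the $L$-theoretic half of the Farrell--Jones Conjecture for hyperbolic groups, the relevant $L$-theory assembly map $H_*(BG;\bfL^{\langle -\infty\rangle}) \to L_*^{\langle -\infty\rangle}(\IZ G)$ is an isomorphism, so the four-periodic structure set of $M_0$ is trivial up to $s$-cobordism of ANR-homology manifolds, producing the desired cobordism between $M_0$ and $M_1$. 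The reason the conclusion is phrased only up to $s$-cobordism of ANR-homology manifolds, rather than homeomorphism of topological manifolds, is exactly the $\bfL$ versus $\bfL\langle 1\rangle$ discrepancy, i.e.\ Quinn's resolution obstruction flagged in Remark~\ref{rem:homology_ANR-manifolds_versus_topological_manifolds}.

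The main obstacle is the step (b) $\implies$ (c): one is forced to leave the topological category because there is no way, within the information provided by the boundary alone, to control the resolution obstruction in $L_0(\IZ)$, and producing the ANR-homology manifold itself requires the full strength of both Bryant--Ferry--Mio--Weinberger and the Farrell--Jones Conjecture for hyperbolic groups. The Bestvina--Mess input and the surgery-theoretic uniqueness argument are then technical but routine, once this step is in place.
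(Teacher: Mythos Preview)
Your proposal is essentially correct and follows exactly the line of argument indicated by the paper. Note, however, that in the paper this theorem is only \emph{announced} (jointly with Bartels and Weinberger) and no proof is given; the surrounding text merely points to the two main ingredients, namely the Bryant--Ferry--Mio--Weinberger surgery theory for compact homology $\ANR$-manifolds and the validity of the $K$- and $L$-theoretic Farrell--Jones Conjecture for hyperbolic groups. Your sketch supplies the missing connective tissue: the Bestvina--Mess identification $\check{H}^{k-1}(\partial G;\IZ)\cong H^k(G;\IZ G)$ for the equivalence (a)$\Leftrightarrow$(b), Theorem~\ref{the:Poincare_duality_groups_versus_Poincare_complexes} together with Lemma~\ref{rem:Borel_FJC_Poincare_complexes} for (b)$\Rightarrow$(c), Theorem~\ref{the:manifolds_and_Poincare} for (c)$\Rightarrow$(b), and the four-periodic surgery sequence for the uniqueness statement. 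This is precisely the intended architecture, and your remarks on why the conclusion is only up to $s$-cobordism (the $\bfL$ versus $\bfL\langle 1\rangle$ issue and the absence of an $s$-cobordism theorem for $\ANR$-homology manifolds) are on point.
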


The discussion of compact homology $\ANR$-manifolds versus closed topological manifolds
of Remark~\ref{rem:homology_ANR-manifolds_versus_topological_manifolds} and
Question~\ref{que:Vanishing_of_the_resolution_obstruction_in_the_aspherical_case} 
are relevant for Theorem~\ref{thm:boundary-has-spherical-cech-cohomology} as well.

In general the boundary of a hyperbolic group is not locally a 
Euclidean space but has a fractal behavior. 
If the boundary $\partial G$ of an infinite
hyperbolic group $G$ contains an open subset homeomorphic to 
Euclidean $n$-space, then it is homeomorphic to $S^n$.
This is proved in~\cite[Theorem~4.4]{Kapovich+Benakli(2002)},
where more information about the boundaries of hyperbolic groups can
be found.


\typeout{--------------------   Section 9 ---------------------------------------}

\section{$L^2$-invariants}
\label{sec:L2-invariants}

Next we mention some prominent conjectures about aspherical manifolds and
$L^2$-invariants. For more information about these conjectures and their status
we refer to~\cite{Lueck(2002)} and~\cite{Lueck(2003h)}.


\subsection{The Hopf and the Singer Conjecture}
\label{subsec:Singer_Conjecture}

\begin{conjecture}[Hopf Conjecture]%
\label{con:Hopf_Conjecture}
If  $M$ is an aspherical closed manifold of even dimension, then
$$(-1)^{\dim(M)/2} \cdot \chi(M) \ge 0.$$
If  $M$ is a  closed Riemannian manifold of even dimension with
sectional curvature $\sec(M)$, then
$$\begin{array}{rlllllll}
(-1)^{\dim(M)/2} \cdot \chi(M) & > & 0 & &
\mbox{ if } & \sec(M) & < & 0;
\\
(-1)^{\dim(M)/2} \cdot \chi(M) & \ge   & 0 & &
\mbox{ if } & \sec(M) & \le & 0;
\\
\chi(M) & = & 0 & &
\mbox{ if } & \sec(M) & = & 0;
\\
\chi(M) & \ge & 0 & &
\mbox{ if } & \sec(M) & \ge & 0;
\\
\chi(M) & > & 0 & &
\mbox{ if } & \sec(M) & > & 0.
\end{array}$$
\end{conjecture}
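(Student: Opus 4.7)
The plan is to split Conjecture~\ref{con:Hopf_Conjecture} into its Riemannian-geometric statements and its purely aspherical statement, and attack the two parts with very different tools.

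For the curvature statements, the natural tool is the Chern--Gauss--Bonnet theorem: for a closed oriented Riemannian $2m$-manifold $M$,
\[
\chi(M) \;=\; \int_M \mathrm{Pf}\!\left(\frac{\Omega}{2\pi}\right),
\]
where $\Omega$ is the curvature $2$-form of the Levi-Civita connection. My strategy would be to verify pointwise that the Pfaffian integrand has the sign predicted by the conjecture under the given sectional curvature hypothesis. I would first handle the constant-curvature models, where the claim reduces to a direct linear-algebra computation, and the flat case, where the integrand vanishes identically so $\chi(M)=0$. In dimension $4$ I would use the explicit expression of the Pfaffian in terms of the Weyl tensor, the traceless Ricci tensor and the scalar curvature (Chern/Avez), from which the sign of the integrand can be read off under each curvature assumption. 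For the $\sec(M)\ge 0$ case one can alternatively invoke a Bochner argument on harmonic forms.

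For the aspherical part, which makes no curvature assumption, I would reduce the conjecture to the \emph{Singer Conjecture} via $L^2$-Betti numbers. By Atiyah's $L^2$-Euler-Poincar\'e formula,
\[
\chi(M) \;=\; \sum_{p=0}^{2m} (-1)^p\, b_p^{(2)}(\widetilde{M}).
\]
If one could establish, under the aspherical hypothesis, that $b_p^{(2)}(\widetilde{M})=0$ for every $p\ne m$, then $\chi(M)=(-1)^m b_m^{(2)}(\widetilde{M})$, and nonnegativity of $L^2$-Betti numbers yields $(-1)^m\chi(M)\ge 0$ immediately. I would attempt to verify the Singer Conjecture in whatever restricted setting is available: for locally symmetric spaces of non-compact type (Borel, Olbrich), for K\"ahler hyperbolic manifolds in the sense of Gromov where vanishing of sub-middle $L^2$-Betti numbers is proved by a Lefschetz argument, and for groups to which the Farrell--Jones machinery of Section~\ref{sec:Borel_Conjecture} applies, where one may hope to combine an equivariant index theorem with vanishing of higher $K$- and $L$-theoretic obstructions.

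The main obstacle is precisely the Singer Conjecture, together with a parallel obstruction on the curvature side: Geroch showed that in dimensions $2m\ge 6$ the Pfaffian of an algebraic curvature tensor is not sign-controlled pointwise by the sectional curvatures, so for $\sec(M)<0$ the naive Chern--Gauss--Bonnet argument cannot succeed in high dimension without some additional global input. Thus both prongs of the plan depend on genuinely analytic or global input --- either establishing Singer, or finding a global replacement for pointwise positivity of the Pfaffian --- that is presently out of reach. Accordingly, this strategy yields only conditional results: the Riemannian case in dimension~$4$ and for constant curvature unconditionally, and the aspherical case precisely for those fundamental groups for which one can separately prove Singer.
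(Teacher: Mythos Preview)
The statement you are addressing is a \emph{conjecture} in the paper, not a theorem; the paper offers no proof and none is known in general. There is therefore nothing to compare your attempt against, and your own final paragraph already says as much.

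That said, your reduction of the aspherical part to the Singer Conjecture via Atiyah's $L^2$-Euler--Poincar\'e formula is exactly the standard one, and is precisely why the paper lists the Singer Conjecture~\ref{con:Singer_Conjecture} immediately after the Hopf Conjecture: Singer implies the aspherical Hopf inequality, and the strict Singer statement for negative curvature implies the strict inequality $(-1)^m\chi(M)>0$ in that case. Your identification of Geroch's obstruction to a purely pointwise Chern--Gauss--Bonnet argument in dimensions $\ge 6$ is likewise correct and well known, as are the unconditional cases you list (dimension~$2$ and~$4$, constant curvature, the flat case, K\"ahler hyperbolic, locally symmetric).

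The one place your outline overreaches is the suggestion that the Farrell--Jones machinery of Section~\ref{sec:Borel_Conjecture} might feed into a proof of Singer. There is no known mechanism by which the $K$- or $L$-theoretic assembly maps control the vanishing of $L^2$-Betti numbers off the middle degree, and nothing in the paper hints at one; that remark should be dropped. Apart from this, your assessment of what is provable and what is genuinely blocked is accurate, and the honest conclusion is the one you reach: the conjecture is open.
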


\begin{conjecture}[Singer Conjecture]%
\label{con:Singer_Conjecture}
If $M$ is an aspherical closed manifold, then
$$b_p^{(2)}(\widetilde{M})  =  0 \hspace{10mm}
\mbox{if } 2p \not= \dim(M).$$
If $M$ is a closed connected Riemannian manifold with negative sectional
curvature, then
$$b_p^{(2)}(\widetilde{M})   \left\{
\begin{array}{lll}
= 0 & & \mbox{if } 2p \not= \dim(M);\\
> 0 & & \mbox{if } 2p = \dim(M).
\end{array}\right.$$
\end{conjecture}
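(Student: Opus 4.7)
The plan is to attack the two clauses of the Singer Conjecture separately, since they call for quite different tools. For the Riemannian negatively curved case I would pursue a Bochner/heat-kernel strategy. By Atiyah's $L^2$-index theorem $b_p^{(2)}(\widetilde{M})$ equals the von Neumann dimension of the space of $L^2$-harmonic $p$-forms on $\widetilde{M}$, so it suffices to show this space is trivial when $2p \ne \dim(M)$. The Weitzenb\"ock decomposition $\Delta_p = \nabla^*\nabla + \calr_p$ furnishes an algebraic curvature endomorphism $\calr_p$ of $\Lambda^p T^*\widetilde{M}$; the aim is to prove that on a simply-connected manifold of pinched negative sectional curvature $\calr_p$ is bounded below by a positive constant on $p$-forms for $p \ne \dim(M)/2$, mimicking Dodziuk's explicit computation for $\IH^n$. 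The positivity at $p = \dim(M)/2$ would then follow from the $L^2$-Euler--Poincar\'e formula $\chi(M) = \sum_p (-1)^p b_p^{(2)}(\widetilde{M})$ together with the Hopf part of Conjecture~\ref{con:Hopf_Conjecture}: the strict inequality $(-1)^{\dim(M)/2}\chi(M) > 0$ under strictly negative curvature forces the middle $L^2$-Betti number to be non-zero.

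For the purely aspherical case I have no Riemannian metric, so I would try to reduce to the geometric setting. The hyperbolization of Subsection~\ref{subsec:hyperbolization} converts an arbitrary aspherical closed manifold into one that is non-positively curved, but only up to oriented bordism, and $L^2$-Betti numbers are not bordism invariants, so this does not suffice by itself. A more promising route is L\"uck's approximation theorem: for residually finite $G = \pi_1(M)$, $b_p^{(2)}(\widetilde{M}) = \lim_i b_p(M_i)/[G:G_i]$ over a tower of finite normal covers. I would combine this with the Cheeger--Gromov vanishing theorem for groups containing an infinite amenable normal subgroup, and more generally with the $L^2$-acyclicity consequences of the Farrell--Jones Conjecture for the groups in the class of Theorem~\ref{the:status_of-Farrell-Jones}, trying to leverage the Poincar\'e-duality constraints on $G$ coming from Section~\ref{sec:Poincare_duality_groups}.

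The main obstacle is that on a general aspherical closed manifold there is no analytic quantity on $\widetilde{M}$ that separates the geometric dimension $\dim(M)$ from the purely algebraic invariants of $G$: the reflection group trick of Theorem~\ref{the:Reflection_group_trick} shows that any $G$ with a finite $BG$ occurs as a retract of the fundamental group of an aspherical closed manifold of arbitrary sufficiently high dimension, so the conclusion ``$b_p^{(2)}(\widetilde{M}) = 0$ for $2p \ne \dim(M)$'' must know about the ambient dimension even though the $L^2$-Betti numbers of $G$ itself do not. I therefore expect any proof of the aspherical clause to need genuinely new input beyond group-theoretic $L^2$-invariants, and any uniform proof of the curvature clause to resist a direct Bochner argument outside the rank-one symmetric and K\"ahler-hyperbolic cases, since controlling $\calr_p$ quantitatively in terms of sectional curvature alone is exactly what fails for indefinite curvature operators in the known partial results.
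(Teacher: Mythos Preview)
The statement you are trying to prove is not proved in the paper: it is Conjecture~\ref{con:Singer_Conjecture}, stated as an open problem with a reference to \cite{Lueck(2002),Lueck(2003h)} for its status. There is no proof in the paper to compare your attempt against, and the Singer Conjecture remains open in both clauses.

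Your write-up is honest about this---you explicitly flag the main obstacles and conclude that new input is needed---so it is better read as a survey of partial strategies than as a proof. Two specific points are worth noting. First, your plan to deduce the strict positivity of the middle $L^2$-Betti number in the negatively curved case from the $L^2$-Euler--Poincar\'e formula plus the Hopf Conjecture reverses the known implication: the Singer Conjecture implies the relevant cases of the Hopf Conjecture via $\chi(M)=\sum_p(-1)^p b_p^{(2)}(\widetilde{M})$, not the other way around, and the Hopf Conjecture is itself open, so this step is circular at the present state of knowledge. Second, the Weitzenb\"ock approach you sketch is exactly what is known to fail outside the rank-one symmetric and K\"ahler-hyperbolic settings (where Dodziuk, Donnelly--Xavier, and Gromov succeed), because negative sectional curvature alone does not force the curvature term $\calr_p$ to be positive on all $p$-forms with $2p\neq\dim(M)$; you acknowledge this in your final paragraph, but it means the first part of your plan is not a viable route to a general proof either.
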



\subsection{$L^2$-torsion and aspherical manifolds}
\label{subsec:L2-torsion_and_aspherical_manifolds}

\begin{conjecture}[$L^2$-torsion for aspherical manifolds]%
\label{con:L2-torsion_for_aspherical_manifolds}
If $M$ is an aspherical closed manifold of odd dimension, then
$\widetilde{M}$ is $\det$-$L^2$-acyclic  and
$$(-1)^{\frac{\dim(M)-1}{2}} \cdot \rho^{(2)}(\widetilde{M})  \ge 
0.$$
If $M$ is a closed connected Riemannian manifold of odd dimension with negative
sectional curvature, then
$\widetilde{M}$ is $\det$-$L^2$-acyclic  and
$$(-1)^{\frac{\dim(M)-1}{2}} \cdot \rho^{(2)}(\widetilde{M})  > 
0.$$
If $M$ is an aspherical closed manifold
whose fundamental group contains an amenable infinite normal subgroup,
then $\widetilde{M}$ is $\det$-$L^2$-acyclic and
$$\rho^{(2)}(\widetilde{M})  =  0.$$
\end{conjecture}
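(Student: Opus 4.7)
The plan is to attack the three clauses of Conjecture~\ref{con:L2-torsion_for_aspherical_manifolds} separately, since they have rather different character and each admits a different natural route.

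For the third clause (fundamental group contains an infinite amenable normal subgroup $H$), the Cheeger--Gromov vanishing theorem already gives $b_p^{(2)}(\widetilde{M}) = 0$ for all $p$, so the ingredients one needs to add are the determinant class condition and the vanishing of $\rho^{(2)}$ itself. I would use the short exact sequence $1 \to H \to G \to G/H \to 1$, realized on classifying spaces as a fibration $BH \to BG \to B(G/H)$, and appeal to the multiplicativity formula for $L^2$-torsion under such fibrations together with the fact that $B(\text{amenable infinite})$ is $\det$-$L^2$-acyclic with $\rho^{(2)} = 0$. The technical point is to verify the determinant class property in sufficient generality; this is known for the large class $\mathcal{C}$ of Theorem~\ref{the:status_of-Farrell-Jones} and should extend by a standard approximation argument.

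For the second clause (odd-dimensional negative sectional curvature), I would separate the locally symmetric case from the general one. For closed hyperbolic $(2k+1)$-manifolds the $L^2$-torsion has been computed explicitly and equals $(-1)^k c_k \cdot \mathrm{vol}(M)$ with a strictly positive universal constant $c_k$, which already gives the strict sign. For variable negative curvature I would pass to the analytic $L^2$-torsion (equal to the topological one by the $L^2$-Cheeger--M\"uller theorem) and try to prove pointwise positivity of the heat-kernel integrand on the universal cover using the strict sign of the curvature operator. The obstacle in this step is the absence of a closed-form local expression outside the locally symmetric setting, so one would likely need a deformation or monotonicity argument along a curvature-decreasing path.

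The first clause (general aspherical odd-dimensional closed manifold) is where I expect the principal obstruction to lie. Even $\det$-$L^2$-acyclicity is open in general; it would follow from the odd-dimensional case of the Singer Conjecture~\ref{con:Singer_Conjecture} combined with the determinant conjecture, the latter being known for the class $\mathcal{C}$ of Theorem~\ref{the:status_of-Farrell-Jones}. Granting that, my plan would be to establish the sign inductively along the construction of $\mathcal{C}$: the hyperbolic-group case reduces (at least morally) to the second clause; the product case is controlled by the identity $\rho^{(2)}(\widetilde{M_1 \times M_2}) = \chi(M_1)\rho^{(2)}(\widetilde{M_2}) + \chi(M_2)\rho^{(2)}(\widetilde{M_1})$ together with the Hopf Conjecture~\ref{con:Hopf_Conjecture}, since $(-1)^{\dim/2}\chi \ge 0$ in even dimensions lets one track signs cleanly when one factor is odd-dimensional and one is even; and the subgroup and colimit closures must be handled by a semicontinuity statement for $\rho^{(2)}$. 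The hardest step, and the genuine obstacle, is the colimit closure: $\rho^{(2)}$ is not known to be monotone or even well-behaved under directed colimits of hyperbolic groups, and without such a statement the inductive scheme cannot be closed off.
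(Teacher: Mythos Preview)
The statement you are attempting to prove is a \emph{conjecture}, not a theorem: the paper states it as Conjecture~\ref{con:L2-torsion_for_aspherical_manifolds} and offers no proof whatsoever, only a reference to~\cite{Lueck(2002),Lueck(2003h)} for further discussion of its status. There is therefore no proof in the paper to compare your attempt against.

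Your proposal is not a proof either, and to your credit you recognize this: you explicitly flag the general negative-curvature case (second clause) as lacking a known argument outside the locally symmetric setting, and you correctly identify the first clause as depending on the Singer Conjecture~\ref{con:Singer_Conjecture}, the Hopf Conjecture~\ref{con:Hopf_Conjecture}, and a semicontinuity property of $\rho^{(2)}$ under colimits that is not known. What you have written is a reasonable survey of partial results and a sketch of a research program, but it should not be presented as a proof proposal. Even the third clause, which is the closest to being a theorem, requires the determinant conjecture (or membership in a suitable class of groups) as an additional hypothesis that is not part of the conjecture's statement; the fibration argument you outline also needs $BG$ and $B(G/H)$ to have finite models, which is not automatic. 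If your intention was to summarize what is known toward the conjecture, that is fine, but label it as such rather than as a proof.
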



\subsection{Simplicial volume and $L^2$-invariants}
\label{subsec:simplicial_volume_and_L2-invariants}

\begin{conjecture}[Simplicial volume and $L^2$-invariants]
\label{con:simplicial_volume_and_L2-invariants}
Let $M$ be an aspherical closed orientable manifold.
Suppose that its simplicial volume $||M||$ vanishes. Then
$\widetilde{M}$ is of determinant class
and
\begin{eqnarray*}
b_p^{(2)}(\widetilde{M}) & = & 0 \hspace{5mm} \mbox{ for } p \ge 0;
\\
\rho^{(2)}(\widetilde{M}) & = & 0.
\end{eqnarray*}
\end{conjecture}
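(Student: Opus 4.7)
The plan is to route the conjecture through an intermediate quantity, Gromov's \emph{integral foliated simplicial volume} $|M|_{\calf}$, which interpolates between the classical simplicial volume $\|M\|$ and the $L^2$-invariants of $\widetilde{M}$. The argument then splits cleanly into a topological step (reducing $\|M\| = 0$ to $|M|_{\calf} = 0$) and an $L^2$-theoretic step (reading off the vanishing of $b_p^{(2)}(\widetilde{M})$ and $\rho^{(2)}(\widetilde{M})$ from $|M|_{\calf} = 0$).

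For the $L^2$-theoretic step I would invoke the Sauer-type inequality
$$b_p^{(2)}(\widetilde{M}) \;\leq\; |M|_{\calf} \qquad \text{for every } p \geq 0,$$
which is obtained by tensoring a foliated fundamental cycle of small $\ell^1$-norm against the group von Neumann algebra of $\pi_1(M)$, together with the analogous statement that the cellular $L^2$-chain complex of $\widetilde{M}$ is of determinant class whenever $|M|_{\calf} = 0$. This settles the determinant-class assertion and the vanishing of the $L^2$-Betti numbers, conditional on the next step.

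The heart of the matter is the implication $\|M\| = 0 \Longrightarrow |M|_{\calf} = 0$ for aspherical closed orientable $M$. I would attempt to produce it by feeding the geometric source of the vanishing of $\|M\|$ --- typically an amenable open cover of $M$ of multiplicity $\leq \dim(M)$ as in Gromov's vanishing theorem, or a nontrivial smooth $S^1$-action, or a self-map of degree $\ne \pm 1$ --- into a construction of parametrized fundamental cycles with coefficients in an essentially free probability measure-preserving action of $\pi_1(M)$. Concretely, one thickens a classical fundamental cycle of small norm by averaging its simplices along the measured orbit equivalence relation furnished by the amenable cover, and uses asphericity of $M$ to lift the result back to a chain on $B\pi_1(M) = M$.

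For the $L^2$-torsion assertion I would combine the vanishing of $b_p^{(2)}(\widetilde{M})$ with an approximation argument: choose a residual tower of finite regular covers $M_i \to M$, use foliated chains of sublinear $\ell^1$-norm from the previous step to build combinatorial representatives of $[M_i]$ of size $o([\pi_1(M):\pi_1(M_i)])$, and apply L\"uck's approximation theorem so that the normalized Fuglede--Kadison determinants converge to $1$, yielding $\rho^{(2)}(\widetilde{M}) = 0$. The hard part will be the geometric step above: the gap between $\|M\|$ and $|M|_{\calf}$ is exactly the passage from $\IR$-coefficients to $L^\infty$-coefficients on a standard Borel $\pi_1(M)$-space, and no general mechanism is known that converts a small classical fundamental cycle into a small foliated one. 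A complete proof would almost certainly demand new input, most plausibly a refinement of Gromov's multicomplex and diffusion-of-chains machinery adapted to measured group actions, rather than a formal consequence of the existing vanishing theorems.
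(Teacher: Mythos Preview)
The statement you are trying to prove is not a theorem in the paper but an open conjecture; the paper merely lists it among ``prominent conjectures about aspherical manifolds and $L^2$-invariants'' and gives no proof. So there is nothing to compare your argument against.

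That said, what you have written is not a proof but an outline of a research programme, and you yourself say as much in the last paragraph. The strategy via the integral foliated simplicial volume $|M|_{\calf}$ is indeed the standard line of attack (going back to Gromov, with the inequality $b_p^{(2)}(\widetilde{M}) \le |M|_{\calf}$ established by Schmidt), but the implication $\|M\| = 0 \Longrightarrow |M|_{\calf} = 0$ for general aspherical $M$ is genuinely open, and your sketch of ``averaging simplices along the measured orbit equivalence relation furnished by the amenable cover'' does not supply the missing mechanism. An amenable cover of multiplicity $\le \dim(M)$ only tells you that the comparison map $H_n^b(M;\IR) \to H_n(M;\IR)$ vanishes; it does not by itself produce a small $L^\infty(X;\IZ)$-fundamental cycle for any essentially free standard $\pi_1(M)$-space $X$.

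There are two further gaps in the $L^2$-torsion portion. First, your approximation argument through a residual tower of finite covers presupposes that $\pi_1(M)$ is residually finite, which is not part of the hypothesis and is known to fail for fundamental groups of some aspherical closed manifolds. Second, even granting residual finiteness, the approximation theorem you invoke is the one for $L^2$-Betti numbers; the corresponding statement for $L^2$-torsion (convergence of normalized logarithms of Fuglede--Kadison determinants) is itself an open problem in general and cannot be used as a black box here. The determinant-class assertion is likewise not a formal consequence of $|M|_{\calf} = 0$ in the way you suggest.
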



\subsection{Zero-in-the-Spectrum Conjecture}
\label{subsec:zero-in-the-Spectrum_Conjecture}

\begin{conjecture}[Zero-in-the-spectrum Conjecture]%
\label{con:zero-in-the-Spectrum_Conjecture}
Let $\widetilde{M}$ be a complete Riemannian manifold.
Suppose that $\widetilde{M}$ is the universal covering of an
aspherical closed Riemannian manifold $M$ (with the Riemannian metric
coming from $M$). Then for some $p \ge 0$ zero is  in the
Spectrum of the minimal closure
$$(\Delta_p)_{\min}\colon  \dom\bigl((\Delta_p)_{\min}\bigr) \subset
L^2\Omega^p(\widetilde{M}) \to L^2\Omega^p(\widetilde{M})$$ of the
Laplacian acting on smooth $p$-forms on $\widetilde{M}$.
\end{conjecture}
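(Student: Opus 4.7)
The plan is to reduce Conjecture~\ref{con:zero-in-the-Spectrum_Conjecture} to an assertion about the rational injectivity of the Baum--Connes assembly map, and then to verify the latter on as large a class of fundamental groups as possible. Concretely, let $M$ be an aspherical closed Riemannian manifold with $G = \pi_1(M)$; since $M$ is aspherical, $G$ is torsionfree by Lemma~\ref{lem:torsion_in_pi}. By Proposition~\ref{prop:Strong_Novikov-Conjecture_and_Zero_in_the_spectrum}, it suffices to show that the assembly map
$$K_n(BG) \to K_n(C^*_r(G))$$
is rationally injective for all $n \in \IZ$. Thus the entire problem is translated from spectral geometry of $\widetilde{M}$ to an index-theoretic statement about $G$ alone.

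First I would verify rational injectivity for the $C^*$-analogue of the class $\calc$ from Theorem~\ref{the:status_of-Farrell-Jones}. Hyperbolic groups satisfy the full Baum--Connes Conjecture (Mineyev--Yu, Lafforgue), and groups acting properly, isometrically and cocompactly on a complete proper $\CAT(0)$-space satisfy the strong Novikov Conjecture via Kasparov's dual Dirac method. Rational injectivity is preserved under the remaining closure operations: for $G_1 \ast G_2$ one uses the Mayer--Vietoris/Pimsner exact sequence; for $G_1 \times G_2$ one uses an external product argument; subgroups inherit injectivity by induction; and for directed colimits $\colim_{i \in I} G_i$ one uses continuity of $K$-homology combined with the Bartels--Echterhoff--L\"uck techniques alluded to in Remark~\ref{rem:directed_colimits_of_hyperbolic_groups}, where the analogous result for the Bost Conjecture is already established. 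Combining these steps gives Conjecture~\ref{con:zero-in-the-Spectrum_Conjecture} for every aspherical closed manifold whose fundamental group lies in this (enlarged) class $\calc$.

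Second, for aspherical manifolds whose fundamental group lies outside the known ``large class'' of Section~\ref{sec:Borel_Conjecture}, I would fall back on a more geometric analytic approach. The idea is to exploit $L^2$-Hodge theory directly on $\widetilde{M}$: if for some $p$ the $p$-th $L^2$-Betti number $b_p^{(2)}(\widetilde{M})$ is nonzero, then the Laplacian $(\Delta_p)_{\min}$ has nontrivial kernel and $0$ is in its spectrum a fortiori. So one only has to worry about the case that $\widetilde{M}$ is $L^2$-acyclic. Here one uses that the Euler characteristic computation $\chi(M) = \sum_{p} (-1)^p b_p^{(2)}(\widetilde{M})$ together with the aspherical Singer picture (Conjecture~\ref{con:Singer_Conjecture}) shows that $L^2$-acyclicity forces $\chi(M) = 0$, so $\dim(M)$ must be odd or the middle-dimensional vanishing must fail; in either case a careful analysis of the normal-operator spectrum at the bottom, in the spirit of Lott's original approach (cf.~the proof cited for Proposition~\ref{prop:Strong_Novikov-Conjecture_and_Zero_in_the_spectrum}), should produce a sequence of almost-harmonic $L^2$-forms witnessing $0 \in \mathrm{spec}((\Delta_p)_{\min})$ for some $p$.

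The hard part will be the truly exotic fundamental groups produced by the reflection-group trick of Theorem~\ref{the:Reflection_group_trick}, in particular those with unsolvable word problem or containing infinite divisible abelian subgroups as in Theorem~\ref{the:Exotic_fundamental_groups}, and even more so the groups containing expanders from Remark~\ref{rem:directed_colimits_of_hyperbolic_groups}, which are known to violate Baum--Connes with coefficients. For such groups neither the strong Novikov approach nor direct Hodge-theoretic input is currently available off the shelf, and a new idea is required---most plausibly by combining the retraction structure $r \circ i = \id_{BG}$ of the reflection group trick with functoriality of the analytic assembly map to transfer rational injectivity from the building-block $BG$ to the ambient aspherical manifold, reducing the problem to groups with finite classifying spaces that already lie in the extended class $\calc$.
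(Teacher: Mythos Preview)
The statement you are addressing is a \emph{conjecture}, not a theorem; the paper gives no proof of it and does not claim one. It is presented in Section~\ref{subsec:zero-in-the-Spectrum_Conjecture} purely as an open problem, followed only by Remark~\ref{rem:non_aspherical_counterexamples_to_the_ZISC} on non-aspherical counterexamples. There is therefore nothing in the paper to compare your proposal against.

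What you have written is not a proof but a research program, and you say so yourself: for the groups with expanders from Remark~\ref{rem:directed_colimits_of_hyperbolic_groups} you concede that ``a new idea is required,'' and your suggested reduction via the retraction $r \circ i = \id_{BG}$ goes the wrong way (rational injectivity for a retract does not yield rational injectivity for the ambient group). Your second step is also not an argument: you invoke the Singer Conjecture~\ref{con:Singer_Conjecture}, which is itself open, and then assert that ``a careful analysis \dots\ should produce a sequence of almost-harmonic $L^2$-forms'' without indicating any mechanism; in the $L^2$-acyclic case there is no known general way to manufacture such forms, which is precisely why the conjecture is open. The first step, via Proposition~\ref{prop:Strong_Novikov-Conjecture_and_Zero_in_the_spectrum}, is a correct reduction and does establish the conjecture for the groups covered there, but this is already the content of that proposition and of the discussion at the end of Section~\ref{sec:Novikov_Conjecture}; it does not constitute a proof of Conjecture~\ref{con:zero-in-the-Spectrum_Conjecture} in general.
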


\begin{remark}[Non-aspherical counterexamples to the Zero-in-the-Spectrum Conjecture]
\label{rem:non_aspherical_counterexamples_to_the_ZISC}
For all of the conjectures about aspherical spaces stated in this article it is obvious 
that they cannot be true if one drops the condition aspherical except for the  
zero-in-the-Spectrum Conjecture~\ref{con:zero-in-the-Spectrum_Conjecture}. 
Farber and Weinberger~\cite{Farber-Weinberger(2001)} 
gave the first example of a
closed Riemannian manifold for which zero is not in the spectrum
of the minimal closure
$(\Delta_p)_{\min}\colon  \dom\left((\Delta_p)_{\min}\right) \subset
L^2\Omega^p(\widetilde{M}) \to L^2\Omega^p(\widetilde{M})$ of the
Laplacian acting on smooth $p$-forms on $\widetilde{M}$ for each $p \ge 0$.
The construction by Higson, Roe and Schick~\cite{Higson-Roe-Schick(2001)}
yields a plenty of such counterexamples. But there are no aspherical
counterexamples known.
\end{remark}


\typeout{--------------------   Section 10 ---------------------------------------}

\section{The universe of closed  manifolds}
\label{sec:The_universe_of_closed_manifolds}

At the end we describe (winking) our universe of closed manifolds.

The idea of a random group has successfully been used to construct
groups with certain properties, see for
instance~\cite{Arzhantseva-Delzant(2008)}, \cite{Ghys(2004)},
\cite[9.B on pages273ff]{Gromov(1993)},
\cite{Gromov(2003)}, \cite{Ollivier(2005)},\cite{Pansu(2003)},
\cite{Silberman(2003)}  and~\cite{Zuk(2003)}.   
In a precise statistical sense almost all finitely
presented groups are hyperbolic see~\cite{Olshanskii(1992)}.
One can actually show that in a
precise statistical sense almost all finitely presented groups are
torsionfree hyperbolic and in particular have a finite model for their
classifying space. In most cases it is given by the limit for $n \to \infty$ of
the quotient of the number of finitely presented groups with a certain
property (P) which are given by a presentation satisfying a certain
condition $C_n$ by the number of all finitely presented groups which
are given by a presentation satisfying  condition $C_n$.

It is not clear what it means in a precise sense to talk about a
random closed manifold.  Nevertheless, the author's intuition is that almost all
closed manifolds are aspherical.  (A related question would be whether
a random closed smooth manifold admits a Riemannian metric with
non-positive sectional curvature.)  This intuition is supported by
Remark~\ref{rem:Characteristic_numbers_and_aspherical_manifolds}. It
is certainly true in dimension $2$ since only finitely many closed
surfaces are not aspherical.  The characterization of closed
$3$-dimensional manifolds in Subsection~\ref{subsec:Low-dimensions}
seems to fit as well. In the sequel we assume that this (vague) intuition is
correct.

If we combine these considerations, we get that almost all closed
manifolds are aspherical and have a hyperbolic fundamental group.
Since except in dimension $4$ the Borel Conjecture is known in this
case by Lemma~\ref{lem:FJC_implies_BC},
Remark~\ref{rem:The_Borel_Conjecture_in_low_dimensions}
and  Theorem~\ref{the:status_of-Farrell-Jones}, we get as a
consequence that almost almost all closed manifolds are aspherical and
topologically rigid.

A closed manifold $M$ is called \emph{asymmetric} if every finite
group which acts effectively on $M$ is trivial.  This is equivalent to
the statement that for any choice of Riemannian metric on $M$ the
group of isometries is trivial (see~\cite[Introduction]{Kreck(2008)}).
A survey on asymmetric closed
manifolds can be found in~\cite{Puppe(2007)}. 
The first constructions of asymmetric closed aspherical
manifolds are due to Connor-Raymond-Weinberger~\cite {Conner+Raymond+Weinberger(1972)}.
The first simply-connected asymmetric manifold has been
constructed by Kreck~\cite{Kreck(2008)} answering a question of
Raymond and Schultz~\cite[page~260]{Browder-Hsiang(1978)} which was repeated
by Adem and Davis~\cite {Adem-Davis(2002)} in their problem list. 
Raymond and Schultz expressed also their feeling that a random manifold
should be asymmetric. Borel has shown that
an aspherical closed manifold is asymmetric 
if its fundamental group is
centerless and its outer automorphism group is torsionfree
(see the manuscript ``On periodic maps of certain $K(\pi,1)$'' 
in~\cite[pages~57--60]{Borel(1983_colIII)}).

This leads to the intuitive statement:
\begin{quote}
  Almost all closed manifolds are aspherical, topologically
  rigid and asymmetric.
\end{quote}

In particular almost every closed manifold is determined up to
homeomorphism by its fundamental group.

This is \---- at least on the first glance \---- surprising since
often our favorite manifolds are not asymmetric and not determined by
their fundamental group. There are prominent manifolds such as lens
spaces which are homotopy equivalent but not homeomorphic.  There seem
to be plenty of simply connected manifolds. So why do human beings
may have the feeling that the universe of closed manifolds described above
is different from their expectation?

If one asks people for the most prominent closed manifold, most people
name the standard sphere. It is interesting that the $n$-dimensional
standard sphere $S^n$ can be characterized among (simply connected) closed Riemannian
manifolds of dimension $n$ by the property that its isometry group has
maximal dimension. More precisely, if $M$ is a closed $n$-dimensional smooth manifold,
then the dimension of its isometry group for any Riemannian metric is bounded
by $n(n+1)/2$ and the maximum $n(n+1)/2$ is attained if and only if
$M$ is diffeomorphic to $S^n$ or $\IR\IP^n$; see Hsiang~\cite{Hsiang(1967)}, 
where the Ph.D-thesis of Eisenhart is cited and the dimension of the isometry group of
exotic spheres is investigated. It is likely that the
human taste whether a geometric object is beautiful is closely related
to the question how many symmetries it admits. In general it seems to
be the case that a human being is attracted by unusual representatives
among mathematical objects such as groups or closed manifolds and not
by the generic ones.  In group theory it is clear that random groups
can have very strange properties and that these groups are to some
extend scary. The analogous statement seems to hold for closed
topological manifolds.

At the time of writing the author cannot really name a group which
could be a potential counterexample to the Farrell-Jones Conjecture or
other conjectures discussed in this article. But the author has the
feeling that nevertheless the class of groups, for which we can prove
the conjecture and which is for ``human standards'' quite large, is
only a very tiny portion of the whole universe of groups and the
question whether these conjectures are true for all groups is
completely open.

Here is an interesting parallel to our actual universe.  If you
materialize at a random point in the universe it will be very cold and
nothing will be there. There is no interaction between different
random points, i.e., it is rigid.  A human being will not like this
place, actually even worse, it cannot exist at such a random place.
But there are unusual rare non-generic points in the universe, where
human beings can exist such as the surface of our planet and there a
lot of things and interactions are happening.  And human beings tend
to think that the rest of the universe looks like the place they are
living in and cannot really comprehend the rest of the universe.



\typeout{-------------------- References -------------------------------}



\begin{thebibliography}{100}

\bibitem{Adem-Davis(2002)}
A.~Adem and J.~F. Davis.
\newblock Topics in transformation groups.
\newblock In {\em Handbook of geometric topology}, pages 1--54. North-Holland,
  Amsterdam, 2002.

\bibitem{Arzhantseva-Delzant(2008)}
G.~Arzhantseva and T.~Delzant.
\newblock Examples of random groups.
\newblock Preprint, 2008.

\bibitem{Bartels-Echterhoff-Lueck(2008colim)}
A.~Bartels, S.~Echterhoff, and W.~L\"uck.
\newblock Inheritance of isomorphism conjectures under colimits.
\newblock In Cortinaz, Cuntz, Karoubi, Nest, and Weibel, editors, {\em
  {K}-Theory and noncommutative geometry}, EMS-Series of Congress Reports,
  pages 41--70. European Mathematical Society, 2008.

\bibitem{Bartels-Lueck(2009Borelhyp)}
A.~Bartels and W.~L\"uck.
\newblock The {B}orel conjecture for hyperbolic and {CAT(0)}-groups.
\newblock Preprintreihe SFB 478 --- Geometrische Strukturen in der Mathematik,
  Heft 506 M\"unster, arXiv:0901.0442v1 [math.GT], 2009.

\bibitem{Bartels-Lueck-Reich(2008hyper)}
A.~Bartels, W.~L{\"u}ck, and H.~Reich.
\newblock The {$K$}-theoretic {F}arrell-{J}ones conjecture for hyperbolic
  groups.
\newblock {\em Invent. Math.}, 172(1):29--70, 2008.

\bibitem{Bartels-Lueck-Reich(2008appl)}
A.~Bartels, W.~L\"uck, and H.~Reich.
\newblock On the {F}arrell-{J}ones {C}onjecture and its applications.
\newblock {\em Journal of Topology}, 1:57--86, 2008.

\bibitem{Baum-Connes-Higson(1994)}
P.~Baum, A.~Connes, and N.~Higson.
\newblock Classifying space for proper actions and ${K}$-theory of group
  ${C}\sp \ast$-algebras.
\newblock In {\em $C\sp \ast$-algebras: 1943--1993 (San Antonio, TX, 1993)},
  pages 240--291. Amer. Math. Soc., Providence, RI, 1994.

\bibitem{Belegradek(2006)}
I.~Belegradek.
\newblock Aspherical manifolds, relative hyperbolicity, simplicial volume and
  assembly maps.
\newblock {\em Algebr. Geom. Topol.}, 6:1341--1354 (electronic), 2006.

\bibitem{Bestvina-Mess(1991)}
M.~Bestvina and G.~Mess.
\newblock The boundary of negatively curved groups.
\newblock {\em J. Amer. Math. Soc.}, 4(3):469--481, 1991.

\bibitem{Bieri+Eckmann(1973)}
R.~Bieri and B.~Eckmann.
\newblock Groups with homological duality generalizing {P}oincar\'e duality.
\newblock {\em Invent. Math.}, 20:103--124, 1973.

\bibitem{Bieri+Eckmann(1974)}
R.~Bieri and B.~Eckmann.
\newblock Finiteness properties of duality groups.
\newblock {\em Comment. Math. Helv.}, 49:74--83, 1974.

\bibitem{Borel(1983_colIII)}
A.~Borel.
\newblock {\em {\OE}uvres: collected papers. {V}ol. {III}, 1969--1982}.
\newblock Springer-Verlag, Berlin, 1983.

\bibitem{Browder-Hsiang(1978)}
W.~Browder and W.~C. Hsiang.
\newblock Some problems on homotopy theory manifolds and transformation groups.
\newblock In {\em Algebraic and geometric topology ({P}roc. {S}ympos. {P}ure
  {M}ath., {S}tanford {U}niv., {S}tanford, {C}alif., 1976), {P}art 2}, Proc.
  Sympos. Pure Math., XXXII, pages 251--267. Amer. Math. Soc., Providence,
  R.I., 1978.

\bibitem{Brown(1982)}
K.~S. Brown.
\newblock {\em Cohomology of groups}, volume~87 of {\em Graduate Texts in
  Mathematics}.
\newblock Springer-Verlag, New York, 1982.

\bibitem{Bryant-Ferry-Mio-Weinberger(1996)}
J.~Bryant, S.~Ferry, W.~Mio, and S.~Weinberger.
\newblock Topology of homology manifolds.
\newblock {\em Ann. of Math. (2)}, 143(3):435--467, 1996.

\bibitem{Bryant-Ferry-Mio-Weinberger(2007)}
J.~Bryant, S.~Ferry, W.~Mio, and S.~Weinberger.
\newblock Desingularizing homology manifolds.
\newblock {\em Geom. Topol.}, 11:1289--1314, 2007.

\bibitem{Cannon(1991)}
J.~W. Cannon.
\newblock The theory of negatively curved spaces and groups.
\newblock In {\em Ergodic theory, symbolic dynamics, and hyperbolic spaces
  (Trieste, 1989)}, Oxford Sci. Publ., pages 315--369. Oxford Univ. Press, New
  York, 1991.

\bibitem{Casson-Jungreis(1994)}
A.~Casson and D.~Jungreis.
\newblock Convergence groups and {S}eifert fibered $3$-manifolds.
\newblock {\em Invent. Math.}, 118(3):441--456, 1994.

\bibitem{Charney-Davis(1995)}
R.~M. Charney and M.~W. Davis.
\newblock Strict hyperbolization.
\newblock {\em Topology}, 34(2):329--350, 1995.

\bibitem{Conner+Raymond+Weinberger(1972)}
P.~E. Conner, F.~Raymond, and P.~J. Weinberger.
\newblock Manifolds with no periodic maps.
\newblock In {\em Proceedings of the {S}econd {C}onference on {C}ompact
  {T}ransformation {G}roups ({U}niv. {M}assachusetts, {A}mherst, {M}ass.,
  1971), {P}art {II}}, pages 81--108. Lecture Notes in Math., Vol. 299, Berlin,
  1972. Springer.

\bibitem{Daverman(1986)}
R.~J. Daverman.
\newblock {\em Decompositions of manifolds}, volume 124 of {\em Pure and
  Applied Mathematics}.
\newblock Academic Press Inc., Orlando, FL, 1986.

\bibitem{Davis(2002exotic)}
M.~Davis.
\newblock Exotic aspherical manifolds.
\newblock In T.~Farrell, L.~G\"ottsche, and W.~L{\"u}ck, editors, {\em High
  dimensional manifold theory}, number~9 in ICTP Lecture Notes, pages 371--404.
  Abdus Salam International Centre for Theoretical Physics, Trieste, 2002.
\newblock Proceedings of the summer school ``High dimensional manifold theory''
  in Trieste May/June 2001, Number~2.
  http://www.ictp.trieste.it/\~{}pub\_off/lectures/vol9.html.

\bibitem{Davis(1983)}
M.~W. Davis.
\newblock Groups generated by reflections and aspherical manifolds not covered
  by {E}uclidean space.
\newblock {\em Ann. of Math. (2)}, 117(2):293--324, 1983.

\bibitem{Davis(2008cox)}
M.~W. Davis.
\newblock {\em The geometry and topology of {C}oxeter groups}, volume~32 of
  {\em London Mathematical Society Monographs Series}.
\newblock Princeton University Press, Princeton, NJ, 2008.

\bibitem{Davis-Januszkiewicz(1991)}
M.~W. Davis and T.~Januszkiewicz.
\newblock Hyperbolization of polyhedra.
\newblock {\em J. Differential Geom.}, 34(2):347--388, 1991.

\bibitem{Eckmann(1987)}
B.~Eckmann.
\newblock Poincar\'e duality groups of dimension two are surface groups.
\newblock In {\em Combinatorial group theory and topology (Alta, Utah, 1984)},
  volume 111 of {\em Ann. of Math. Stud.}, pages 35--51. Princeton Univ. Press,
  Princeton, NJ, 1987.

\bibitem{Eckmann-Linnell(1983)}
B.~Eckmann and P.~A. Linnell.
\newblock Poincar\'e duality groups of dimension two. {I}{I}.
\newblock {\em Comment. Math. Helv.}, 58(1):111--114, 1983.

\bibitem{Eckmann+Mueller(1980)}
B.~Eckmann and H.~M{\"u}ller.
\newblock Poincar\'e duality groups of dimension two.
\newblock {\em Comment. Math. Helv.}, 55(4):510--520, 1980.

\bibitem{Eilenberg+Ganea(1957)}
S.~Eilenberg and T.~Ganea.
\newblock On the {L}usternik-{S}chnirelmann category of abstract groups.
\newblock {\em Ann. of Math. (2)}, 65:517--518, 1957.

\bibitem{Farber-Weinberger(2001)}
M.~Farber and S.~Weinberger.
\newblock On the zero-in-the-spectrum conjecture.
\newblock {\em Ann. of Math. (2)}, 154(1):139--154, 2001.

\bibitem{Farrell-Jones(1989b)}
F.~T. Farrell and L.~E. Jones.
\newblock Negatively curved manifolds with exotic smooth structures.
\newblock {\em J. Amer. Math. Soc.}, 2(4):899--908, 1989.

\bibitem{Farrell-Jones(1990)}
F.~T. Farrell and L.~E. Jones.
\newblock Rigidity and other topological aspects of compact nonpositively
  curved manifolds.
\newblock {\em Bull. Amer. Math. Soc. (N.S.)}, 22(1):59--64, 1990.

\bibitem{Farrell-Jones(1990b)}
F.~T. Farrell and L.~E. Jones.
\newblock Rigidity in geometry and topology.
\newblock In {\em Proceedings of the International Congress of Mathematicians,
  Vol.\ I, II (Kyoto, 1990)}, pages 653--663, Tokyo, 1991. Math. Soc. Japan.

\bibitem{Farrell-Jones(1993a)}
F.~T. Farrell and L.~E. Jones.
\newblock Isomorphism conjectures in algebraic ${K}$-theory.
\newblock {\em J. Amer. Math. Soc.}, 6(2):249--297, 1993.

\bibitem{Farrell-Jones(1993c)}
F.~T. Farrell and L.~E. Jones.
\newblock Topological rigidity for compact non-positively curved manifolds.
\newblock In {\em Differential geometry: Riemannian geometry (Los Angeles, CA,
  1990)}, pages 229--274. Amer. Math. Soc., Providence, RI, 1993.

\bibitem{Farrell-Jones(1998)}
F.~T. Farrell and L.~E. Jones.
\newblock Rigidity for aspherical manifolds with $\pi\sb 1\subset {GL}\sb
  m(\mathbb{{R}})$.
\newblock {\em Asian J. Math.}, 2(2):215--262, 1998.

\bibitem{Ferry-Ranicki(2001)}
S.~Ferry and A.~Ranicki.
\newblock A survey of {W}all's finiteness obstruction.
\newblock In {\em Surveys on surgery theory, Vol. 2}, volume 149 of {\em Ann.
  of Math. Stud.}, pages 63--79. Princeton Univ. Press, Princeton, NJ, 2001.

\bibitem{Ferry-Pedersen(1995a)}
S.~C. Ferry and E.~K. Pedersen.
\newblock Epsilon surgery theory.
\newblock In {\em Novikov conjectures, index theorems and rigidity, Vol.\ 2
  (Oberwolfach, 1993)}, pages 167--226. Cambridge Univ. Press, Cambridge, 1995.

\bibitem{Ferry-Ranicki-Rosenberg(1995)}
S.~C. Ferry, A.~A. Ranicki, and J.~Rosenberg.
\newblock A history and survey of the {N}ovikov conjecture.
\newblock In {\em Novikov conjectures, index theorems and rigidity, Vol.\ 1
  (Oberwolfach, 1993)}, pages 7--66. Cambridge Univ. Press, Cambridge, 1995.

\bibitem{Ferry-Ranicki-Rosenberg(1995b)}
S.~C. Ferry, A.~A. Ranicki, and J.~Rosenberg, editors.
\newblock {\em Novikov conjectures, index theorems and rigidity. {V}ol. 2}.
\newblock Cambridge University Press, Cambridge, 1995.
\newblock Including papers from the conference held at the Mathematisches
  Forschungsinstitut Oberwolfach, Oberwolfach, September 6--10, 1993.

\bibitem{Freden(1995)}
E.~M. Freden.
\newblock Negatively curved groups have the convergence property. {I}.
\newblock {\em Ann. Acad. Sci. Fenn. Ser. A I Math.}, 20(2):333--348, 1995.

\bibitem{Freedman(1982)}
M.~H. Freedman.
\newblock The topology of four-dimensional manifolds.
\newblock {\em J. Differential Geom.}, 17(3):357--453, 1982.

\bibitem{Freedman(1983)}
M.~H. Freedman.
\newblock The disk theorem for four-dimensional manifolds.
\newblock In {\em Proceedings of the International Congress of Mathematicians,
  Vol.\ 1, 2 (Warsaw, 1983)}, pages 647--663, Warsaw, 1984. PWN.

\bibitem{Gabai(1991)}
D.~Gabai.
\newblock Convergence groups are {F}uchsian groups.
\newblock {\em Bull. Amer. Math. Soc. (N.S.)}, 25(2):395--402, 1991.

\bibitem{Gallot-Hulin-Lafontaine(1987)}
S.~Gallot, D.~Hulin, and J.~Lafontaine.
\newblock {\em Riemannian geometry}.
\newblock Springer-Verlag, Berlin, 1987.

\bibitem{Ghys(2004)}
{\'E}.~Ghys.
\newblock Groupes al\'eatoires (d'apr\`es {M}isha {G}romov,{$\dots$}).
\newblock {\em Ast\'erisque}, 294:viii, 173--204, 2004.

\bibitem{Gottlieb(1979)}
D.~H. Gottlieb.
\newblock Poincar\'e duality and fibrations.
\newblock {\em Proc. Amer. Math. Soc.}, 76(1):148--150, 1979.

\bibitem{Gromoll-Wolf(1971)}
D.~Gromoll and J.~A. Wolf.
\newblock Some relations between the metric structure and the algebraic
  structure of the fundamental group in manifolds of nonpositive curvature.
\newblock {\em Bull. Amer. Math. Soc.}, 77:545--552, 1971.

\bibitem{Gromov(1987)}
M.~Gromov.
\newblock Hyperbolic groups.
\newblock In {\em Essays in group theory}, pages 75--263. Springer-Verlag, New
  York, 1987.

\bibitem{Gromov(1993)}
M.~Gromov.
\newblock Asymptotic invariants of infinite groups.
\newblock In {\em Geometric group theory, Vol.\ 2 (Sussex, 1991)}, pages
  1--295. Cambridge Univ. Press, Cambridge, 1993.

\bibitem{Gromov(2003)}
M.~Gromov.
\newblock Random walk in random groups.
\newblock {\em Geom. Funct. Anal.}, 13(1):73--146, 2003.

\bibitem{Helgason(2001)}
S.~Helgason.
\newblock {\em Differential geometry, {L}ie groups, and symmetric spaces}.
\newblock American Mathematical Society, Providence, RI, 2001.
\newblock Corrected reprint of the 1978 original.

\bibitem{Hempel(1976)}
J.~Hempel.
\newblock {\em $3$-{M}anifolds}.
\newblock Princeton University Press, Princeton, N. J., 1976.
\newblock Ann. of Math. Studies, No. 86.

\bibitem{Higson-Lafforgue-Skandalis(2002)}
N.~Higson, V.~Lafforgue, and G.~Skandalis.
\newblock Counterexamples to the {B}aum-{C}onnes conjecture.
\newblock {\em Geom. Funct. Anal.}, 12(2):330--354, 2002.

\bibitem{Higson-Roe-Schick(2001)}
N.~Higson, J.~Roe, and T.~Schick.
\newblock Spaces with vanishing $l\sp 2$-homology and their fundamental groups
  (after {F}arber and {W}einberger).
\newblock {\em Geom. Dedicata}, 87(1-3):335--343, 2001.

\bibitem{Hirzebruch(1966)}
F.~Hirzebruch.
\newblock {\em Neue topologische Methoden in der algebraischen Geometrie},
  volume 131 of {\em Grundlehren}.
\newblock Springer, 1966.

\bibitem{Hirzebruch(1970)}
F.~Hirzebruch.
\newblock The signature theorem: reminiscences and recreation.
\newblock In {\em Prospects in mathematics (Proc. Sympos., Princeton Univ.,
  Princeton, N.J., 1970)}, pages 3--31. Ann. of Math. Studies, No. 70.
  Princeton Univ. Press, Princeton, N.J., 1971.

\bibitem{Hsiang(1967)}
W.-y. Hsiang.
\newblock On the bound of the dimensions of the isometry groups of all possible
  riemannian metrics on an exotic sphere.
\newblock {\em Ann. of Math. (2)}, 85:351--358, 1967.

\bibitem{Johnson+Wall(1972)}
F.~E.~A. Johnson and C.~T.~C. Wall.
\newblock On groups satisfying {P}oincar\'e duality.
\newblock {\em Ann. of Math. (2)}, 96:592--598, 1972.

\bibitem{Kapovich+Benakli(2002)}
I.~Kapovich and N.~Benakli.
\newblock Boundaries of hyperbolic groups.
\newblock In {\em Combinatorial and geometric group theory (New York,
  2000/Hoboken, NJ, 2001)}, volume 296 of {\em Contemp. Math.}, pages 39--93.
  Amer. Math. Soc., Providence, RI, 2002.

\bibitem{Kirby-Siebenmann(1977)}
R.~C. Kirby and L.~C. Siebenmann.
\newblock {\em Foundational essays on topological manifolds, smoothings, and
  triangulations}.
\newblock Princeton University Press, Princeton, N.J., 1977.
\newblock With notes by J.~Milnor and M.~F.~Atiyah, Annals of Mathematics
  Studies, No. 88.

\bibitem{Kleiner-Lott(2006)}
B.~Kleiner and J.~Lott.
\newblock Notes on {P}erelman's papers.
\newblock Preprint, arXiv:math.DG/0605667v2, 2006.

\bibitem{Kreck(2008)}
M.~Kreck.
\newblock Simply-connected asymmetric manifolds.
\newblock Preprint, to appear in Journal of Topology, 2008.

\bibitem{Kreck-Lueck(2005)}
M.~Kreck and W.~L{\"u}ck.
\newblock {\em The {N}ovikov conjecture: Geometry and algebra}, volume~33 of
  {\em Oberwolfach Seminars}.
\newblock Birkh\"auser Verlag, Basel, 2005.

\bibitem{Kreck-Lueck(2009nonasph)}
M.~Kreck and W.~L\"uck.
\newblock Topological rigidity for non-aspherical manifolds.
\newblock {\em Pure and Applied Mathematics Quarterly}, 5 (3):873--914, 2009.
\newblock special issue in honor of Friedrich Hirzebruch.

\bibitem{Leeb(1995)}
B.~Leeb.
\newblock $3$-manifolds with(out) metrics of nonpositive curvature.
\newblock {\em Invent. Math.}, 122(2):277--289, 1995.

\bibitem{Lott(1996b)}
J.~Lott.
\newblock The zero-in-the-spectrum question.
\newblock {\em Enseign. Math. (2)}, 42(3-4):341--376, 1996.

\bibitem{Lueck(2002)}
W.~L{\"u}ck.
\newblock {\em {$L\sp 2$}-{I}nvariants: {T}heory and {A}pplications to
  {G}eometry and \mbox{{$K$}-{T}heory}}, volume~44 of {\em Ergebnisse der
  Mathematik und ihrer Grenzgebiete. 3.~Folge. A Series of Modern Surveys in
  Mathematics [Results in Mathematics and Related Areas. 3rd Series. A Series
  of Modern Surveys in Mathematics]}.
\newblock Springer-Verlag, Berlin, 2002.

\bibitem{Lueck(2003h)}
W.~L{\"u}ck.
\newblock {$L^2$}-invariants from the algebraic point of view.
\newblock Preprintreihe SFB 478 --- Geometrische Strukturen in der Mathematik,
  Heft 295, M\"unster, arXiv:math.GT/0310489 v1, to appear in the Proceedings
  of the Symposium Geometry and Cohomology in Group Theory Durham, July 2003,
  2003.

\bibitem{Lueck-Reich(2005)}
W.~L{\"u}ck and H.~Reich.
\newblock The {B}aum-{C}onnes and the {F}arrell-{J}ones conjectures in {$K$}-
  and {$L$}-theory.
\newblock In {\em Handbook of $K$-theory. Vol. 1, 2}, pages 703--842. Springer,
  Berlin, 2005.

\bibitem{Mess(1991)}
G.~Mess.
\newblock Examples of {P}oincar\'e duality groups.
\newblock {\em Proc. Amer. Math. Soc.}, 110(4):1145--1146, 1990.

\bibitem{Mislin(1995)}
G.~Mislin.
\newblock Wall's finiteness obstruction.
\newblock In {\em Handbook of algebraic topology}, pages 1259--1291.
  North-Holland, Amsterdam, 1995.

\bibitem{Morgan+Tian(2008_Ricci_poincare)}
J.~Morgan and G.~Tian.
\newblock {\em Ricci flow and the {P}oincar\'e conjecture}, volume~3 of {\em
  Clay Mathematics Monographs}.
\newblock American Mathematical Society, Providence, RI, 2007.

\bibitem{Morgan-Tian(2008)}
J.~Morgan and G.~Tian.
\newblock Completion of the proof of the {G}eometrization {C}onjecture.
\newblock Preprint, arXiv:0809.4040v1 [math.DG], 2008.

\bibitem{Novikov(1965c)}
S.~P. Novikov.
\newblock The homotopy and topological invariance of certain rational
  {P}ontrjagin classes.
\newblock {\em Dokl. Akad. Nauk SSSR}, 162:1248--1251, 1965.

\bibitem{Novikov(1965b)}
S.~P. Novikov.
\newblock Topological invariance of rational classes of {P}ontrjagin.
\newblock {\em Dokl. Akad. Nauk SSSR}, 163:298--300, 1965.

\bibitem{Novikov(1966)}
S.~P. Novikov.
\newblock On manifolds with free abelian fundamental group and their
  application.
\newblock {\em Izv. Akad. Nauk SSSR Ser. Mat.}, 30:207--246, 1966.

\bibitem{Novikov(1970b)}
S.~P. Novikov.
\newblock Algebraic construction and properties of {H}ermitian analogs of
  {$K$}-theory over rings with involution from the viewpoint of {H}amiltonian
  formalism. {A}pplications to differential topology and the theory of
  characteristic classes. {I}. {II}.
\newblock {\em Izv. Akad. Nauk SSSR Ser. Mat.}, 34:253--288; ibid. {\bf 34}
  (1970), 475--500, 1970.

\bibitem{Ollivier(2005)}
Y.~Ollivier.
\newblock {\em A {J}anuary 2005 invitation to random groups}, volume~10 of {\em
  Ensaios Matem\'aticos [Mathematical Surveys]}.
\newblock Sociedade Brasileira de Matem\'atica, Rio de Janeiro, 2005.

\bibitem{Olshanskii-Osin-Sapir(2007)}
A.~Y. Ol'shankskii, D.~Osin, and M.~Sapir.
\newblock Lacunary hyperbolic groups.
\newblock arXiv:math.GR/0701365v1, 2007.

\bibitem{Olshanskii(1992)}
A.~Y. Ol'shanskii.
\newblock Almost every group is hyperbolic.
\newblock {\em Internat. J. Algebra Comput.}, 2(1):1--17, 1992.

\bibitem{Pansu(2003)}
P.~Pansu.
\newblock Groupes al\'eatoires.
\newblock In {\em Groupes et g\'eom\'etrie}, volume 2003 of {\em SMF Journ.
  Annu.}, pages 37--49. Soc. Math. France, Paris, 2003.

\bibitem{Puppe(2007)}
V.~Puppe.
\newblock Do manifolds have little symmetry?
\newblock {\em J. Fixed Point Theory Appl.}, 2(1):85--96, 2007.

\bibitem{Quinn(1972)}
F.~Quinn.
\newblock Surgery on {P}oincar\'e and normal spaces.
\newblock {\em Bull. Amer. Math. Soc.}, 78:262--267, 1972.

\bibitem{Quinn(1983a)}
F.~Quinn.
\newblock Resolutions of homology manifolds and the topological
  characterization of manifolds.
\newblock {\em Inventiones Mathematicae}, 72:267--284, 1983.

\bibitem{Quinn(1987_resolution)}
F.~Quinn.
\newblock An obstruction to the resolution of homology manifolds.
\newblock {\em Michigan Math. J.}, 34(2):285--291, 1987.

\bibitem{Ranicki(1992)}
A.~A. Ranicki.
\newblock {\em Algebraic ${L}$-theory and topological manifolds}.
\newblock Cambridge University Press, Cambridge, 1992.

\bibitem{Rosenberg(1983)}
J.~Rosenberg.
\newblock ${C}\sp{\ast} $-algebras, positive scalar curvature, and the
  {N}ovikov conjecture.
\newblock {\em Inst. Hautes \'Etudes Sci. Publ. Math.}, 58:197--212 (1984),
  1983.

\bibitem{Scott(1983)}
P.~Scott.
\newblock The geometries of $3$-manifolds.
\newblock {\em Bull. London Math. Soc.}, 15(5):401--487, 1983.

\bibitem{Silberman(2003)}
L.~Silberman.
\newblock Addendum to: ``{R}andom walk in random groups'' [{G}eom.\ {F}unct.\
  {A}nal.\ {\bf 13} (2003), no.\ 1, 73--146; mr1978492] by {M}. {G}romov.
\newblock {\em Geom. Funct. Anal.}, 13(1):147--177, 2003.

\bibitem{Stallings(1968)}
J.~R. Stallings.
\newblock On torsion-free groups with infinitely many ends.
\newblock {\em Ann. of Math. (2)}, 88:312--334, 1968.

\bibitem{Swan(1969)}
R.~G. Swan.
\newblock Groups of cohomological dimension one.
\newblock {\em J. Algebra}, 12:585--610, 1969.

\bibitem{Turaev(1988)}
V.~G. Turaev.
\newblock Homeomorphisms of geometric three-dimensional manifolds.
\newblock {\em Mat. Zametki}, 43(4):533--542, 575, 1988.
\newblock translation in Math. Notes 43 (1988), no. 3-4, 307--312.

\bibitem{Wall(1965a)}
C.~T.~C. Wall.
\newblock Finiteness conditions for ${C}{W}$-complexes.
\newblock {\em Ann. of Math. (2)}, 81:56--69, 1965.

\bibitem{Wall(1966)}
C.~T.~C. Wall.
\newblock Finiteness conditions for ${C}{W}$ complexes. {I}{I}.
\newblock {\em Proc. Roy. Soc. Ser. A}, 295:129--139, 1966.

\bibitem{Wall(1967)}
C.~T.~C. Wall.
\newblock Poincar\'e complexes. {I}.
\newblock {\em Ann. of Math. (2)}, 86:213--245, 1967.

\bibitem{Wall(1999)}
C.~T.~C. Wall.
\newblock {\em Surgery on compact manifolds}.
\newblock American Mathematical Society, Providence, RI, second edition, 1999.
\newblock Edited and with a foreword by A. A. Ranicki.

\bibitem{West(1977)}
J.~E. West.
\newblock Mapping {H}ilbert cube manifolds to {A}{N}{R}'s: a solution of a
  conjecture of {B}orsuk.
\newblock {\em Ann. Math. (2)}, 106(1):1--18, 1977.

\bibitem{Whitehead(1978)}
G.~W. Whitehead.
\newblock {\em Elements of homotopy theory}, volume~61 of {\em Graduate Texts
  in Mathematics}.
\newblock Springer-Verlag, New York, 1978.

\bibitem{Zuk(2003)}
A.~{\.Z}uk.
\newblock Property ({T}) and {K}azhdan constants for discrete groups.
\newblock {\em Geom. Funct. Anal.}, 13(3):643--670, 2003.

\end{thebibliography}

\end{document}